%
%
%
%
\documentclass[12pt]{amsart}
\date{29 August 2018}      
\usepackage{hyperref}
\usepackage{latexsym,amsmath,amsfonts,amscd,amssymb,graphics}
\usepackage{bm}
\usepackage[a4paper,margin=1in]{geometry}
\usepackage[all]{xy}
\usepackage[utf8]{inputenc}

\theoremstyle{plain}  
\newtheorem{theorem}{Theorem}[section]
\newtheorem*{theorem*}{Theorem}

\newtheorem{corollary}[theorem]{Corollary}
\newtheorem{lemma}[theorem]{Lemma}
\newtheorem{proposition}[theorem]{Proposition}
\newtheorem{conjecture}[theorem]{Conjecture}

\theoremstyle{definition}
\newtheorem{definition}[theorem]{Definition}

\newtheorem{assumption}[theorem]{Assumption}

\theoremstyle{remark}

\newtheorem{remark}[theorem]{Remark}

\newtheorem*{claim*}{Claim}



\numberwithin{equation}{section}


\newcommand{\st}{\;|\;}

\renewcommand{\leq}{\leqslant}

\renewcommand{\geq}{\geqslant}

\renewcommand{\setminus}{\smallsetminus}

\newcommand{\into}{\hookrightarrow}

\newcommand{\R}{\mathbb{R}}

\newcommand{\Z}{\mathbb{Z}}
\newcommand{\C}{\mathbb{C}}

\newcommand{\cF}{\mathcal{F}}
\newcommand{\cM}{\mathcal{M}}
\newcommand{\cN}{\mathcal{N}}
\newcommand{\cO}{\mathcal{O}}

\newcommand{\cS}{\mathcal{S}}

\newcommand{\xra}{\xrightarrow}

\newcommand{\PGL}{\mathrm{PGL}}
\newcommand{\PSL}{\mathrm{PSL}}

\newcommand{\SL}{\mathrm{SL}}

\newcommand{\GL}{\mathrm{GL}}

\newcommand{\var}{\mathrm{var}}
\newcommand{\Gn}{\Gamma_n}

\newcommand{\pdeg}{\operatorname{pardeg}}
\newcommand{\pmu}{\operatorname{par\mu}}
\newcommand{\Hom}{\operatorname{Hom}}
\newcommand{\PH}{\operatorname{ParHom}}
\newcommand{\SPH}{\operatorname{SParHom}}
\newcommand{\PE}{\operatorname{ParEnd}}
\newcommand{\SPE}{\operatorname{SParEnd}}


\DeclareMathOperator{\Jac}{Jac} 

\DeclareMathOperator{\Ad}{Ad} 
\DeclareMathOperator{\Div}{Div}

\DeclareMathOperator{\divisor}{div}
\DeclareMathOperator{\Prym}{Prym}
\DeclareMathOperator{\Nm}{Nm}
\DeclareMathOperator{\Sym}{Sym}

\DeclareMathOperator{\tr}{tr}
\DeclareMathOperator{\rk}{rk}

\DeclareMathOperator{\End}{End}
 
\DeclareMathOperator{\Id}{Id}

\newcommand{\Pic}{\operatorname{Pic}}


\renewcommand{\a}{\alpha}

\newcommand{\s}{\sigma}

\newcommand{\p}{\phi}
\newcommand{\vp}{\varphi}




\begin{document}

\title[Topological Mirror Symmetry for Parabolic Higgs bundles]{Topological Mirror Symmetry for Parabolic Higgs bundles}

\author[P. B. Gothen]{Peter B. Gothen}
\address{}
\email{}

\author[A. G. Oliveira]{Andr\'e G. Oliveira}
\address{}
\email{}

\thanks{
This work was partially supported by CMUP (UID/MAT/00144/2013) and
the project PTDC/MAT-GEO/2823/2014 funded by FCT (Portugal) with
national funds.  
Second author was also partially supported by the Post-Doctoral fellowship SFRH/BPD/100996/2014 funded by FCT (Portugal) with national funds.
The authors
acknowledge support from U.S. National Science Foundation grants
DMS 1107452, 1107263, 1107367 "RNMS: GEometric structures And
Representation varieties" (the GEAR Network).
%
}

\subjclass[2010]{14H60 (Primary); 14H40, 14H70 (Secondary)}

\begin{abstract}
  We prove the topological mirror symmetry conjecture of
  Hausel--Thaddeus \cite{hausel-thaddeus:2001,hausel-thaddeus:2003}
  for the moduli space of strongly parabolic Higgs bundles of rank two and three, with full
  flags, for any generic weights. Although the main theorem is proved only for rank at most three, most of the results are proved for any prime rank.
\end{abstract}

\maketitle



\section{Introduction}


The Hitchin system is an algebraic completely integrable system. Since
it was introduced by Hitchin \cite{hitchin:1987,hitchin:1987b} thirty
years ago, it has been the subject of much interest, and it has turned
out to have profound connections with several other areas of
mathematics. Its basic ingredient is the moduli space
$\mathcal{M}_d(G)$ of $G$-Higgs bundles $(V,\vp)$ of fixed topological
type $d$ on a closed Riemann surface $X$ for a connected complex
reductive group $G$. Here $V$ is a holomorphic principal $G$-bundle on
$X$ and $\vp$ is a holomorphic $1$-form with values in $\Ad(V)$. This moduli space is a holomorphic symplectic
manifold carrying a hyper-K\"ahler metric. The integrable system is
given by the Hitchin map $h\colon\mathcal{M}_d(G)\to\mathcal{A}$, where
the Hitchin base $\mathcal{A}$ is an affine space whose dimension is
half that of $\mathcal{M}_d(G)$ and the components of $h$ are the
coefficients of the characteristic polynomial of $\vp$.

Mirror symmetry for the Hitchin system was introduced in the work of
Hausel and Thaddeus \cite{hausel-thaddeus:2003} (announced in
\cite{hausel-thaddeus:2001}). It involves the Hitchin systems for the
pair of Langlands dual groups $G=\SL(n,\C)$ and
${}^LG=\PGL(n,\C)$. Hausel and Thaddeus proved that the moduli spaces are
mirror partners in the sense of Strominger--Yau--Zaslow (SYZ)
\cite{strominger-yau-zaslow:1993}; since they considered the case when
$n$ and the degree $d=\deg(V)$ are coprime, this requires equipping the
moduli spaces with natural $B$-fields, or gerbes. Hausel and Thaddeus
have also shown that, in the cases $n=2,3$, the moduli spaces satisfy
topological mirror symmetry, which is an identity of suitably defined
stringy $E$-polynomials (these encode stringy Hodge numbers and again
involve the $B$-field). Moreover, they conjectured that this holds for any
$n$ and $d$ with $(n,d)=1$.

It was later proved by Donagi--Pantev \cite{donagi-pantev:2012} that, more
generally, SYZ mirror symmetry is satisfied by the moduli spaces (or
stacks) of $G$-Higgs bundles for any complex reductive group $G$. On
the other hand, a very recent preprint by Groechenig--Wyss--Ziegler
\cite{Groechenig-etal:2017} uses $p$-adic integration to prove
topological mirror symmetry in the case $G=\SL(n,\C)$ for any $n$ and
$d$ with $(n,d)=1$.

{Parabolic Higgs bundles} were introduced by Simpson
\cite{simpson:1990} as the natural objects to consider for extending
non-abelian Hodge theory to punctured Riemann surfaces. They are pairs
consisting of a parabolic bundle $V$, i.e., a vector bundle with
weighted flags in the fibers over fixed marked points in $X$, and a
Higgs field $\vp$ with values in the parabolic endomorphisms of $V$.

The theory of parabolic Higgs bundles is in many ways analogous to that
of usual Higgs bundles. In particular, there is a parabolic version of
the Hitchin system, which goes back to the work of Bottacin
\cite{bottacin:1995}, Markman \cite{markman:1994} and Nasatyr--Steer
\cite{nasatyr-steer:1995}.  Parabolic Higgs bundles have subsequently
been studied by many authors; we merely point to Boden--Yokogawa
\cite{boden-yokogawa:1996} and Logares--Martens
\cite{logares-martens:2010} as convenient references for parabolic
Higgs bundles and the parabolic Hitchin system.  We emphasize that in
this paper we consider exclusively \emph{strongly  parabolic} Higgs
bundles, meaning that the residue of the Higgs field at the marked
points is nilpotent. These provide the most immediate generalization
of the Hitchin system in that their moduli spaces are symplectic
leaves of more general (Poisson) moduli spaces of (non-strongly)
parabolic Higgs bundles.

In the announcement \cite{hausel-thaddeus:2001} Hausel and Thaddeus
also consider the parabolic case, and outline a proof that SYZ mirror
symmetry holds for any $n$.  Moreover, they state that topological
mirror symmetry holds for {parabolic} Higgs bundles in the case
$G=\SL(n,\C)$ with $n=2,3$, and conjecture that it should be true for
any $n$. The main result of the present paper is a proof of this
conjecture for $n=2,3$ (Theorem~\ref{thm:main1} below). For simplicity
we restrict ourselves to the case of full flags, though our
calculations of $E$-polynomials can in fact be carried through in the
general case.

Our proof follows the basic strategy of Hausel and Thaddeus. It rests
on the observation that it suffices to prove that certain
contributions on each side are identical in order to conclude that the
full stringy $E$-polynomials coincide for the $\SL(n,\C)$ and
$\PGL(n,\C)$ moduli spaces.  On the $\PGL(n,\C)$-moduli space, the
relevant contribution to the stringy $E$-polynomial comes from the
fixed loci in the moduli space under the natural action of non-trivial
elements of the group $\Gamma_n$ of
$n$-torsion points of $\Pic^0(X)$. On the $\SL(n,\C)$-moduli space, the
relevant contribution
is the part of the $E$-polynomial which is \emph{not invariant} under
the action of $\Gamma_n$, also known as the \emph{variant part}, and which is determined by the variant part of the $E$-polynomial of certain fixed point subvarieties under the natural $\C^*$-action.

The description of the fixed loci of elements of $\Gamma_n$ is broadly
parallel to that of \cite{hausel-thaddeus:2003} and essentially rests
on the work of Narasimhan--Ramanan \cite{narasimhan-ramanan:1975}.
The result is that the fixed point loci are described in terms of Prym
varieties of unramified covers of $X$ modulo the action of the Galois
group. However, in the parabolic situation, it turns out that 
this action can be absorbed in the parabolic data, and this simplifies
the arguments somewhat compared to the non-parabolic situation. 

On the other hand, the fixed points of the $\C^*$-action are so-called
Hodge bundles. These are Higgs bundles whose underlying vector bundle
has a direct sum decomposition $V=V_1\oplus\dots\oplus V_l$ with
respect to which the Higgs field $\vp$ has weight one. For rank
$n=2,3$, it is known that only fixed loci consisting of Hodge bundles
whose summands are all line bundles contribute to the variant
$E$-polynomial, but the corresponding result for higher prime rank ---
that only $\C^*$-fixed loci of type $(1,1,\dots,1)$ contribute to the
variant $E$-polynomial --- is not known to be true.  This is the only
missing step for generalising our proof to any prime rank $n$, since
our calculations are done for every such $n$ (this is completely
analogous to the non-parabolic case as treated in
\cite{hausel-thaddeus:2003}).

It turns out that the $B$-field does not play a very prominent role in
the parabolic situation. Indeed, for SYZ mirror symmetry to hold in
the strict sense, i.e., without a $B$-field, it is required that there
be a Lagrangian section of the fibration, providing the natural base
points of the abelian varieties which are the fibers of the integrable
system. This is provided by the parabolic version of the Hitchin
section. There is an isomorphism between moduli spaces of parabolic
Higgs bundles for any two different degrees (requiring an adjustment
of the weights), as long as at least one of the flags are full. Using
this isomorphism, one proves that the Hitchin section --- thus the
required Lagrangian section --- exists for any degree; cf.\
Theorem~\ref{thm:SYZmirror}. One thus also expects the usual (i.e.\
without the $B$-field correction) stringy $E$-polynomials to agree,
and this is indeed what we show to be the case.  On the other hand
Biswas--Dey \cite{biswas-dey:2012} have proved that the moduli spaces
of parabolic Higgs bundles satisfy SYZ mirror symmetry also when
equipped with the natural $B$-fields (analogously to the non-parabolic
case considered in \cite{hausel-thaddeus:2003}).  Thus one would
expect topological mirror symmetry to hold also for the $B$-field
twisted stringy $E$-polynomials and, indeed, calculations analogous to
those carried out here and in \cite{hausel-thaddeus:2003} indicate
that this is the case.

We assume throughout that $g\geq 2$. However, we note that for
$g=0$ the $E$-polynomials trivially agree since the moduli spaces are
identical. For $g=1$ we have chosen not to include the calculation of
 $E$-polynomials, both in order to avoid considering special cases and also
because on the SYZ side mirror symmetry is currently known only for
$g\geq2$ (since otherwise the generic fibre of the Hitchin map may be
singular, see Section~\ref{subsec:Hitmap}).


Here is an outline of the contents of the
paper. Section~\ref{sec:parabolic-higgs-bundles-and-moduli} reviews
basic facts about parabolic Higgs bundles and their moduli. We also
recall how in the parabolic setting moduli spaces for different
degrees $d$ are isomorphic (with a change in parabolic weights). In
Section~\ref{sec:mirror-symmetry} we recall the SYZ mirror symmetry
result for the parabolic Hitchin system, review the stringy
$E$-polynomials, describe the topological mirror symmetry
conjecture of Hausel--Thaddeus, and state and prove our result. Section~\ref{sec:(11...1)} is devoted
to the calculation of the contribution to the variant part of the $E$-polynomial of the
$\SL(n,\C)$-moduli space arising only from the $\C^*$-fixed point loci of type $(1,1,\ldots,1)$. In
Section~\ref{sec:unramified-Norm,Prym,actions} we recall some
classical results on Prym varieties of unramified covers. These are
used in Section~\ref{sec:stringy E-pol}, where the contribution from
the fixed point loci of non-trivial elements of $\Gamma_n$ to the stringy
$E$-polynomial of $\PGL(n,\C)$-moduli space is calculated.

\subsection*{Acknowledgments}

We thank David Alfaya, Emilio Franco, Laura Fredrickson, Oscar García-Prada, Tomas Gómez,
Tamás Hausel, Jochen Heinloth and Ana Pe\'on-Nieto for useful discussions.


\section{Parabolic Higgs bundles and their moduli}
\label{sec:parabolic-higgs-bundles-and-moduli}

In this section we recall basic facts about parabolic Higgs bundles
and their moduli spaces.

\subsection{Parabolic vector bundles}

Denote by $X$ a smooth projective curve over $\C$, and mark it with
distinct points labeled by the divisor \[D=p_1+\cdots+p_{|D|},\]
with $p_i\neq p_j$ for $i\neq j$ and where $|D|=\deg{D}$. Let $g$ be
the genus of $X$ and assume $g\geq 2$.  This data will be fixed
throughout.

Parabolic vector bundles on $X$ associated to $D$, are vector bundles
together with extra structure over each point of $D$.

\begin{definition}\label{def:parabvb}
  A holomorphic \emph{parabolic vector bundle of rank $n$ on $X$,
    associated to the divisor $D$}, is a holomorphic vector bundle $V$
  of rank $n$ over $X$, endowed with a \emph{parabolic structure along
    $D$}. By this is meant a collection of weighted flags of the
  fibers of $V$ over each point $p\in D$:
\begin{equation}\label{eq:parabstructure1}
\begin{split}
V_p=V_{p,1} \supsetneq V_{p,2} \supsetneq & \cdots \supsetneq V_{p,s_p}\supsetneq V_{p,s_p+1}=\{0\},\\
0\leq \a_1(p) < & \dots <\a_{s_p}(p) < 1,
\end{split}
\end{equation}
where $s_p$ is an integer between $1$ and $n$. 
The real number $\alpha_i(p)\in[0,1)$ is \emph{the weight of the
  subspace $V_{p,i}$}. The \emph{multiplicity of the weight $\a_i(p)$}
is the number $m_i(p)=\dim(V_{p,i}/V_{p,i+1})$, thus
$\sum_im_i(p)=n$.  The data given only by the flags over $D$ (i.e.,
without the weights) is called the \emph{quasi-parabolic structure} of
$V$. The parabolic structure is obtained from a quasi-parabolic
structure by specifying the weights.
The \emph{type of the quasi-parabolic structure} is $\bm m$, where
$\bm m=(m_1(p),\ldots,m_{s_p}(p))_{p\in D}$ is the collection of all
multiplicities over all points of $D$.  The \emph{type of the
  parabolic structure} is $(\bm m,\bm\a)$, with
$\bm\a=(\a_1(p),\ldots,a_{s_p}(p))_{p\in D}$ being the collection of
all weights.  The \emph{type} of the parabolic vector bundle is
$(n,d,\bm m,\bm\a)$, where $d=\deg(V)$ is its degree.
Finally, a flag over a point $p\in D$ is \emph{full} if $s_p=n$ or,
equivalently, $m_i(p)=1$ for all $i$.
\end{definition}

We shall denote a parabolic vector bundle by just $V$ whenever the
parabolic structure is clear from the context.

 
\begin{remark}\label{rmk:parabtensorusual}
  Given a parabolic vector bundle $V$, with parabolic structure of
  type $(\bm m,\bm\a)$, and a line bundle $L$ the tensor product
  $V\otimes L$ acquires a parabolic structure, of the same type
  $(\bm m,\bm\a)$, in the obvious way, i.e., by taking the flags on
  $V\otimes L$ along $D$ induced by the ones of $V$, with the same
  weights. Except when explicitly mentioned to the contrary, this will
  be the parabolic structure we shall consider on $V\otimes L$.
  In fact, it corresponds to the general tensor product of parabolic
  bundles (see Yokogawa \cite{yokogawa:1995}) in the particular case
  where $L$ has trivial parabolic structure.
\end{remark}
 
Next we come to morphisms of parabolic bundles. These will be the
vector bundle homomorphisms which preserve the parabolic
structures; however these can be preserved in a week or a strong sense.

\begin{definition}
  Let $V$ and $W$ be parabolic vector bundles whose parabolic
  structures are of type $(\bm m,\bm\a)$ and $(\bm l,\bm\beta)$
  respectively, and let $\p: V\to W$ be a holomorphic map.  The map $\p$
  is called \emph{parabolic} if we have, for all $p\in D$,
\[\a_i(p) >\beta_j(p)\Longrightarrow \p(V_{p,i}) \subseteq W_{p,j+1}.\]
Denote by $\PH(V,W)$ the bundle of parabolic homomorphisms from $V$ to
$W$ and, if $W=V$, write $\PE(V)$ instead.  The map $\p$ is said
\emph{strongly parabolic} if
\[\a_i(p)\geq \beta_j(p)\Longrightarrow \p(V_{p,i}) \subseteq W_{p,j+1},\]
for all $p\in D$. Denote by $\SPH(V,W)$ the bundle of strongly
parabolic homomorphisms from $V$ to $W$ and, if $W=V$, write $\SPE(V)$
instead.
\end{definition}

\subsection{Parabolic Higgs bundles}
We shall need to consider parabolic Higgs bundles with various
structure groups $G$. Indeed, there is a theory of parabolic $G$-Higgs
bundles (see, for example, Biquard--García-Prada--Mundet
\cite{biquard-garcia-prada-mundet:2015} for a general notion for real
reductive $G$) but, since we shall only need the groups
$\GL(n,\C)$, $\SL(n,\C)$ and $\PGL(n,\C)$, we can make the
following ad hoc definitions.

Let $K=\Omega_X^1$ be the canonical bundle on $X$ and write
$K(D)=K\otimes\cO_X(D)$.

\begin{definition}
  A {\em strongly parabolic $\GL(n,\C)$-Higgs bundle} is a pair
  $(V,\vp)$, where $V$ is a parabolic bundle of rank $n$ and the
  \emph{Higgs field} $\vp:V\to V\otimes K(D)$ is a strongly parabolic
  homomorphism, i.e., $\vp$ is a holomorphic section of $\SPE(V)\otimes K(D)$,
  where $V\otimes K(D)$ has the parabolic structure defined by $V$
  (cf.\ Remark \ref{rmk:parabtensorusual}).
The \emph{type} of a parabolic $\GL(n,\C)$-Higgs bundle $(V,\vp)$ is
the type of the parabolic vector bundle $V$; cf. Definition
\ref{def:parabvb}.
\end{definition}

Thus, in a strongly parabolic Higgs bundle $(V,\vp)$, the Higgs field
$\vp$ is a meromorphic endomorphism valued one-form with at most
simple poles along $p\in D$ and whose residue at $p$ is nilpotent with
respect to the flag. In other words, if the parabolic structure on $V$
is given by (\ref{eq:parabstructure1}) then,
\begin{displaymath}
  \vp(V_{p,i})\subseteq V_{p,i+1}\otimes K(D)_p.
\end{displaymath}

\begin{remark}
  If we require $\vp$ to be just parabolic, rather than strongly
  parabolic, we get the notion of parabolic Higgs bundle (for the
  structure groups considered). We shall, however, never use this notion
  in the present paper. Thus we shall frequently omit the adverb
  ``strongly'', but the reader should keep in mind that the Higgs
  field $\vp$ is always required to be strongly parabolic.
\end{remark}

 
If $V$ is a parabolic bundle of rank $n$, consider the determinant
line bundle $\Lambda^nV$. Though this has a natural parabolic
structure, in the following definition we ignore it and consider
just the underlying line bundle.
 
\begin{definition}
  Fix a holomorphic line bundle $\Lambda$ on $X$ of degree $d\in\Z$. A
  {\em strongly parabolic $\SL(n,\C)$-Higgs bundle with fixed
    determinant $\Lambda$} is a pair $(V,\vp)$, where $V$ is a
  parabolic bundle of rank $n$ such that $\Lambda^nV\cong\Lambda$, and
  where $\vp\in H^0(X,\SPE(V)\otimes K(D))$ is such that
  $\tr(\vp)\equiv 0$.
\end{definition}

Note that, strictly speaking, ``$\SL(n,\C)$-bundle'' should only refer
to the case where the line bundle $\Lambda$ is trivial, so we are
committing a slight abuse of language here. 

When there is no need to specify the structure group, or when it is
clear from the context, we shall often make a further innocuous abuse
of language and say simply (strongly) parabolic Higgs bundle.

In order to introduce strongly parabolic $\PGL(n,\C)$-Higgs bundles,
recall that any holomorphic $\PGL(n,\C)$-bundle over the curve $X$
lifts to a holomorphic vector bundle $V\to X$, and that two such lifts
$V$ and $V'$ differ by tensoring by a line bundle.

\begin{definition}
  \label{def:PGL-PHB}
  A {\em strongly parabolic $\PGL(n,\C)$-Higgs bundle} is an
  equivalence class $[(V,\vp)]$ of strongly parabolic
  $\GL(n,\C)$-Higgs bundles, where $(V,\vp)$ and $(V',\vp')$ are
  considered equivalent if there is a line bundle $L$ such that
  $V'\cong V\otimes L$, the parabolic structure of $V'$ is the one
  obtained from $V$, and $\vp'=\vp\otimes\Id_L$.
\end{definition}

\begin{remark}
  \label{rem:PGL-fixed-determinant}
  Recall that $\PGL(n,\C)$-bundles over the curve $X$ are topologically classified by
  $\pi_1(\PGL(n,\C)) \cong \Z_n$.
  Fixing a topological type
  $c\in\Z_n$ and a holomorphic line bundle $\Lambda$ whose degree
  modulo $n$ equals $c$, any holomorphic $\PGL(n,\C)$-bundle of
  topological type $c$ may be lifted to a holomorphic vector bundle
  whose determinant bundle is isomorphic to $\Lambda$.  Moreover, two
  lifts with the same determinant bundle differ by tensoring by a line
  bundle which is a $n$-torsion point of the Jacobian $\Jac(X)$. These
  facts reflect the identifications
  \(\PGL(n,\C)=\GL(n,\C)/\C^*=\SL(n,\C)/\Z_n=\PSL(n,\C).\)
\end{remark}

\subsection{Stability and moduli spaces}
In the following we recall the stability condition for parabolic Higgs
bundles and introduce their moduli spaces.  

\begin{definition}
  Given a parabolic vector bundle $V$, a \emph{parabolic subbundle} is
  a vector subbundle $V'\subseteq V$, with the parabolic structure
  defined as follows. For each $p\in D$, the quasi-parabolic structure
  is given by the flag
  \[V'_p=V'_{p,1} \supsetneq V'_{p,2} \supsetneq \cdots \supsetneq
  V'_{p,s'_p}\supsetneq \{0\},\]
  where $V'_{p,i}=V'_p\cap V_{p,i}$, discarding all the repetitions of
  subspaces in the filtration.
Moreover, the weights $0\leq \a'_1(p)< \cdots <\a'_{s'_p}(p) < 1$ are
taken to be the greatest possible among the corresponding original
weights, meaning that
\begin{equation}\label{eq:weight-subbundle}
  \a'_i(p)=\max_j\{\a_j(p)\st V_{p,j}\cap V'_p=V'_{p,i}\}
  =\max_j\{\a_j(p)\st V'_{p,i}\subseteq V_{p,j}\}.
\end{equation}
In other words, the weight attached to $V'_{p,i}$ is the weight
$\a_j(p)$ whose index $j$ is such that $V'_{p,i}\subseteq V_{p,j}$ but
$V'_{p,i}\not\subseteq V_{p,j+1}$.
\end{definition}

\begin{definition}
  The \emph{degree} of a parabolic Higgs bundle $(V,\vp)$
  is the degree of the underlying bundle, $\deg(V)\in\Z$.  The
  \emph{parabolic degree} $\pdeg(V)$ and \emph{parabolic slope}
  $\pmu(V)$ of $(V,\vp)$ are the parabolic degree and slope,
  respectively, of the underlying parabolic vector bundle, defined by
  \[\pdeg(V)=\deg(V)+\sum_{p\in D} \sum_{i=1}^{s_p}m_i(p)\a_i(p)\qquad
  \text{and}\qquad \pmu (V)=\frac{\pdeg (V)}{n}.\]
\end{definition}

\begin{definition}\label{def:stabcond}
  A strongly parabolic Higgs bundle $(V,\vp)$ is \emph{semistable} if
  \[\pmu(V')\leq\pmu(V)\]
  for every non-zero parabolic subbundle $V'\subseteq V$ which is
  \emph{$\vp$-invariant}, that is, $\vp(V')\subseteq V'\otimes K(D)$.
  It is \emph{stable} if it is semistable and strict inequality holds
  above for all proper non-zero $\vp$-invariant parabolic subbundles
  $V'\subseteq V$.
\end{definition}

Consider now quasi parabolic Higgs bundles of rank $n$, degree $d$,
and quasi-parabolic type $\bm m$.  The space of compatible parabolic
weights $\bm{\a}=(\a_1(p),\ldots,\a_{s_p}(p))_{p\in D}$ is a product
$\mathcal{S}$ of simplices (excluding some boundaries) determined by
the inequalities in (\ref{eq:parabstructure1}), one simplex for each
point of $D$.  Let $(V,\vp)$ be a parabolic Higgs bundle of type
$(\bm{m},\bm\alpha)$. If $(V,\vp)$ is semistable but not stable, then
\begin{equation}\label{eq:non-genweights}
  n\bigg(d'+\sum_{p\in D} \sum_{i=1}^{s'_p} m'_i(p)\a_i'(p)\bigg)=n'\bigg(d+\sum_{p\in D} \sum_{i=1}^{s_p}m_i(p)\a_i(p)\bigg),
\end{equation} 
where $d'$ and $n'$ are the degree and rank of a destabilizing
parabolic Higgs subbundle $(V',\vp|_{V'})$ with multiplicities
$m'_i(p)$ and weights $\a'_i(p)$. For given $d'$, $n'$ and $m'_i(p)$, equation \eqref{eq:non-genweights}
determines an intersection of a hyperplane with $\mathcal{S}$ which is
called a \emph{wall}.  There are only finitely many possible values for
$n'$ and $m'_i(p)$ and, for each of these, there are also finitely
many values of $d'$ for which these hyperplanes intersect
$\mathcal{S}$. Hence there are finitely many walls.

\begin{definition}
  Fix $n$, $d$ and a quasi-parabolic type $\bm m$. A weight vector
  $\bm\alpha\in\mathcal{S}$ is called \emph{generic} if it does not
  belong to a wall. A connected component of the complement of the set
  of walls is called a \emph{chamber}.
\end{definition}

\begin{remark}
  It is immediate from this definition that for generic weights, a
  semistable parabolic Higgs bundle is in fact stable. Moreover, for
  generic weights in the same chamber, the stability condition
  is unchanged, so the corresponding moduli spaces (to be
  introduced presently) will be isomorphic.
\end{remark}

A GIT construction of the moduli space $\cM_d^{\bm m,\bm\a}(\GL(n,\C))$ of semistable parabolic
$\GL(n,\C)$-Higgs bundles over $X$, of rank $n$, degree $d$ and parabolic type
$(\bm{m},\bm\a)$, was carried out by Yokogawa \cite{yokogawa:1993}, and
the deformation theory of parabolic Higgs bundles was also studied by
Yokogawa \cite{yokogawa:1995} (cf.\ Boden--Yokogawa
\cite{boden-yokogawa:1996}). A gauge theoretic construction of the
moduli space of (non-strongly) parabolic Higgs bundles was done by
Konno \cite{konno:1993}. It was proved by Yokogawa that the stable
locus of the moduli space is smooth and quasi-projective. Thus we have
the following result.

\begin{proposition}
  Assume that the weights $\bm\alpha$ are generic. Then the moduli
  space 
  $\cM_d^{\bm m,\bm\a}(\GL(n,\C))$ is a smooth quasi-projective
  variety. Moreover, for generic weights in the same chamber of
  $\mathcal{S}$, the corresponding moduli spaces are isomorphic. \qed
\end{proposition}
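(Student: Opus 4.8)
The plan is to combine the cited results of Yokogawa with the elementary wall-crossing observation already recorded in the preceding remark. First I would use genericity to identify the semistable moduli space with its stable locus: by the definition of a wall, a strictly semistable parabolic Higgs bundle of type $(\bm m,\bm\alpha)$ forces the equality \eqref{eq:non-genweights} for some destabilizing data $(n',d',m'_i(p))$, hence places $\bm\alpha$ on a wall. Since $\bm\alpha$ is generic it avoids all walls, so every semistable object is stable and $\cM_d^{\bm m,\bm\a}(\GL(n,\C))$ coincides with the locus of stable points.

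Quasi-projectivity is then immediate from Yokogawa's GIT construction \cite{yokogawa:1993}: the moduli space is realized as a GIT quotient and the stable locus is an open, hence quasi-projective, subscheme of the semistable quotient. For smoothness I would appeal to the deformation theory of \cite{yokogawa:1995,boden-yokogawa:1996}, under which the deformations and obstructions of $(V,\vp)$ are governed by the hypercohomology of the two-term complex $\PE(V)\xrightarrow{[\,\cdot\,,\vp]}\SPE(V)\otimes K(D)$. At a stable point the zeroth hypercohomology consists only of the scalar endomorphisms, and the parabolic Serre-duality isomorphism $\SPE(V)\otimes\cO(D)\cong\PE(V)^\vee$ identifies the second hypercohomology with the dual of the zeroth; its traceless part vanishes by stability, so the obstruction space is zero and every such point is smooth of the expected dimension. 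By the first step this accounts for all of $\cM_d^{\bm m,\bm\a}(\GL(n,\C))$.

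For the second assertion the key point is that the stability condition is literally unchanged as $\bm\alpha$ moves within a fixed chamber. Fixing a destabilizing type, i.e.\ a choice of $n'$, $d'$ and multiplicities $m'_i(p)$, the difference $\pmu(V')-\pmu(V)$ for the corresponding $\vp$-invariant subbundles is an affine function of $\bm\alpha\in\mathcal{S}$ whose vanishing locus is exactly the associated wall \eqref{eq:non-genweights}. There are only finitely many such affine functions, and on each connected component of the complement of the walls none of them changes sign. Hence a parabolic Higgs bundle of type $(\bm m,\bm\alpha)$ is stable for one weight in the chamber precisely when it is stable for any other weight in the same chamber; the stability conditions agree, the moduli functors coincide, and the resulting moduli spaces are canonically isomorphic.

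The entire substantive input here is Yokogawa's construction and deformation theory, which I am content to quote; the only genuinely self-contained step is the wall-crossing argument, whose sole subtlety is verifying that the finitely many sign conditions defining stability are constant on chambers. This is immediate once one observes that each wall is, by construction, precisely the zero set of one of these conditions, so I anticipate no real obstacle beyond carefully enumerating the finite set of destabilizing types.
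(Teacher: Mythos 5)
Your proposal is correct and follows essentially the same route as the paper: the paper gives no independent proof but simply quotes Yokogawa's GIT construction and deformation theory for quasi-projectivity and smoothness of the stable locus, combined with the observation (recorded in the remark preceding the proposition) that generic weights force semistable~$=$~stable and that the stability condition, being a finite collection of sign conditions whose zero loci are exactly the walls, is constant on each chamber. Your fleshing out of the deformation-theoretic smoothness argument and of the wall-crossing step is accurate and adds detail the paper leaves implicit, but it is not a different method.
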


In order to obtain the moduli space of parabolic $\SL(n,\C)$-Higgs
bundles over $X$, consider the determinant map
\begin{align*}
  p:\cM_d^{\bm m,\bm\a}(\GL(n,\C))
    &\to T^*\Pic^d(X)\cong\Pic^d(X)\times H^0(X,K),\\
  (V,\vp)&\mapsto (\Lambda^nV,\tr(\vp)),
\end{align*}
with $\Pic^d(X)$ the component of the Picard variety of $X$ of degree
$d$ line bundles.  Notice that, since $(V,\vp)$ is strongly parabolic,
the residue of the trace $\tr(\vp)$ vanishes along $D$, and so it is
in fact a section of $K$.  
Let
\[
\cM_\Lambda^{\bm m,\bm\a}(\SL(n,\C))=p^{-1}(\Lambda,0).
\]
For generic weights, this is again a smooth quasi-projective variety.
Since this is the moduli space we shall mostly be working with, whenever
there is no risk of confusion, we shall denote it simply by $\cM$.

Next we want to introduce the moduli space of parabolic
$\PGL(n,\C)$-Higgs bundles. In view of Definition~\ref{def:PGL-PHB}
and Remark~\ref{rem:PGL-fixed-determinant} we consider the group 
\[\Gn=\Jac_{[n]}(X)=\{L\in\Jac(X)\st L^n\cong\mathcal
O_X\}\subset\Jac(X)\]
of $n$-torsion points of the Jacobian of $X$. Recall that
$\Gn\cong H^1(X,\Z_n)\cong\Z_n^{2g}$.  It will be convenient to
distinguish the elements of $\Gn$ as an abstract group and as line
bundles; thus, if $\gamma$ denotes an element of $\Gn$, the
corresponding line bundle will be denoted by $L_\gamma$. Fix a line
bundle $\Lambda$
and let $[d]\in\Z_n$ denote the reduction of $d=\deg(\Lambda)$ modulo
$n$. The group
$\Gamma_n$ acts on $\mathcal{M}$ by
\begin{equation}\label{eq:Gamma_n action}
\gamma\cdot (V,\vp)=(V\otimes L_\gamma,\vp\otimes\Id_{L_\gamma})
\end{equation}
(note that $\Gn$ acts trivially on the parabolic structure).
We take the moduli space of
parabolic $\PGL(n,\C)$-Higgs bundles of topological type $[d]$ to be 
\[\cM_{[d]}^{\bm m,\bm\a}(\PGL(n,\C))=\cM/\Gamma_n.\] 
We remark that this is consistent with the abstract definition of
stability of parabolic $G$-Higgs bundles coming from
\cite{biquard-garcia-prada-mundet:2015}.  As opposed to $\cM$, the
moduli space $\cM/\Gn$ is not smooth, but rather an orbifold, with
singularities arising from the fixed points of the action of
$\Gn$. 

Serre duality for a parabolic vector bundles (see
\cite{yokogawa:1995,boden-yokogawa:1996}) says that for a parabolic
vector bundle $V$ 
\begin{displaymath}
  H^1(\PE(V)) \simeq H^0(\SPE(V)\otimes K(D))^*
\end{displaymath}
(and analogously in the traceless case), in other words, the
infinitesimal deformation space of $V$ is dual to the space of
Higgs fields on $V$. Thus, letting $\cN$ denote the moduli space of
parabolic vector bundles (with fixed determinant $\Lambda$), there is
an embedding of the cotangent bundle $T^*\cN\into\cM$ as an open
subset. The natural symplectic structure on the cotangent bundle
extends to $\cM$, which is thus a holomorphic symplectic manifold (see
Bottacin \cite[Sec.~5]{bottacin:1995}, Biswas--Ramanan
\cite[Sec.~6]{biswas-ramanan:1994}, Konno \cite{konno:1993}, 
Logares--Martens \cite{logares-martens:2010}, and cf.\ Yokogawa
\cite{yokogawa:1995} and Boden--Yokogawa
\cite{boden-yokogawa:1996}). Moreover, Konno's gauge theoretic
construction (loc.\ cit.) shows that $\mathcal{M}$ has a compatible
hyper-K\"ahler metric.




\subsection{Isomorphism between moduli spaces for different degrees and weights}

Let $\Lambda$ and $\Lambda'$ be line bundles on $X$, not necessarily
of the same degree. In this section we prove that, under mild
conditions on the parabolic structure $\bm\a$, one can find a
parabolic structure $\bm\a'$ so that the moduli spaces
$\cM_\Lambda^{\bm\a}(\SL(n,\C))$ and
$\cM_{\Lambda'}^{\bm\a'}(\SL(n,\C))$ are isomorphic. This expands on
\cite[Proposition~2.1]{garcia-prada-gothen-munoz:2007} and marks a
substantial difference to the non-parabolic case, where such an
isomorphism can only exist if $\deg(\Lambda)$ and $\deg(\Lambda')$ are
equal modulo $n$.

We need the notion of tensor product of parabolic (Higgs)
bundles. This is better viewed in the more general context of
parabolic (or filtered) sheaves (see Boden--Yokogawa
\cite{boden-yokogawa:1996}, Yokogawa \cite{yokogawa:1995}, and Simpson
\cite{simpson:1990}) but we shall only need a few simple facts which
we now review. In fact it suffices for us to consider the case when
one of the bundles is a parabolic line bundle (with trivial Higgs
field), so let $(V,\bm\alpha)$ be a parabolic vector bundle bundle of
rank $n$ and let $(L,\bm\beta)$ be a parabolic line bundle on $X$. There
is a \emph{parabolic tensor product} of the parabolic bundles $V$ and
$L$, denoted by $(V\otimes ^P L,\bm\alpha')$. The parabolic weights
$\bm\alpha'$ are given by
\begin{equation}
  \label{eq:p-tensor-weights}
  \alpha_i'(p) =
  \begin{cases}
    \alpha_i(p) + \beta(p) &\text{if $\alpha_i(p) + \beta(p)<1$},\\
    \alpha_i(p) + \beta(p)-1 &\text{if $\alpha_i(p) + \beta(p)\geq1$},
  \end{cases}
\end{equation}
where for each $p\in D$ the correct ordering of the $ \alpha_i'(p)$ by size
corresponds to a cyclic permutation of the ordering of the
indices $i=1,\dots,s_p$. The multiplicity of the weight $\alpha_i'(p)$ is
$m_i(p)$. The parabolic degree of the parabolic tensor product is
given by the usual formula:
\begin{align*}
  \pdeg(V\otimes^P L) &= \pdeg(V) + n\pdeg(L)\\
  &= \deg(V)+\sum_{p,i}m_i(p)\alpha_i(p)+n\Big(\deg(L)+\sum_p\beta(p)\Big).
\end{align*}
In view of this we get the following formula for the (non-parabolic) degree of
$V\otimes ^P L$:
\begin{equation}
\label{eq:usual-deg-VL}
\begin{aligned}
  \deg(V\otimes^P L) &= \pdeg(V\otimes^P L) - \sum_{p,i}m_i(p)\alpha_i'(p)\\
  &= \deg(V) + n\deg(L)
    + \sum_{p,i}m_i(p)(\alpha_i(p)+\beta(p)-\alpha_i'(p)).
\end{aligned}
\end{equation}
Finally we remark that if $V$ underlies a parabolic Higgs bundle
$(V,\vp)$, then $V\otimes^P L$ has a Higgs field induced by
$\vp\otimes\Id_L$ and that $(V,\vp)$ is stable if and only if $(V\otimes^P
L,\vp\otimes\Id_L)$ is (see Simpson \cite{simpson:1990}).
\begin{theorem}
  Let $\bm\alpha=(\alpha_i(p))_{p\in D}$ be a system of parabolic
  weights and let $\Lambda$ and $\Lambda'$ be line bundles of degrees
  $d$ and $d'$, respectively.  Suppose there are 
  $\bm\beta=(\beta(p))_{p\in D}$ in $[0,1)$ such that
\begin{equation}\label{eq:cond-isomorphism-parabolicmodulispaces}
d'-d-\sum_{p,i}m_i(p)(\alpha_i(p)+\beta(p)-\alpha_i'(p))\equiv 0\ \pmod{n}.
\end{equation} 
Then there is a parabolic line bundle $(L,\bm\beta)$ inducing an
isomorphism
\begin{align*}
  \cM_\Lambda^{\bm\a}(\SL(n,\C)) &\xra{\cong}
                                   \cM_{\Lambda'}^{\bm\a'}(\SL(n,\C)),\\
  (V,\vp) &\mapsto (V\otimes^P L,\vp\otimes\Id_L),
\end{align*}
where the weights $\bm\alpha'$ are given by \eqref{eq:p-tensor-weights}.
\end{theorem}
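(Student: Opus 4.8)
The plan is to realise the claimed isomorphism by the parabolic tensor product functor $(V,\vp)\mapsto(V\otimes^P L,\vp\otimes\Id_L)$ already described before the statement, choosing the underlying holomorphic line bundle $L$ so that the determinant comes out correctly while keeping the prescribed weights $\bm\beta$. By the facts recalled above, this functor sends the weights $\bm\a$ to $\bm\a'$ given by \eqref{eq:p-tensor-weights}, preserves stability, and alters the underlying degree according to \eqref{eq:usual-deg-VL}; moreover $\tr(\vp\otimes\Id_L)=\tr(\vp)$, so the traceless condition is preserved. The only real content is therefore to pin down $L$ so that $\Lambda^n(V\otimes^P L)\cong\Lambda'$ as an ordinary line bundle.

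First I would compute the underlying determinant of the parabolic tensor product. For each $p\in D$ set $k(p)=\sum_i m_i(p)$, the sum taken over those $i$ with $\alpha_i(p)+\beta(p)\geq 1$; by \eqref{eq:p-tensor-weights} this equals $\sum_i m_i(p)(\alpha_i(p)+\beta(p)-\alpha_i'(p))$, the local number (with multiplicity) of weights that wrap around. Normalising each such weight back into $[0,1)$ amounts to an elementary (Hecke) modification of $V\otimes L$ at $p$ along the subspace of weight $\geq 1-\beta(p)$, which has dimension $k(p)$ and raises the degree by $k(p)$; tracking the determinant through the length-$k(p)$ torsion quotient at each $p$ yields
\begin{equation*}
\Lambda^n(V\otimes^P L)\cong\Lambda\otimes L^{n}\otimes\cO_X\Big(\textstyle\sum_{p\in D}k(p)\,p\Big).
\end{equation*}
As a consistency check, the degree of the right-hand side agrees with \eqref{eq:usual-deg-VL}.

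Next I would solve for $L$. Setting the displayed determinant equal to $\Lambda'$ requires $L^{n}\cong M$, where
\begin{equation*}
M:=\Lambda'\otimes\Lambda^{-1}\otimes\cO_X\Big(-\textstyle\sum_{p\in D}k(p)\,p\Big),\qquad \deg M=d'-d-\textstyle\sum_{p,i}m_i(p)(\alpha_i(p)+\beta(p)-\alpha_i'(p)).
\end{equation*}
Hypothesis \eqref{eq:cond-isomorphism-parabolicmodulispaces} says precisely that $\deg M\equiv 0\pmod n$, so $M$ admits an $n$-th root; I fix one, call it $L$, and equip it with the given weights $\bm\beta$. With this choice the functor carries $\cM_\Lambda^{\bm\a}(\SL(n,\C))=p^{-1}(\Lambda,0)$ into $\cM_{\Lambda'}^{\bm\a'}(\SL(n,\C))=p^{-1}(\Lambda',0)$.

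Finally I would check that this is an isomorphism of varieties. The parabolic line bundles with weights in $[0,1)$ form a group under $\otimes^P$, with identity $(\cO_X,\bm 0)$, so $(L,\bm\beta)$ has a parabolic inverse $(L',\bm\beta')$, where $\beta'(p)\equiv-\beta(p)\pmod 1$ and $L'\cong L^{-1}\otimes\cO_X(-\sum_{\beta(p)\neq 0}p)$; tensoring by $(L',\bm\beta')$ is inverse to tensoring by $(L,\bm\beta)$ because $\otimes^P$ is associative and $(L,\bm\beta)\otimes^P(L',\bm\beta')\cong(\cO_X,\bm 0)$. Since both operations are algebraic and preserve stability, they descend to mutually inverse morphisms between the two smooth quasi-projective moduli spaces, giving the desired isomorphism. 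The main obstacle is the determinant bookkeeping in the first two steps: one must verify that normalising the wrapped weights contributes exactly the divisor $\sum_p k(p)\,p$ to the underlying determinant, so that the degree condition for the existence of the $n$-th root $L$ coincides on the nose with \eqref{eq:cond-isomorphism-parabolicmodulispaces}. Everything else is formal functoriality of the parabolic tensor product.
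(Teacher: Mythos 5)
Your proof is correct and follows essentially the same route as the paper's: realise the map by the parabolic tensor product with a parabolic line bundle $(L,\bm\beta)$, use the degree formula \eqref{eq:usual-deg-VL} together with hypothesis \eqref{eq:cond-isomorphism-parabolicmodulispaces} to see that $L$ can be chosen with the right degree, and then fix the determinant exactly. The paper's argument is only two sentences (choose $L$ of the correct degree, then twist by a suitable degree-zero line bundle); your version supplies the details it leaves implicit --- the identification of $\Lambda^n(V\otimes^P L)$ via the Hecke-modification description of weight wrap-around, the choice of $L$ as an $n$-th root, and the explicit parabolic inverse --- and these all check out.
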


\proof
In view of \eqref{eq:usual-deg-VL} and the remarks preceding the
statement of the theorem, we can find a line bundle $L$ such that
tensoring by $L$ gives an isomorphism 
$\cM_d^{\bm m,\bm\a}(\GL(n,\C))\xra{\cong}\cM_{d'}^{\bm m,\bm\a'}(\GL(n,\C))$. In
order to get the isomorphism between the fixed determinant moduli
spaces it suffices to adjust $L$ by tensoring by a suitable (non-parabolic) degree
zero line bundle.
\endproof

The following corollary describes two situations where it is possible to find $\bm\beta$ satisfying \eqref{eq:cond-isomorphism-parabolicmodulispaces}. The conditions imposed are mild, and since we will at some point impose full flags at all points of $D$, we will be under the assumptions of this corollary.

\begin{corollary}\label{cor:isomdifferentdegreesandweights-fullflag}
Consider the moduli space $\cM_\Lambda^{\bm m,\bm\a}(\SL(n,\C))$.
\begin{enumerate}
\item If $\Lambda'$ is any line bundle of degree $d'$ such that $d'\equiv d\ (\mathrm{mod}\, n)$, then $\cM_\Lambda^{\bm m,\bm\a}(\SL(n,\C))\cong\cM_{\Lambda'}^{\bm m,\bm\a}(\SL(n,\C))$.
\item Suppose the parabolic structure $\bm\a$ is such that for some
  point $p_0\in D$ the flag is full (i.e., $s_{p_0}=n$). Then, given
  any line bundle $\Lambda'$ of any degree $d'$, there exists a
  parabolic structure $\bm\a'$ such that
  $\cM_{\Lambda'}^{\bm m,\bm\a'}(\SL(n,\C))\cong\cM_\Lambda^{\bm m,\bm\a}(\SL(n,\C))$. 
\end{enumerate}
\end{corollary}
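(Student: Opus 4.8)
The plan is to derive both parts directly from the Theorem by exhibiting, in each case, a weight vector $\bm\beta=(\beta(p))_{p\in D}$ in $[0,1)$ satisfying the congruence \eqref{eq:cond-isomorphism-parabolicmodulispaces}, and then letting the Theorem produce the tensoring isomorphism. The key quantity to control is the ``degree-shift'' term $S(\bm\beta)=\sum_{p,i}m_i(p)\bigl(\alpha_i(p)+\beta(p)-\alpha_i'(p)\bigr)$ appearing in the congruence. The first observation I would make explicit is that each summand $\alpha_i(p)+\beta(p)-\alpha_i'(p)$ is, by the case-split in \eqref{eq:p-tensor-weights}, equal to either $0$ or $1$: it is $0$ precisely when $\alpha_i(p)+\beta(p)<1$ and $1$ when $\alpha_i(p)+\beta(p)\geq1$. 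Hence $S(\bm\beta)$ is always a nonnegative integer, namely the number of pairs $(p,i)$ (counted with multiplicity $m_i(p)$) for which the weight wraps around past $1$. This integrality is what makes the congruence sensible, and it is the structural fact on which everything rests.

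For part (1) I would simply take $\bm\beta=0$, i.e.\ the trivial parabolic line bundle. Then no weight wraps around, so $S(0)=0$, and $\bm\alpha'=\bm\alpha$. The congruence \eqref{eq:cond-isomorphism-parabolicmodulispaces} reduces to $d'-d\equiv0\pmod n$, which is exactly the hypothesis. Applying the Theorem with this choice of $\bm\beta$ gives $\cM_\Lambda^{\bm m,\bm\a}(\SL(n,\C))\cong\cM_{\Lambda'}^{\bm m,\bm\a}(\SL(n,\C))$, with unchanged weights since $\bm\alpha'=\bm\alpha$.

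For part (2) the aim is to realize an \emph{arbitrary} required value of $S(\bm\beta)$ modulo $n$, and this is where the full flag at $p_0$ enters. At $p_0$ we have $s_{p_0}=n$ and all multiplicities $m_i(p_0)=1$, so the $n$ weights $\alpha_1(p_0)<\dots<\alpha_n(p_0)$ are distinct in $[0,1)$. I would set $\beta(p)=0$ for $p\neq p_0$ and choose $\beta(p_0)$ so as to control how many of the $\alpha_i(p_0)$ satisfy $\alpha_i(p_0)+\beta(p_0)\geq1$. As $\beta(p_0)$ increases from $0$ toward $1$, this count increases by exactly one each time $\beta(p_0)$ crosses one of the $n$ distinct thresholds $1-\alpha_i(p_0)$; consequently $S(\bm\beta)$, which equals this count, takes every value in $\{0,1,\dots,n\}$ as $\beta(p_0)$ ranges over $[0,1)$ — and in particular every residue class modulo $n$ is attained (with $0$ and $n$ both giving the class $0$). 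Therefore, given any $d'$, I can pick $\beta(p_0)\in[0,1)$ making $S(\bm\beta)\equiv d'-d\pmod n$, which is precisely \eqref{eq:cond-isomorphism-parabolicmodulispaces}; the Theorem then supplies the isomorphism $\cM_{\Lambda'}^{\bm m,\bm\a'}(\SL(n,\C))\cong\cM_\Lambda^{\bm m,\bm\a}(\SL(n,\C))$ for the induced $\bm\alpha'$.

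The only genuine subtlety — the step I would treat most carefully — is the surjectivity claim in part (2): that the full flag at a single point $p_0$ already forces $S(\bm\beta)$ to hit \emph{all} residues mod $n$. The essential point is that at a full-flag point the $n$ weights are genuinely distinct, so their $n$ wrap-around thresholds $1-\alpha_i(p_0)$ are distinct and each crossing increments the count by exactly $1$; this is what guarantees the count sweeps through a full set of residues rather than skipping some. (Had $p_0$ not been full, coincident weights could make the count jump by more than one, and one could miss residue classes.) Everything else is the bookkeeping already packaged in the Theorem, together with the integrality of $S(\bm\beta)$ noted above.
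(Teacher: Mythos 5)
Your proposal is correct and follows essentially the same route as the paper: part (1) via the trivial choice $\bm\beta=0$ (equivalently, tensoring by an $n$th root of $\Lambda'\Lambda^{-1}$), and part (2) by setting $\beta(p)=0$ away from $p_0$ and tuning $\beta(p_0)$ so that exactly $k\equiv d'-d\pmod n$ of the distinct weights at the full-flag point wrap around past $1$. The only tiny imprecision is that the wrap-around count attains the value $n$ only when $\alpha_1(p_0)>0$ (otherwise the threshold $1-\alpha_1(p_0)=1$ lies outside $[0,1)$), but since the values $0,\dots,n-1$ are always attained, every residue class modulo $n$ is reached and the argument is unaffected.
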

\proof For the first item, we just have to take $L$ a $n$th root of
$\Lambda'\Lambda^{-1}$ and use the usual tensor product
$V\mapsto V\otimes L$. This is of course the generalization to the
parabolic case of the classical isomorphism in the non-parabolic case.

For the second item, suppose that $d'-d\equiv k\ (\mathrm{mod}\,
n)$. Since the flag over $p_0$ is full, we can choose $\beta(p_0)\in[0,1)$
such that
$\#\{i \st \a_i(p_0)+\beta(p_0)\geq 1\} = k$.
If $p\in D\setminus\{p_0\}$, take $\beta(p)=0$, so 
$\#\{i \st \a_i(p)+\beta(p)\geq 1\} = 0$.
With these choices, \eqref{eq:cond-isomorphism-parabolicmodulispaces} holds in view of \eqref{eq:p-tensor-weights}, and the conclusion follows by the theorem.
\endproof


%

\subsection{Basic assumptions}

We now make two assumptions. 
\begin{assumption}\label{assump:generweights}
We assume from now on that:
\begin{enumerate}
\item the weights $\bm\a$ are generic;
\item the flags over all points of $D$ are full (i.e., $m_i(p)=1$ for all $i$ and all $p\in D$, thus $s_p=n$ for all $p$).
\end{enumerate}
\end{assumption}

Since from now on $m_i(p)=1$ for all $i,p$, we shall remove the
$\bm m$ from the notation.

The first assumption is essential for us. It implies that any
semistable parabolic Higgs bundle is stable and hence, as shown by
Yokogawa \cite{yokogawa:1993}, the moduli space is smooth.

The second assumption serves two purposes. Firstly, the SYZ mirror
symmetry picture (outlined in the next section) has currently only
been shown under this assumption. Secondly, it simplifies the formulas
in our calculations of Hodge polynomials. We point out, however, that
these calculations generalize without too much trouble to the
case of general flags.

Summarizing, under Assumption~\ref{assump:generweights}, the moduli
space of parabolic Higgs bundles $\mathcal{M}$ is a smooth
quasi-projective hyper-K\"ahler manifold. Its dimension can be
calculated using deformation theory (see, for example,
\cite[Proposition 2.4]{garcia-prada-gothen-munoz:2007}) and is given by
\begin{equation}\label{eq:dim}
\dim(\cM)=2(n^2-1)(g-1)+|D|n(n-1),
\end{equation}
where we recall that $|D|=\deg(D)$ is the number of marked points on $X$.


\section{Mirror symmetry}
\label{sec:mirror-symmetry}

In this section we recall the Hausel--Thaddeus mirror symmetry
proposal in the parabolic case. First, in Section~\ref{subsec:Hitmap}
we treat the Hitchin system and mirror symmetry according to
Strominger--Yau--Zaslow. Next, in
Section~\ref{sec:stringy-E-polynomial}, we recall the definition of
the stringy $E$-polynomial and show, following Thaddeus, its
independence of the parabolic weights. Finally, in
Section~\ref{sec:topological-MS} we state our main result and outline
its proof.

\subsection{The Hitchin map and SYZ mirror symmetry}
\label{subsec:Hitmap}

In this section we briefly describe how $\cM$ and $\cM/\Gn$ are mirror
partners in the sense of Strominger--Yau--Zaslow (SYZ)
\cite{strominger-yau-zaslow:1993}.  This has been shown in the
parabolic case by Biswas--Dey \cite{biswas-dey:2012} (following
Hausel--Thaddeus \cite{hausel-thaddeus:2003}). The general version of
SYZ mirror symmetry proved by these authors requires considering a
naturally defined gerbe (or $B$-field) on the moduli spaces (see also
Donagi--Pantev \cite{donagi-pantev:2012}). As explained below, the
statement of SYZ mirror symmetry involves identifying fibers of the
Hitchin maps of the two moduli spaces as dual abelian varieties. The
need for introducing the $B$-field comes from the lack of a natural
base point in these fibers. In the parabolic case there is a twist in
the story: the moduli spaces $\cM$ and $\cM/\Gn$ are also
mirror partners in the original sense of SYZ.

\begin{remark}
\label{rem:SYZ-de-Rham-Dolbeault}
  We recall that the true mirror partners are in fact the \emph{de
    Rham moduli spaces}; these are moduli spaces of local systems on
  $X$ and are diffeomorphic to the Higgs bundle moduli spaces under
  the non-abelian Hodge correspondence. As explained in
  \cite[Sec.~1]{hausel-thaddeus:2003} in the non-parabolic case, the
  statements on the de Rham side can be translated into statements on
  the Higgs bundle side through a hyper-K\"ahler rotation, and this
  works exactly the same way in the parabolic case. We refer the
  reader to Simpson \cite{simpson:1990} and Alfaya--Gómez
  \cite{alfaya-gomez:2016} for details on the de Rham moduli spaces in
  the parabolic case.
\end{remark}

We now introduce the Hitchin system in the parabolic setting.  This
goes back
to Bottacin \cite{bottacin:1995} and Nasatyr--Steer
\cite{nasatyr-steer:1995}. We start by defining the \emph{Hitchin
map} $h$ on the moduli spaces $\cM$ and $\cM/\Gn$: it
takes a parabolic Higgs bundle $(V,\vp)$ to the coefficients of the
characteristic polynomial of the twisted endomorphism
$\vp:V\to V\otimes K(D)$. Thus $h(V,\vp)=(s_2,\ldots,s_n)$, with
$s_i=\tr(\wedge^i\vp)$.  Since $\vp$ is strongly parabolic, its
restriction to every $p\in D$ is nilpotent, and so all the corresponding
coefficients $s_i(p)$ of the characteristic polynomial vanish. We
therefore have
\begin{equation}\label{eq:Hitchinmap}
  \begin{aligned}
    h\colon\mathcal{M}&\to \mathcal{A}=\bigoplus_{i=2}^nH^0(X,K^i((i-1)D)),\\
    (V,\vp)&\mapsto (s_2,\ldots,s_n),
  \end{aligned}
\end{equation}
where $\mathcal{A}$ is the
\emph{Hitchin base}.
It is clear that $h$ factors through the quotient $\cM/\Gn$, so we
also have a Hitchin map $h'$ on this moduli space:
\[
\xymatrix{\cM\ar[dr]_{h}&&\cM/\Gn\ar[dl]^{h'}\\&\mathcal{A}&}
\]  
Observe that
\[
\dim(\mathcal{A})=(n^2-1)(g-1)+\frac{n(n-1)|D|}{2}=\frac{\dim(\cM)}{2}.
\]

By \cite{yokogawa:1993}, the map $h$ is proper, hence so is
$h'$. The coordinate functions of $h$ and $h'$ are independent and Poisson commute, and
these maps form the \emph{Hitchin systems} for $\SL(n,\C)$ and
$\PGL(n,\C)$, respectively. In particular, for $s\in\mathcal{A}$, the fibers $h^{-1}(s)$ and $h^{\prime -1}(s)$ are complex Lagrangian subvarieties of $\cM$ and $\cM/\Gn$.

To describe the generic Hitchin fibers more precisely, consider the
quasi-projective surface given by the total space $|K(D)|$ of $K(D)$
and the  projection $\pi\colon|K(D)|\to X$. Given a point
$s=(s_2,\ldots,s_n)$ in the Hitchin base $\mathcal{A}$, there is a
projective curve $X_s$, lying in $|K(D)|$, defined by the zeros of the
section
\begin{equation}\label{eq:section-spectral curve}
\lambda^n+\pi^*s_2\lambda^{n-2}+\cdots+\pi^*s_n\in H^0(|K(D)|,\pi^*(K^n(nD)))
\end{equation}
where $\lambda$ is the tautological section of $\pi^*(K(D))$ and
$\lambda^{n-i}\pi^*s_i\in H^0(|K(D)|,\pi^*(K^n((n-1)D)))\subseteq
H^0(|K(D)|,\pi^*(K^n(nD)))$. The curve $X_s$ is called the
\emph{spectral curve} associated to $s\in\mathcal{A}$.  The
restriction of $\pi$ to $X_s$ gives an $n$-cover $\pi:X_s\to X$ which
is ramified over the locus where \eqref{eq:section-spectral curve}
has multiple roots. This locus is always non-empty.

By Lemma 3.1 of \cite{gomez-logares:2011}, there is an open and dense
subspace $\mathcal{A}'\subset  \mathcal{A}$ such that $X_s$ is smooth
whenever $s\in \mathcal{A}'$ (this needs the assumption $g\geq 2$ on the genus of $X$). Moreover, for such generic $s$, Lemma
3.2 of \cite{gomez-logares:2011} states that the fibre $h^{-1}(s)$ is
naturally isomorphic to 
\begin{equation}
  \label{eq:fibre-Pd}
P^{d'}=\{L\in\Pic^{d'}(X_s)\st\det(\pi_*L)\cong\Lambda\},
\end{equation}
where $d'=d+n(n-1)(g-1+|D|/2)$.

\begin{remark}
  Lemma 3.2 of \cite{gomez-logares:2011} needs the full flags
  assumption on every point of the divisor $D$. This is one reason why
  we confine ourselves to the full flag condition.
\end{remark}

\begin{definition}\label{def:NmDiv}
Consider a degree $n$ cover $\pi:Y\to X$. The \emph{norm map} between the groups of divisors $\Nm_\pi\colon\Div(Y)\to\Div(X)$ is the homomorphism taking a divisor $E=\sum E(p)p$ on $Y$ to the divisor $\Nm_\pi(E)=\sum E(p)\pi(p)$ on $X$.
\end{definition}

The norm map just defined factors through the norm map between the Picard groups $\Nm_\pi\colon\Pic^0(X_s)\to\Pic^0(X)$, by $\Nm_\pi(L)=L'$ where $L\cong\cO_Y(E)$ and $L'\cong\cO_X(\Nm_\pi(E))$. 

\begin{definition}\label{def:Prym}
The \emph{Prym variety of $Y$} associated to $\pi$, denoted by
$\Prym_\pi(Y)$, is the abelian subvariety of $\Pic^0(Y)$ defined as the identity component of the kernel of $\Nm_\pi$. 
\end{definition}

The kernel of $\Nm_\pi$ is connected if and only if $\pi$ is
ramified and
sometimes the term Prym variety is used for the full kernel of $\Nm_\pi$. We have adopted Definition~\ref{def:Prym} in accordance with \cite{hausel-thaddeus:2003}.  
Since for  $s\in\mathcal A'$ the cover $\pi:X_s\to X$ is ramified\footnote{In Section \ref{sec:unramified-Norm,Prym,actions} we shall need to consider the norm map and corresponding Prym of unramified covers.}, we have
\begin{equation}\label{eq:Prym=ker-ramified}
\Prym_{\pi}(X_s)=\ker(\Nm_\pi)=\{L\in\Pic^0(X_s)\st\Nm_\pi(L)\cong\cO_X\}.
\end{equation}
Note
that 
\begin{equation}\label{eq:det-norm}
\det(\pi_*L)\cong\Nm_\pi(L)\otimes\det(\pi_*(\cO_{X_s}))\cong\Nm_\pi(L)\otimes
(K(D))^{-n(n-1)/2}.
\end{equation}
Thus we see that
\[P^{d'}\cong\{L\in\Pic^{d'}(X_s)\st\Nm_\pi(L)\cong\Lambda (K(D))^{n(n-1)/2}\}\]
is a torsor for $\Prym_\pi(X_s)$. 

It is also easy to see that $h^{\prime -1}(s)$ is isomorphic $P^{d'}/\Gn$, hence it is a torsor for the quotient $\Prym_\pi(X_s)/\Gn$, where $\Gn$ acts by tensoring by the pull-back via $\pi$. The quotient $\Prym_\pi(X_s)/\Gn$ is an abelian variety, isogenous to $\Prym_\pi(X_s)$.

By dualising the short exact sequence coming from the norm map, one easily checks \cite[Lemma 2.3]{hausel-thaddeus:2003} that these two abelian varieties are dual to each other, in the sense that 
\[\Pic^0(\Prym_\pi(X_s))\cong\Prym_\pi(X_s)/\Gn\qquad \text{ and }\qquad \Pic^0(\Prym_\pi(X_s)/\Gn)\cong\Prym_\pi(X_s).\]

\begin{theorem}[Hausel--Thaddeus \cite{hausel-thaddeus:2001}]\label{thm:SYZmirror}
Assume that $s\in\mathcal{A}$ has simple zeros. Then the Hitchin
fibers $h^{-1}(s)$ and $h^{\prime -1}(s)$ can be naturally identified with
a pair of dual abelian varieties. Hence $\mathcal{M}$ and
$\mathcal{M}/\Gamma_n$ are SYZ mirror partners.
\end{theorem}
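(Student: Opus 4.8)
The strategy is to combine the identifications already assembled in the discussion preceding the statement with the existence of a Lagrangian section of the Hitchin fibration; the latter is precisely what upgrades the picture from ``torsors under dual abelian varieties'' to a genuine SYZ mirror pair.

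The abelian-variety and duality parts of the statement are essentially in hand. Since $s$ has simple zeros, the spectral curve $X_s$ is smooth and $s\in\mathcal A'$; by \eqref{eq:fibre-Pd} and \eqref{eq:det-norm} the fibre $h^{-1}(s)\cong P^{d'}$ is a torsor for $\Prym_\pi(X_s)$, while $h^{\prime-1}(s)\cong P^{d'}/\Gn$ is a torsor for $\Prym_\pi(X_s)/\Gn$. Dualising the short exact sequence coming from the norm map gives $\Pic^0(\Prym_\pi(X_s))\cong\Prym_\pi(X_s)/\Gn$ and $\Pic^0(\Prym_\pi(X_s)/\Gn)\cong\Prym_\pi(X_s)$, so the two abelian varieties acting on the fibres are dual. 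What remains is to produce, naturally in $s$, a base point in each fibre, turning these torsors into canonically identified dual abelian varieties; a base point for $h$ descends along $\cM\to\cM/\Gn$ to one for $h'$.

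These base points come from a \emph{Hitchin section} $\sigma\colon\mathcal A\to\cM$ of the $\SL(n,\C)$-Hitchin map. I would first recall its classical construction for the degree for which it is directly available: fixing a line bundle $S$ with $S^2\cong K(D)$ (for odd $n$ one may use integer powers of $K(D)$ directly), one takes the Hodge bundle $V=\bigoplus_{i=1}^n S^{n+1-2i}$ with full flags at $D$ and a companion-type Higgs field whose free coefficients are the components $(s_2,\dots,s_n)$ of the point of $\mathcal A$. One checks that its residue at each $p\in D$ is the principal nilpotent for the flag, so that $\sigma(s)$ is a \emph{strongly} parabolic $\SL(n,\C)$-Higgs bundle lying over $s$, that it is stable, and that $\sigma(\mathcal A)$ is Lagrangian for the holomorphic symplectic form, exactly as in the non-parabolic case \cite{hausel-thaddeus:2001,hausel-thaddeus:2003}.

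Since this construction pins down the determinant, hence the degree, the final step is to pass to an arbitrary $\Lambda$. Here I invoke Corollary~\ref{cor:isomdifferentdegreesandweights-fullflag}: under Assumption~\ref{assump:generweights} the flags are full at every point of $D$, so for any target determinant there is a parabolic line bundle $(L,\bm\beta)$ inducing an isomorphism $\cM_\Lambda^{\bm\a}(\SL(n,\C))\cong\cM_{\Lambda'}^{\bm\a'}(\SL(n,\C))$ given by parabolic tensor product. This isomorphism intertwines the two Hitchin maps, so transporting $\sigma$ through it yields a Lagrangian section for every degree, and with it the required base points. This transport is the step I expect to be the main obstacle: one must verify that tensoring by $(L,\bm\beta)$ commutes with the Hitchin fibrations and carries a Lagrangian section to a Lagrangian section, equivalently that the holomorphic symplectic form is preserved. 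Granting this, the base points identify $h^{-1}(s)$ and $h^{\prime-1}(s)$ with the dual pair $\Prym_\pi(X_s)$ and $\Prym_\pi(X_s)/\Gn$, and the existence of the Lagrangian section is exactly the condition for $\cM$ and $\cM/\Gn$ to be SYZ mirror partners in the strict sense, without a $B$-field.
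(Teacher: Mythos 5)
Your proposal is correct and follows essentially the same route as the paper: the torsor structure and duality of the Pryms from the preceding discussion, a parabolic Hitchin section built from a Hodge bundle with companion-matrix Higgs field (checked to be strongly parabolic, stable, and Lagrangian) to supply base points, and transport to arbitrary determinant via the isomorphism of Corollary~\ref{cor:isomdifferentdegreesandweights-fullflag}. The only difference is a normalization: the paper takes $V=K(D)^{1-n}\oplus\cdots\oplus\cO_X$ with determinant $K(D)^{-n(n-1)/2}$, for which the generic fibre is literally $\Prym_\pi(X_s)$ and $\sigma(s)\cong\pi_*\cO_{X_s}$ sits at the origin, whereas your symmetric choice $\bigoplus_i S^{n+1-2i}$ requires a square root of $K(D)$ and lands at a less canonical base point.
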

\proof Assume first that $\Lambda\cong K(D)^{-n(n-1)/2}$ (so that
$d=-n(n-1)(g-1+|D|/2)$ and $d'=0$). In this case, \eqref{eq:fibre-Pd},
\eqref{eq:Prym=ker-ramified} and \eqref{eq:det-norm} show that the
fibre of the Hitchin map $h:\mathcal{M}\to\mathcal A$ over a generic point $s\in\mathcal A'$ is naturally
identified with $\Prym_\pi(X_s)$ and not just a torsor over it, and
analogously for the fibre of $h'$.  Hence, in view of the observations
preceding the statement of the theorem, we have the desired conclusion
if we show that the base points of these Pryms form a Lagrangian
section $\sigma$ of $h$.

This is obtained, similarly to Hitchin's construction
\cite{hitchin:1992} in the non-parabolic case, as follows\footnote{The
  authors thank Laura Fredrickson for pointing out a mistake at this
  point of an earlier version of the paper.}. Given
$(s_2,\ldots,s_n)\in\mathcal A$, consider the parabolic Higgs bundle
$(V,\vp)$ where
\begin{equation}\label{eq:Hitsect-bundle}
V=K(D)^{1-n}\oplus\cdots\oplus K(D)^{-1}\oplus\cO_X,
\end{equation}
with the Higgs field defined, in this decomposition, by
\begin{equation}\label{eq:Hitsect-field}
\varphi=\left(\begin{array}{ccccc}0 & 1 & \cdots & 0 & 0 \\\vdots & \ddots & \ddots & \vdots & \vdots \\0 & 0 & \ddots & 1 & 0 \\0 & 0 & \cdots & 0 & 1 \\s_n & s_{n-1} & \cdots & s_2 & 0\end{array}\right)
\end{equation} 
and such that the full flag of $V_p$ over each $p\in D$ is given by 
\begin{equation}\label{eq:Hitsect-flag}
V_p \supsetneq K(D)_p^{2-n}\oplus\cdots\oplus \cO_{X,p} \supsetneq \cdots \supsetneq
 \cO_{X,p}\supsetneq \{0\},
\end{equation} 
with the weights determined by $\cM$. It is clear that $\varphi$ is
strongly parabolic. Moreover, for any choice of weights $\bm\alpha$,
the stability of $(V,\vp)$ follows by a straightforward adaptation of
the corresponding argument in \cite{hitchin:1992}.  Hence the map
$\sigma:\mathcal A\to\mathcal M,\;(s_2,\ldots,s_n)\mapsto(V,\vp)$, where
the parabolic Higgs bundle $(V,\vp)$ is defined by
\eqref{eq:Hitsect-bundle}--\eqref{eq:Hitsect-flag}, provides a section
of $h$. This section is Lagrangian for the same reason as in the non-parabolic
case, namely that the underlying bundle is fixed as $(s_2,\ldots,s_n)$
moves in the Hitchin base. Finally, for each $s\in\mathcal A'$, $V$ in
\eqref{eq:Hitsect-bundle} is such that $V\cong\pi_*\mathcal O_{X_s}$,
so $\sigma$ is indeed a Lagrangian section through the base points
$\mathcal O_{X_s}\in\Prym_\pi(X_s)$.

Now, for any other line bundle $\Lambda$ of any degree, use
Corollary \ref{cor:isomdifferentdegreesandweights-fullflag} to get an
isomorphism
$\cM_\Lambda^{\bm\a'}(\SL(n,\C))\cong
\cM_{K(D)^{-n(n-1)/2}}^{\bm\a}(\SL(n,\C))$ (since we are assuming full
flags, the hypotheses of Corollary
\ref{cor:isomdifferentdegreesandweights-fullflag} are satisfied). Clearly this map
is in fact an isomorphism of the corresponding Hitchin systems, and
descends to the $\PGL(n,\C)$-Hitchin systems, giving
us in particular the desired identifications of the Hitchin fibers as
dual abelian varieties.
\endproof

\begin{remark}
  This fits with a general
  phenomenon in SYZ-mirror symmetry, where if torus fibrations admit a
  Lagrangian section, then the $B$-field is unnecessary for the
  symmetry to work out; see for example Hitchin~\cite{hitchin:2001} or
  Polishchuk~\cite{polishchuk:2003}.
  In the more general version involving a $B$-field, the
  identification of Hitchin fibers as dual abelian varieties comes
  about through a choice of trivialization of the restriction of the
  gerbe. Moreover, if there is a canonical coherent choice of
  trivialization of the gerbe in all fibers, the ``gerby'' duality
  (see Hausel--Thaddeus \cite[p.~202]{hausel-thaddeus:2003} for its
  definition) follows from the usual one described here. This would be
  the case if, for example, the gerbe were known to be trivial.
\end{remark}

\subsection{The (stringy) $E$-polynomial}
\label{sec:stringy-E-polynomial}

Let $M$ be a \emph{semiprojective} variety (see
\cite{hausel-rodriguez-villegas:2015}). This means that $M$ is
quasi-projective and that (i) it carries an algebraic $\C^*$-action
such that for any point $p$ in $M$, the limit of the $\C^*$-orbit
$(t\cdot p)_{t\in\C^*}$ when $t$ goes to $0$ exists in $M$, and (ii)
the subvarieties of $M$ of fixed points under $\C^*$ are compact. Then
by \cite[Corollary 1.3.2]{hausel-rodriguez-villegas:2015} if $M$ is
smooth, the (compactly supported) cohomology of $M$ is pure. Hence its
\emph{$E$-polynomial} is given by
\[
E(M)=\sum_{p,q=0}^{\dim(M)}(-1)^{p+q}h_c^{p,q}(M)u^pv^q,
\]
where $h_c^{p,q}(M)=\dim H^{p,q}_c(M,\C)$. Also, if $M$ has an action
of a group $\Gamma$, we let $E(M)^\Gamma$ denote the
\emph{$\Gamma$-invariant $E$-polynomial}, i.e.,
\begin{equation}\label{eq:Epolynomial-mixed-Gamma-inv}
  E(M)^{\Gamma_n} = \sum_{p,q=0}^{\dim(M)}(-1)^{p+q}\dim H_c^{p,q}(M)^\Gamma u^pv^q,
\end{equation}
where $H_c^{p,q}(M)^\Gamma \subset H_c^{p,q}(M)$ is the
$\Gamma$-invariant subspace.

The main motivational example for the definition of semiprojective
varieties comes precisely from the moduli spaces of Higgs
bundles. Indeed, they carry an algebraic $\C^*$-action, also in the
strongly parabolic case, defined by
\begin{equation}\label{eq:C*-action}
t\cdot(V,\vp)=(V,t\vp),\ t\in\C^*.
\end{equation}
The following proposition describes two well-known fundamental
properties of this action, which in particular show that $\cM$ is
semiprojective.

\begin{proposition}\label{prop:C*-action-prop} The $\C^*$-action on
  $\cM$ verifies the following properties.
\begin{enumerate}
\item For any point $(V,\vp)\in\cM$, the limit $\lim_{t\to 0}(V,t\vp)$
  exists in $\cM$ and is a fixed point.
\item The subvarieties of $\cM$ of fixed points are compact. 
\end{enumerate}
Therefore, $\cM$ is semiprojective.
\end{proposition}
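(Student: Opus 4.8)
The plan is to deduce both properties from the properness of the Hitchin map $h\colon\cM\to\mathcal{A}$ recorded above. The key computation is that the coordinate $s_i=\tr(\wedge^i\vp)$ of $h$ is homogeneous of weight $i$ for the $\C^*$-action $t\cdot(V,\vp)=(V,t\vp)$, so that
\[
h(V,t\vp)=(t^2s_2,t^3s_3,\ldots,t^ns_n).
\]
Hence the image under $h$ of the orbit of any $(V,\vp)$ is the curve $t\mapsto(t^2s_2,\ldots,t^ns_n)$, which extends to a morphism from all of $\C$ carrying $t=0$ to the origin $0\in\mathcal{A}$.

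First I would establish existence of the limit in part (1) by the valuative criterion of properness. Over the discrete valuation ring $R=\C[[t]]$ with fraction field $\C((t))$, the orbit map $t\mapsto t\cdot(V,\vp)$ defines a $\C((t))$-point of $\cM$ whose composite with $h$ extends, by the homogeneity above, to an $R$-point of $\mathcal{A}$ sending the closed point to the origin. Since $h$ is proper, the valuative criterion yields a unique lift to an $R$-point of $\cM$; its value at the closed point is exactly $\lim_{t\to0}(V,t\vp)$, which therefore exists in $\cM$, uniqueness being guaranteed by separatedness of $\cM$. That this limit is a $\C^*$-fixed point then follows from continuity of the action, since for every $s\in\C^*$
\[
s\cdot\Big(\lim_{t\to0}t\cdot(V,\vp)\Big)=\lim_{t\to0}(st)\cdot(V,\vp)=\lim_{t\to0}t\cdot(V,\vp).
\]

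For part (2), I would observe that the fixed locus consists of Hodge bundles and is contained in the central Hitchin fibre $h^{-1}(0)$, the nilpotent cone. Indeed, if $(V,\vp)$ is fixed then $h(V,t\vp)=h(V,\vp)$ for all $t\in\C^*$, and comparison with the homogeneity formula forces $t^is_i=s_i$ for every $t$, whence $s_i=0$ and $h(V,\vp)=0$. Because $h$ is proper, the fibre $h^{-1}(0)$ is compact; the fixed locus is a closed subvariety of $h^{-1}(0)$ (the fixed-point set of an algebraic group action is closed), and a closed subvariety of a compact variety is compact.

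Combining the two parts with the quasi-projectivity of $\cM$ (smooth under Assumption~\ref{assump:generweights}) gives semiprojectivity in the sense of \cite{hausel-rodriguez-villegas:2015}. The hard part is the rigorous verification of existence of the limit in part (1): one must rule out the orbit escaping to infinity as $t\to0$, and this is precisely what properness of $h$ provides, with separatedness of $\cM$ supplying uniqueness. Once the weight computation for $h$ is in hand, the remaining arguments are formal.
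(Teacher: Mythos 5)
Your argument is correct, and it diverges from the paper's in an interesting way on part (2). For part (1) you and the paper take the same route: the paper simply cites the properness of the Hitchin map and Simpson's \cite[Corollary~9.20]{simpson:1994b}, which is precisely the valuative-criterion argument you spell out using the weight-$i$ homogeneity of $s_i=\tr(\wedge^i\vp)$. For part (2), however, the paper argues analytically: the $\C^*$-fixed points are exactly the critical points of the $L^2$-norm of the Higgs field $f\colon\cM\to\R_{\geq 0}$ (citing \cite{garcia-prada-gothen-munoz:2007}), and $f$ is proper by \cite[Theorem~4.1(a)]{boden-yokogawa:1996}, whence the critical loci are compact. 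You instead observe that homogeneity forces every fixed point into the nilpotent cone $h^{-1}(0)$, which is compact by properness of $h$, and that the fixed locus of an algebraic $\C^*$-action on a separated variety is closed, hence compact. Your version is purely algebraic and leans only on the properness of $h$ (already needed for part (1)), so it is arguably more economical and self-contained; the paper's version instead plugs into the gauge-theoretic/Morse-theoretic picture where $f$ is a moment map and its critical submanifolds drive the Bialynicki-Birula/Morse stratification used later in the paper. Both are complete proofs, and your deduction of semiprojectivity from (1), (2) and quasi-projectivity matches the definition in \cite{hausel-rodriguez-villegas:2015}.
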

\proof This is well known. The first item follows from the properness
of the Hitchin map \eqref{eq:Hitchinmap}, just as in \cite[Corollary
9.20]{simpson:1994b}. Regarding the second item, the $\C^*$-fixed
points are precisely the critical points of the real function
$f:\cM\to\R_{\geq 0}$ given by the $L^2$-norm of the Higgs field (see
Proposition 3.3 of \cite{garcia-prada-gothen-munoz:2007}). By Theorem
4.1 (a) of \cite{boden-yokogawa:1996}, $f$ is proper, hence (2) also
follows.
\endproof

The moduli space $\cM/\Gn$ has orbifold singularities and, following
Hausel-Thaddeus \cite{hausel-thaddeus:2003} (in turn based on Batyrev--Dais
\cite{batyrev-dais:1996} and Batyrev \cite{batyrev:1998}), we consider
the 
\emph{stringy $E$-polynomial} of $\cM/\Gn$, defined as 
\begin{equation}\label{eq:stringyEpolynomial}
E_{\mathrm{st}}(\cM/\Gn)=\sum_{\gamma\in\Gn}E(\cM^\gamma)^{\Gn}(uv)^{F(\gamma)},
\end{equation}
where the various objects on the right hand side will be defined in
the following. We note in passing that this stringy $E$-polynomial
encodes the Chen--Ruan cohomology of $\cM/\Gn$ as an orbifold
\cite{chen-ruan:2004}.

The subspace $\cM^\gamma\subset\cM$ is the locus pointwise fixed by
$\gamma$. Since it is abelian, $\Gn$ preserves $\cM^\gamma$ and then
$E(\cM^\gamma)^{\Gn}$ is defined as in
\eqref{eq:Epolynomial-mixed-Gamma-inv}. 
The \emph{fermionic shift} $F(\gamma)$ is defined as follows: given
$p\in\cM^\gamma$, the element $\gamma$ acts on the tangent space
$T_p\cM$ with eigenvalues
$\lambda_1,\ldots,\lambda_{\dim(\cM)}$. Since $\Gn$ is finite, these
are roots of the unity, hence we can write $\lambda_j=e^{2\pi i w_j}$,
with $0\leq w_j<1$ and $w_j\in\mathbb{Q}$. The {fermionic shift} is
the number
\begin{equation}\label{eq:fermionicshift}
F(p,\gamma)=\sum_{j=1}^{\dim(\cM)}w_j.
\end{equation}
Clearly it is constant along the connected component of $\cM^\gamma$
containing $p$.  In general the fermionic shift is just a rational
number but, in our case, we can be much more precise. Recall that
$\gamma$ acts by
$\gamma\cdot (V,\vp)=(V\otimes L_\gamma,\vp\otimes\Id_{L_\gamma})$,
where $L_\gamma$ is the corresponding $n$-torsion line bundle. Recall
that $\cN$ denotes the moduli space of parabolic vector bundles with
fixed determinant $\Lambda$ and the same parabolic structure as the
one considered in $\cM$. Then $\gamma$ acts by diffeomorphisms on
$\cN$, hence acts by symplectomorphisms on the cotangent bundle
$T^*\cN$, which is an open and dense subspace of $\cM$, so $\gamma$
acts by symplectomorphisms on $\cM$. It follows that for each
eigenvalue $\lambda_j$, $\lambda_j^{-1}$ is also an eigenvalue. Since
$\gamma$ acts trivially on the subspace
$T_p\cM^\gamma\subseteq T_p\cM$, we conclude that
\begin{equation}\label{eq:fermionic=rkN/2}
F(p,\gamma)
=\sum_{l=1}^{\dim(N_p\cM^\gamma)/2}(w_l+1-w_l)=\frac{\dim(N_p\cM^\gamma)}{2}.
\end{equation}
where $N_p\cM^\gamma\subseteq T_p\cM$ denotes the normal bundle to
$\cM^\gamma$ at $p$.  We have already observed that, in general,
$F(p,\gamma)$ only depends on the connected component of $\cM^\gamma$
containing $p$. We shall in fact see in Section \ref{sec:stringy
  E-pol} that $\cM^\gamma$ is non-connected, but we shall also
conclude directly (see \eqref{eq:comput of fermionic} below) that the
value of $F(p,\gamma)$ is really independent of $p$, thus independent
of the component where it lies. That is the reason why we just wrote
$F(\gamma)$ in the definition \eqref{eq:stringyEpolynomial} of the
stringy $E$-polynomial.

\begin{remark}\label{rmk:F(e)=0}
  Note that $F(e)=0$ where $e$ is the trivial element of $\Gn$. Thus
  the stringy $E$-polynomial of a smooth variety coincides with the
  usual one. In particular we have that
  $E(\mathcal{M}) = E_{\mathrm{st}}(\mathcal{M})$.
\end{remark}

We conclude this section by pointing out that the $E$-polynomials are
independent of the weights $\bm\alpha$, as long as these are
generic. This will be useful in our later calculations (specifically,
in the proof of Proposition~\ref{prop:E-pol-variant-part}) since it
allows us to make simplifying assumptions on the weights. Everything
follows from the work of Thaddeus \cite{thaddeus:2002}, who studied
how the moduli space of parabolic Higgs bundles changes under wall
crossing of the parabolic weights. It is immediate from his
description that the Hodge numbers of the moduli space are unchanged
under wall crossing. We shall need a $\Gn$-equivariant version of this
result. This also follows from Thaddeus' description, which we now
briefly recall.

Let $\bm{\alpha}$ belong to only one wall in the space of parabolic
weights and consider weights $\bm\alpha^-$ and $\bm\alpha^+$ in the two adjacent
chambers. For brevity write $\mathcal{M}^\pm$ for either of the moduli
spaces $\mathcal{M}^{\bm\alpha^\pm}_{\Lambda}(\SL(n,\C))$ and
$\mathcal{M}^{\bm\alpha^\pm}_{d}(\GL(n,\C))$ (everything in this
section applies to both of these).
There are \emph{flip loci}
$\mathcal{S}^\pm\subset \mathcal{M}^{\pm}$ which correspond
to those parabolic Higgs bundles which are $\bm\alpha_{\pm}$-stable and
$\bm\alpha_{\mp}$-unstable. In the following, write $\mathbf{V}=(V,\vp)$
for a parabolic Higgs bundle. Points of $\mathcal{S}^-$ correspond to
parabolic Higgs bundles $\mathbf{V}$ which are non-split extensions
\begin{equation}
  \label{eq:S-minus}
  0 \to \mathbf{V}^+ \to \mathbf{V} \to \mathbf{V}^- \to 0,
\end{equation}
of parabolic Higgs bundles, where $\mathbf{V}^\pm$ are
stable with respect to the parabolic weights induced by
$\bm\alpha^\pm$. There is an analogous description of $\mathcal{S}^+$.
Thus there is a natural identification
\[
  g\colon \mathcal{M}^-\setminus \mathcal{S}^- \xra{\cong}
  \mathcal{M}^+\setminus \mathcal{S}^+.
\]
Denote by
$\pi^\pm\colon \tilde{\mathcal{M}}^{\pm} \to
{\mathcal{M}}^\pm$ the blow-ups of $\mathcal{M}^\pm$ along
$\mathcal{S}^\pm$ and by
$\mathcal{E}^\pm\subset \tilde{\mathcal{M}}^\pm$ the exceptional
divisors. Thaddeus \cite[(6.2)]{thaddeus:2002} proves that there is an
isomorphism
\begin{equation}
  \label{eq:M-tilde-iso}
  \tilde{g}\colon \tilde{\mathcal{M}}^- \xra{\cong}
    \tilde{\mathcal{M}}^+
\end{equation}
which restricts to an isomorphism
$\mathcal{E}^-\xra{\cong}\mathcal{E}^+$ of the exceptional divisors
and coincides with $g$ on their complement. It is a standard fact
about blow-ups that the cohomology groups of $\mathcal{M}^\pm$ inject
into the cohomology groups of $\tilde{\mathcal{M}}^\pm$ and from
\eqref{eq:M-tilde-iso} it follows that $\tilde{g}$ induces isomorphisms
\begin{equation}
  \label{eq:pm-pq-iso}
  H^{p,q}_c(\mathcal{M}^-) \cong  H^{p,q}_c(\mathcal{M}^+),
\end{equation}
considering these cohomology groups as subspaces of
$H^{p,q}_c(\tilde{\mathcal{M}}^\pm)$. Thus, for generic $\bm\alpha$, the
$E$-polynomials of $\mathcal{M}_d^{\bm\alpha}(\GL(n,\C))$ and
$\mathcal{M}_\Lambda^{\bm\alpha}(\SL(n,\C))$ are independent of
$\bm\a$.

In view of what we have said so far, it is now easy to prove the
following.

\begin{proposition}
  Let $\Gamma_n=\Jac_{[n]}(X)$ act on the moduli space of parabolic
  Higgs bundles by the action defined in \eqref{eq:Gamma_n action}.
  The isomorphism $\tilde{g}$ of \eqref{eq:M-tilde-iso} is
  equivariant with respect to this action.  Consequently, the
  isomorphism \eqref{eq:pm-pq-iso} is also $\Gn$-equivariant.
\end{proposition}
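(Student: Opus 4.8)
The plan is to show that $\G_n$-equivariance of $\tilde g$ reduces to checking that the wall-crossing data (the flip loci, the extension description, the blow-up, and the gluing isomorphism) are all natural with respect to the action $\gamma\cdot(V,\vp)=(V\otimes L_\gamma,\vp\otimes\Id_{L_\gamma})$. The key observation, which I would establish first, is that tensoring by the $n$-torsion line bundle $L_\gamma$ commutes with every construction appearing in Thaddeus' description. Indeed, $L_\gamma$ has degree zero and trivial parabolic structure (cf.\ the remark following \eqref{eq:Gamma_n action}), so for any parabolic Higgs bundle $\mathbf V=(V,\vp)$ the parabolic slope satisfies $\pmu(V\otimes L_\gamma)=\pmu(V)$, and the same holds for every parabolic subbundle. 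Hence $\mathbf V$ is $\bm\alpha^\pm$-stable (resp.\ $\bm\alpha^\mp$-unstable) if and only if $\gamma\cdot\mathbf V$ is, so the action preserves $\cM^\pm$ and carries the flip locus $\cS^\pm$ to itself.

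Next I would check naturality on the level of the extension description \eqref{eq:S-minus}. Tensoring the short exact sequence $0\to\mathbf V^+\to\mathbf V\to\mathbf V^-\to 0$ by $L_\gamma$ yields another short exact sequence of the same shape, with sub- and quotient bundles $\mathbf V^\pm\otimes L_\gamma$, and this operation is an isomorphism on the relevant $\Ext$-groups that parametrize the extensions (it is induced by an isomorphism of the parabolic Higgs bundles involved). Consequently the action of $\gamma$ maps non-split extensions to non-split extensions and respects the identification of $\cS^-$ with the relevant projectivized extension spaces; the same applies to $\cS^+$. Since the natural identification $g\colon\cM^-\setminus\cS^-\xra{\cong}\cM^+\setminus\cS^+$ is defined by sending a parabolic Higgs bundle to itself regarded with the other chamber's stability, and tensoring by $L_\gamma$ obviously commutes with ``regarding the same bundle differently,'' the map $g$ is $\G_n$-equivariant on the complement of the flip loci.

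It remains to see that $\tilde g$ is equivariant. Because $\G_n$ acts on $\cM^\pm$ preserving $\cS^\pm$, the action lifts canonically to the blow-ups $\tilde{\cM}^\pm$ (blowing up along a $\G_n$-invariant center is functorial), preserving the exceptional divisors $\cE^\pm$. On the complement of $\cE^\pm$ the lifted action agrees with the action on $\cM^\pm\setminus\cS^\pm$, where $\tilde g$ coincides with the equivariant map $g$; thus $\tilde g$ intertwines the two actions there. Since $\tilde{\cM}^-\setminus\cE^-$ is dense in $\tilde{\cM}^-$ and both $\tilde g$ and the $\G_n$-action are algebraic (hence continuous), the equivariance extends by continuity across $\cE^-$, giving equivariance of $\tilde g$ everywhere. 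The $\G_n$-equivariance of the cohomology isomorphism \eqref{eq:pm-pq-iso} then follows immediately, since it is induced by the equivariant $\tilde g$ via the (functorial, hence equivariant) injections $H^{p,q}_c(\cM^\pm)\into H^{p,q}_c(\tilde{\cM}^\pm)$.

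I expect the main obstacle to be the extension step: one must verify that Thaddeus' identification of $\cS^\pm$ with spaces of extensions is genuinely natural under $-\otimes L_\gamma$, i.e.\ that the isomorphism $\tilde g$ of \eqref{eq:M-tilde-iso} is built functorially from this extension data rather than by some auxiliary choice that might fail to commute with the action. In practice this is not hard, because every ingredient (stability, the blow-up center, the $\Ext$-parametrization) is manifestly preserved by tensoring with a degree-zero line bundle with trivial parabolic structure; but it is the point requiring the most care, and I would make it precise by appealing directly to the construction in \cite[(6.2)]{thaddeus:2002} and noting that $-\otimes L_\gamma$ induces an automorphism of the entire wall-crossing diagram.
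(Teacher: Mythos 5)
Your proof is correct and follows essentially the same route as the paper: observe that the $\Gamma_n$-action preserves the flip loci via the extension description \eqref{eq:S-minus}, lift the action to the blow-ups by the universal property of blowing up along an invariant center, and conclude equivariance of $\tilde g$ from its agreement with the equivariant map $g$ on the dense complement of the exceptional divisors. The extra details you supply (slope preservation under $-\otimes L_\gamma$, naturality on $\Ext$-groups) are consistent with, and merely expand on, the paper's argument.
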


\begin{proof}
  The basic observation is that the action of $\Gamma_n$ preserves
  $\mathcal{S}^\pm\subset \mathcal{M}^\pm$; this follows from the
  description of $\mathcal{S}^-$ (and the analogous description of
  $\mathcal{S}^+$) as corresponding to extensions of the form
  \eqref{eq:S-minus}. Hence the $\Gamma_n$-actions lift to the
  blow-ups $\tilde{\mathcal{M}}^\pm$ (as follows from the universal
  property of the blow-up, Hartshorne
  \cite[Cor.~II.7.15]{hartshorne:1977}). Moreover, the
  restriction of $\tilde{g}$ to the open dense subset
  $\tilde{\mathcal{M}}^-\setminus\mathcal{E}^-\subset
  \tilde{\mathcal{M}}^-$ is just $g$, which is $\Gamma_n$-equivariant
  by our initial basic observation. It follows that $\tilde{g}$
  is $\Gamma_n$-equivariant as claimed.
\end{proof}

\begin{corollary}
  \label{cor:alpha-independent-E-poly} 
  Assume that $\bm\alpha$ is generic and let $\mathcal{M}^{\bm\alpha}$
  denote either $\mathcal{M}_d^{\bm\alpha}(\GL(n,\C))$ or
  $\mathcal{M}_\Lambda^{\bm\alpha}(\SL(n,\C))$. Then the compactly
  supported Dolbeault cohomology of $\mathcal{M}^{\bm\alpha}$ is
  independent of $\bm\alpha$ as a $\Gamma_n$-module. Thus the
  $E$-polynomial $E(\mathcal{M}^{\bm\alpha})$ and the
  $\Gamma_n$-invariant $E$-polynomial
  $E(\mathcal{M}^{\bm\alpha})^{\Gamma_n}$ are both independent of
  $\bm\alpha$. \qed
\end{corollary}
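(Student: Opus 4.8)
The plan is to deduce this from the preceding proposition by a standard wall-crossing connectivity argument, reducing everything to the case of two chambers separated by a single wall. First I would record that within a single chamber there is nothing to prove: for two generic weights lying in the same chamber the stability condition is unchanged, so the moduli spaces are canonically isomorphic, and since the $\Gamma_n$-action \eqref{eq:Gamma_n action} is defined by the same formula regardless of $\bm\alpha$, this isomorphism is $\Gamma_n$-equivariant. Hence the $\Gamma_n$-module structure of $H^{p,q}_c(\mathcal{M}^{\bm\alpha})$ depends only on the chamber containing $\bm\alpha$, and it remains to compare different chambers.

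Next I would invoke connectivity of the weight space. Recall that $\mathcal{S}$ is a product of simplices, hence connected, and that the walls constitute a finite union of affine-hyperplane slices of $\mathcal{S}$, each of codimension one. Given two generic weights $\bm\alpha_0,\bm\alpha_1$, I would join them by a path in $\mathcal{S}$ and then perturb it slightly so that it avoids all pairwise (and higher) intersections of walls and meets each wall transversally in finitely many points. Such a generic path passes through a finite sequence of chambers in which consecutive chambers are separated by exactly one wall.

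I would then chain the isomorphisms supplied by the preceding proposition. At each wall crossing along the path, the proposition produces the $\Gamma_n$-equivariant isomorphism \eqref{eq:pm-pq-iso} coming from the $\Gamma_n$-equivariant isomorphism \eqref{eq:M-tilde-iso} of the blow-ups; this applies equally to $\mathcal{M}_d^{\bm\alpha}(\GL(n,\C))$ and $\mathcal{M}_\Lambda^{\bm\alpha}(\SL(n,\C))$. Composing these along the path yields a $\Gamma_n$-equivariant isomorphism $H^{p,q}_c(\mathcal{M}^{\bm\alpha_0})\cong H^{p,q}_c(\mathcal{M}^{\bm\alpha_1})$ of $\Gamma_n$-modules, proving the first assertion. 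The statement about $E$-polynomials is then immediate: $E(\mathcal{M}^{\bm\alpha})$ is determined by the numbers $\dim H^{p,q}_c(\mathcal{M}^{\bm\alpha})$ and $E(\mathcal{M}^{\bm\alpha})^{\Gamma_n}$ by the numbers $\dim H^{p,q}_c(\mathcal{M}^{\bm\alpha})^{\Gamma_n}$, both of which are invariants of the $\Gamma_n$-module and therefore independent of $\bm\alpha$.

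The genuine content is entirely in the preceding proposition; the only point requiring a little care here is the choice of connecting path, where I must ensure that it can be arranged to cross one wall at a time. This is guaranteed by the two facts established earlier --- that there are only finitely many walls and that each is cut out inside $\mathcal{S}$ by a single hyperplane --- so the codimension-two locus where two or more walls meet can be avoided after an arbitrarily small perturbation of the path.
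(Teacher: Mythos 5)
Your argument is correct and is essentially the paper's: the corollary is stated with only a \qed because it is the immediate consequence of the preceding proposition (the $\Gamma_n$-equivariance of the wall-crossing isomorphism \eqref{eq:pm-pq-iso}) combined with exactly the chaining-through-chambers argument you describe, using that the walls are finitely many hyperplane slices of the connected weight space $\mathcal{S}$. Your added care about perturbing the path to avoid wall intersections and meet one wall at a time is the right (implicit) justification.
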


\begin{remark}
  \label{rem:alpha-independent-PGLn}
  We shall see that the \emph{stringy} $E$-polynomial of the moduli
  space of parabolic $\PGL(n,\C)$-Higgs bundles is also independent of
  $\bm\alpha$. Indeed, it will follow from the description given in
  Theorem~\ref{thm:fixedpointlocus1} below that for any
  $e\neq\gamma\in\Gamma_n$, the parabolic Higgs bundles in the fixed
  locus
  $(\mathcal{M}^{\bm\alpha})^\gamma\subset\mathcal{M}^{\bm\alpha}$ are
  $\bm\alpha$-semistable for any value of $\bm\alpha$. In other words
  $(\mathcal{M}^{\bm\alpha})^\gamma$ does not intersect the flip locus
  $\mathcal{S}^\pm$ and thus $\tilde{g}$ from (\ref{eq:M-tilde-iso})
  restricts to a $\Gamma_n$-equivariant isomorphism. Thus all the terms
  in the definition \eqref{eq:stringyEpolynomial} of the stringy
  $E$-polynomial are independent of $\bm\alpha$.
\end{remark}

\subsection{Topological mirror symmetry and the main result}
\label{sec:topological-MS}

The topological mirror symmetry conjecture of Hausel--Thaddeus says
that the stringy $E$-polynomials of the mirror partners $\mathcal{M}$
and $\mathcal{M}/\Gamma_n$ should agree. Since the SYZ mirror symmetry
statement is really about the de Rham moduli spaces, rather than the
Dolbeault moduli spaces, so is the topological
mirror symmetry conjecture (see Remark~\ref{rem:SYZ-de-Rham-Dolbeault}). On the other hand, it is the rich
algebraic geometry of the Higgs bundle moduli spaces and, in
particular, the fact that it carries a $\C^*$-action which allows Hausel
and Thaddeus \cite{hausel-thaddeus:2003} to prove the equality of the
$E$-polynomials in the non-parabolic case. This suffices because they also prove that the de Rham and
Dolbeault moduli spaces have the same $E$-polynomials. The proof of
this latter result uses
that the Dolbeault and de Rham moduli spaces live in a family, the
\emph{Hodge moduli space}, which parametrizes so-called
$\lambda$-connections.  This moduli space fibers over the affine line
$\C$ with fibers away from zero all isomorphic to the de Rham moduli
space and degenerating to the Dolbeault moduli space over
zero. Parabolic $\lambda$-connections and the corresponding moduli
spaces were constructed and studied by Alfaya--Gómez
\cite{alfaya-gomez:2016}, and their results provide the necessary input
for applying the arguments of Hausel--Thaddeus
\cite[Sec.~6]{hausel-thaddeus:2003} (cf.\ Hausel--Rodriguez-Villegas
\cite[Cor.~1.3.3]{hausel-rodriguez-villegas:2015}) directly in the parabolic
situation. Thus the parabolic de Rham moduli spaces have the same
$E$-polynomials as the moduli spaces of parabolic Higgs bundles, and
we can exclusively work with the latter for the remainder of the paper.

The topological mirror symmetry conjecture can now be stated in
terms of the Higgs bundle moduli spaces as follows.
\begin{conjecture}[Hausel--Thaddeus \cite{hausel-thaddeus:2001,hausel-thaddeus:2003}]
\label{conj:conjecture1}
For any rank $n$, any line bundle $\Lambda$ and any system
of generic weights $\bm\a$, the equality of $E$-polynomials
\begin{equation}
  \label{eq:equalityE-pol-deg0-Dol}
  E(\cM^{\bm\alpha}_\Lambda)=E_{\mathrm{st}}(\cM^{\bm\alpha}_\Lambda/\Gn)
\end{equation}
holds.
\end{conjecture}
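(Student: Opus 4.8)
The plan is to follow the strategy of Hausel--Thaddeus \cite{hausel-thaddeus:2003}, reducing the full equality of $E$-polynomials to the comparison of two explicitly computable ``extra'' contributions. The first move is to split both sides according to the $\Gn$-action. Expanding the stringy $E$-polynomial \eqref{eq:stringyEpolynomial} and isolating the term for the identity $\gamma=e$, for which $\cM^e=\cM$ and $F(e)=0$ (Remark~\ref{rmk:F(e)=0}), gives
\[
E_{\mathrm{st}}(\cM/\Gn) = E(\cM)^{\Gn} + \sum_{e\neq\gamma\in\Gn} E(\cM^\gamma)^{\Gn}(uv)^{F(\gamma)}.
\]
On the other side I would write $E(\cM)=E(\cM)^{\Gn}+\big(E(\cM)-E(\cM)^{\Gn}\big)$, where the bracketed term is the \emph{variant part}. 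Subtracting the common invariant term $E(\cM)^{\Gn}$, the conjecture \eqref{eq:equalityE-pol-deg0-Dol} becomes equivalent to the single identity
\[
E(\cM)-E(\cM)^{\Gn} = \sum_{e\neq\gamma\in\Gn} E(\cM^\gamma)^{\Gn}(uv)^{F(\gamma)},
\]
matching the variant part of the $\SL(n,\C)$-side against the nontrivial fixed-point contributions of the $\PGL(n,\C)$-side.

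To compute the left-hand side I would exploit the $\C^*$-action \eqref{eq:C*-action} and the semiprojectivity established in Proposition~\ref{prop:C*-action-prop}. The associated Bialynicki--Birula / Morse stratification expresses $H^*_c(\cM)$ as a sum of shifted cohomologies of the $\C^*$-fixed loci, the shifts being governed by the weights of the downward flows. Since the $\Gn$-action commutes with the $\C^*$-action and preserves each fixed component, this decomposition is $\Gn$-equivariant, so the variant part of $E(\cM)$ is the corresponding sum of variant parts of the fixed loci. The $\C^*$-fixed loci are the Hodge bundles $V=V_1\oplus\cdots\oplus V_l$ with $\vp$ of weight one, indexed by the ranks $(n_1,\dots,n_l)$. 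By Corollary~\ref{cor:alpha-independent-E-poly} I may choose convenient generic weights for this computation. For $n=2,3$ one checks directly that only the loci of type $(1,1,\dots,1)$ carry variant cohomology, so the left-hand side reduces to the variant $E$-polynomial of the locus of Higgs bundles whose summands are all parabolic line bundles with $\vp$ given by the cyclic off-diagonal maps $L_i\to L_{i+1}\otimes K(D)$; this locus fibers over a product of symmetric powers and its variant part can be written down explicitly.

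For the right-hand side I would describe each fixed locus $\cM^\gamma$ using the theory of Narasimhan--Ramanan \cite{narasimhan-ramanan:1975}. A stable $(V,\vp)$ lies in $\cM^\gamma$ precisely when $V\otimes L_\gamma\cong V$ compatibly with $\vp$; the $n$-torsion line bundle $L_\gamma$ then defines a connected cyclic \'etale $n$-cover $\pi_\gamma\colon X_\gamma\to X$, and such $(V,\vp)$ arise by pushing forward line bundles from $X_\gamma$. As noted in the introduction, in the parabolic setting the Galois action can be absorbed into the parabolic data, so $\cM^\gamma$ is identified with (a quotient of) the Prym variety $\Prym_{\pi_\gamma}(X_\gamma)$, for which the invariant $E$-polynomial $E(\cM^\gamma)^{\Gn}$ is computable from the $\Gn$-module structure of $H^*(\Prym_{\pi_\gamma}(X_\gamma))$. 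I would then compute the fermionic shift $F(\gamma)$, using \eqref{eq:fermionic=rkN/2} to reduce it to half the normal-bundle dimension, and sum over the nontrivial $\gamma\in\Gn$, organising the sum by the isomorphism type of the cover.

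The final step is to verify that the two sums produced above are equal, which amounts to an explicit, essentially number-theoretic identity between the variant contribution of the type $(1,\dots,1)$ Hodge bundles and the Prym contributions of the \'etale covers; here the self-duality of $\Prym_\pi(X_s)$ and $\Prym_\pi(X_s)/\Gn$ recorded before Theorem~\ref{thm:SYZmirror} is the structural reason the two sides should line up. The main obstacle is the input to the second paragraph: proving that \emph{only} the type $(1,\dots,1)$ $\C^*$-fixed loci contribute to the variant part. This holds by inspection for $n=2,3$ but is exactly the step not known for higher prime rank, and it is the reason the theorem is established only for $n\le 3$ even though the remaining computations go through for every prime $n$.
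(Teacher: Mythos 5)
Your proposal follows essentially the same route as the paper: reduce \eqref{eq:equalityE-pol-deg0-Dol} to the identity $E(\cM)^{\var}=\sum_{\gamma\neq e}E(\cM^\gamma)^{\Gn}(uv)^{F(\gamma)}$ by cancelling the invariant part, compute the left side via the Bialynicki--Birula decomposition and the reduction (known only for $n=2,3$) to the type $(1,\dots,1)$ fixed loci, compute the right side via the Narasimhan--Ramanan description of $\cM^\gamma$ as pushforwards from the \'etale cover $X_\gamma$ with the Galois action absorbed into the parabolic data, and match the two explicit polynomials. You also correctly identify the single missing ingredient for general prime $n$ (that only the $(1,\dots,1)$ loci contribute to the variant part), which is exactly the paper's Theorem~\ref{thm:varpartn23}.
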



Our main result states that this is true for $n=2,3$.

\begin{theorem}\label{thm:main1}
If $n=2,3$, then Conjecture \ref{conj:conjecture1} holds.
\end{theorem}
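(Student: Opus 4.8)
The plan is to reduce the asserted equality to a single identity comparing a ``variant'' contribution on the $\SL(n,\C)$-side with the nontrivial torsion contributions on the $\PGL(n,\C)$-side, and then to compute both sides in closed form and match them. Write $E(\cM)^{\var}=E(\cM)-E(\cM)^{\Gn}$ for the variant part of the $E$-polynomial. Separating off the $\gamma=e$ term in the stringy $E$-polynomial \eqref{eq:stringyEpolynomial}, which equals $E(\cM)^{\Gn}$ since $\cM^e=\cM$ and $F(e)=0$ (Remark \ref{rmk:F(e)=0}), the conjectured identity $E(\cM)=E_{\mathrm{st}}(\cM/\Gn)$ becomes equivalent to
\[
E(\cM)^{\var}=\sum_{e\neq\gamma\in\Gn}E(\cM^\gamma)^{\Gn}(uv)^{F(\gamma)}.
\]
By Corollary \ref{cor:alpha-independent-E-poly} every term here is independent of the generic weights $\bm\alpha$, so we are free to make convenient choices of weights throughout.

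First I would compute the left-hand side using the $\C^*$-action \eqref{eq:C*-action}. Since $\cM$ is smooth and semiprojective (Proposition \ref{prop:C*-action-prop}), the Bialynicki--Birula stratification expresses its compactly supported cohomology in terms of that of the $\C^*$-fixed loci, which are the Hodge bundles $V=V_1\oplus\cdots\oplus V_l$ on which $\vp$ has weight one. As the $\Gn$-action commutes with the $\C^*$-action, this decomposition is $\Gn$-equivariant, and the variant part $E(\cM)^{\var}$ splits as a sum over the types $(\rk V_1,\ldots,\rk V_l)$ of the fixed loci. The decisive input, valid precisely for $n=2,3$, is that only the fixed loci of type $(1,1,\ldots,1)$ carry a nonzero variant part; granting this, their combined contribution is written down explicitly in Section \ref{sec:(11...1)} (Proposition \ref{prop:E-pol-variant-part}).

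Next I would treat the right-hand side. For $e\neq\gamma\in\Gn$ a fixed point $(V,\vp)\in\cM^\gamma$ satisfies $V\otimes L_\gamma\cong V$ compatibly with $\vp$ and the flags, and by the Narasimhan--Ramanan analysis this forces $V$ to arise as a pushforward from the unramified cyclic cover $\widetilde X\to X$ determined by the $n$-torsion line bundle $L_\gamma$. Hence $\cM^\gamma$ is described, via Theorem \ref{thm:fixedpointlocus1}, in terms of the Prym variety of this cover modulo the residual Galois action, which in the parabolic setting can be absorbed into the parabolic data. Using the classical facts recalled in Section \ref{sec:unramified-Norm,Prym,actions}, Section \ref{sec:stringy E-pol} then evaluates $E(\cM^\gamma)^{\Gn}$ together with the fermionic shift $F(\gamma)=\tfrac12\dim N_p\cM^\gamma$ from \eqref{eq:fermionic=rkN/2}, and sums over the nontrivial $\gamma\in\Gn\cong\Z_n^{2g}$.

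The final step is to verify that the two closed-form expressions agree for $n=2,3$. Here the $(uv)^{F(\gamma)}$ twist furnished by the fermionic shift is exactly what aligns the Prym contributions on the $\PGL$-side with the type $(1,\ldots,1)$ contributions on the $\SL$-side, while the count of nontrivial $\gamma$ matches the combinatorics of the fixed strata. The hard part is twofold: proving the vanishing of the variant part for every fixed locus of type other than $(1,\ldots,1)$ --- the one ingredient available only for $n=2,3$ and the sole obstruction to treating arbitrary prime rank --- and then carrying out the delicate term-by-term comparison in which the fermionic shift and the structure of the Prym varieties must conspire precisely.
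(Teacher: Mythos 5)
Your proposal follows exactly the paper's strategy: the same reduction of the conjectured equality to $E(\cM)^{\var}=\sum_{\gamma\neq e}E(\cM^\gamma)^{\Gn}(uv)^{F(\gamma)}$, the same appeal to the $n=2,3$ vanishing of the variant part outside type $(1,\ldots,1)$ (Theorem~\ref{thm:varpartn23}), and the same two independent closed-form computations via Proposition~\ref{prop:E-pol-variant-part} and Proposition~\ref{prop:stringyE-poly}. This matches the paper's proof, which simply cites Theorems~\ref{thm:varpartn23} and~\ref{thm:main2} after the reduction.
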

\proof 
We follow the strategy of \cite{hausel-thaddeus:2003} which we now explain.
From the definition of the stringy $E$-polynomial \eqref{eq:stringyEpolynomial} of $\cM/\Gn$ and from Remark \ref{rmk:F(e)=0}, we have that
\[
E_{\mathrm{st}}(\cM/\Gn)=E(\cM)^{\Gn}+\sum_{\gamma\neq e}E(\cM^\gamma)^{\Gn}(uv)^{F(\gamma)}.
\] 
On the other hand, let $E(\cM)^\var$ denote the \emph{variant} part of
$E(\cM)$ in Hausel and Thaddeus' terminology. It is defined
analogously to $E(\cM)$ but the coefficients are given by subtracting
the dimensions of the $\Gn$-invariant subspaces, i.e.,
\[
E(\cM)=E(\cM)^{\Gn}+E(\cM)^\var.
\]
Hence \eqref{eq:equalityE-pol-deg0-Dol} is equivalent to
\begin{equation}\label{eq:conjecture2}
E(\cM)^\var=\sum_{\gamma\neq e}E(\cM^\gamma)^{\Gn}(uv)^{F(\gamma)}.
\end{equation}
Now, Theorems \ref{thm:varpartn23} and \ref{thm:main2} below imply that \eqref{eq:conjecture2} holds for any $n=2,3$, proving Theorem \ref{thm:main1}.
\endproof


Thus the following two theorems complete the proof of Theorem \ref{thm:main1}. Here $\cF_{(1,1,\ldots,1)}$ denotes the subspace of $\cM$ consisting of subvarieties of fixed points of the $\C^*$-action \eqref{eq:C*-action} of type $(1,1,\ldots,1)$, to be properly defined in the following section (see in particular \eqref{eq:M=affinebundlefixed}), and $E(\cF_{(1,1,\ldots,1)})^\var$ is the variant part of the corresponding $E$-polynomial.

\begin{theorem}\label{thm:varpartn23}
Let $n=2,3$. For any system of generic weights $\bm\a$, and any line bundle
$\Lambda$, we have $E(\cM)^\var=(uv)^{\dim(\cM)/2}E(\cF_{(1,1,\ldots,1)})^\var$.
\end{theorem}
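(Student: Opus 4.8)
The plan is to exploit the $\C^*$-action \eqref{eq:C*-action} together with its Bialynicki--Birula decomposition, keeping track of the commuting $\Gn$-action throughout. First I would use Proposition~\ref{prop:C*-action-prop}: since $\cM$ is smooth and semiprojective, the fixed locus $\cM^{\C^*}$ is a disjoint union of smooth compact components $F$, and $\cM=\bigsqcup_F\cM_F^+$ where $\cM_F^+=\{x:\lim_{t\to0}t\cdot x\in F\}$ is the attracting set of $F$. By the Bialynicki--Birula theorem each $\cM_F^+$ is a Zariski-locally trivial affine bundle over $F$ whose fibre is the positive-weight part of the normal space, of rank $d_F:=\dim T^{(>0)}$ (here $T^{(a)}$ is the weight-$a$ subspace of $T_x\cM$ for $x\in F$, and all weights are integers). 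Since an affine bundle of rank $r$ multiplies the compactly supported $E$-polynomial by $(uv)^r$, additivity of $E$ over the stratification gives $E(\cM)=\sum_F(uv)^{d_F}E(F)$.

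The key simplification is that $d_F=\dim(\cM)/2$ for \emph{every} component. Indeed, $\cM$ carries a holomorphic symplectic form $\Omega$ which the $\C^*$-action scales with weight one, $t^*\Omega=t\,\Omega$ (because $\Omega$ pairs deformations of $V$ against deformations of $\vp$, and $\vp$ scales linearly). Hence at a fixed point $T^{(a)}$ is paired nondegenerately with $T^{(1-a)}$, so $\dim T^{(a)}=\dim T^{(1-a)}$. Writing $d^0,d^+,d^-$ for the dimensions of the zero, positive and negative weight parts, the pairings $T^{(1)}\leftrightarrow T^{(0)}$ and $T^{(\geq2)}\leftrightarrow T^{(\leq-1)}$ give $d^+=d^0+d^-$, and combined with $d^++d^0+d^-=\dim(\cM)$ this forces $d^+=\dim(\cM)/2$ uniformly (this is the mechanism behind the affine-bundle description \eqref{eq:M=affinebundlefixed}). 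Therefore
\[
E(\cM)=(uv)^{\dim(\cM)/2}\,E(\cM^{\C^*})=(uv)^{\dim(\cM)/2}\sum_F E(F).
\]

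Next I would make the decomposition $\Gn$-equivariant. The actions of $\Gn$ and $\C^*$ commute, since tensoring by $L_\gamma$ (see \eqref{eq:Gamma_n action}) does not affect the scaling of $\vp$; hence $\Gn$ preserves $\cM^{\C^*}$ and its attracting sets, and the affine-bundle trivializations can be chosen $\Gn$-equivariantly. Using purity of the cohomology (in the $\Gn$-equivariant form underlying Corollary~\ref{cor:alpha-independent-E-poly}), the induced isomorphisms on $H_c^*$ are $\Gn$-equivariant, so taking $\Gn$-variant parts commutes with the decomposition:
\[
E(\cM)^\var=(uv)^{\dim(\cM)/2}\sum_F E(F)^\var.
\]
It then remains to prove $E(F)^\var=0$ for every component $F$ of type $(n_1,\ldots,n_l)$ with some $n_i>1$; this yields $\sum_FE(F)^\var=E(\cF_{(1,1,\ldots,1)})^\var$ and hence the theorem.

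This last vanishing is the main obstacle, and it is precisely the step confining us to $n=2,3$. For the type-$(n)$ locus $\vp=0$, which is the moduli space $\cN$ of parabolic bundles, $E(\cN)^\var=0$ amounts to triviality of the $\Gn$-action on $H_c^*(\cN)$, which I would invoke from the parabolic analogue of the classical results for moduli of bundles of rank $\geq2$ with fixed determinant. For $n=3$ the remaining types are $(2,1)$ and $(1,2)$: here I would describe each component explicitly as a moduli space of holomorphic chains of the form $W\to L\otimes K(D)$, with a rank-two factor $W$ and a line-bundle factor constrained by $\det=\Lambda$, exhibit it $\Gn$-equivariantly as fibred over an abelian variety with a rank-two moduli factor, and deduce from triviality of the $\Gn$-action on the cohomology of that rank-two factor that the variant part vanishes. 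The essential point --- available for $n=2,3$ but open in higher prime rank --- is that every non-$(1,\ldots,1)$ type contains a rigid factor of rank $\geq2$ on which $\Gn$ acts trivially in cohomology, killing the variant contribution; only the fully split type $(1,\ldots,1)$, built purely from line bundles with $\Gn$ acting by translations on the Prym factor, survives.
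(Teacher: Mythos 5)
Your reduction is essentially the paper's: the Bialynicki--Birula decomposition with all attracting sets of rank $\dim(\cM)/2$ is exactly what underlies \eqref{eq:M=affinebundlefixed}, your weight-pairing argument via the holomorphic symplectic form is the standard justification for that uniform rank, and the $\Gn$-equivariance of the decomposition (using that $\Gn$ commutes with \eqref{eq:C*-action} and preserves each fixed component, together with purity) correctly lets you pass to variant parts. So the theorem reduces, as in the paper, to the statement that $E(F)^{\var}=0$ for every $\C^*$-fixed component $F$ of type other than $(1,\ldots,1)$.

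That vanishing, however, is the entire mathematical content of the theorem, and it is the one step you do not actually prove. For the type-$(n)$ locus $\cN$ your appeal to triviality of the $\Gn$-action on $H^*(\cN)$ is fine and is precisely what the paper extracts for $n=2$ from Holla, Nitsure and the Atiyah--Bott argument. For $n=3$, though, your treatment of the $(2,1)$ and $(1,2)$ components --- ``exhibit it $\Gn$-equivariantly as fibred over an abelian variety with a rank-two moduli factor and deduce triviality of the action'' --- is a plausible heuristic but not an argument: these components are moduli spaces of parabolic triples/chains, they do not in general fibre over an abelian variety with a rank-two factor in any obvious $\Gn$-equivariant way, and establishing that $\Gn$ acts trivially on their cohomology is a genuinely nontrivial computation. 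The paper does not reprove it either; it cites \cite[Theorem 12.22]{garcia-prada-gothen-munoz:2007} (and Remarks 12.17, 12.19 there), where this is carried out. Your proof would be complete if you replaced the final sketch by those citations; as written, the $n=3$ case has a gap exactly at the point the paper flags as the obstruction to extending the result to higher prime rank.
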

\proof
For $n=2$, this follows from comparing Holla \cite[Theorem 5.23]{holla:2000} and Nitsure \cite[Proposition 3.11]{nitsure:1986} and using the argument of Atiyah--Bott \cite[Prop.~9.7]{atiyah-bott:1982}; see also \cite[Remark 3.11]{nitsure:1986} and \cite[Remark 10.1]{garcia-prada-gothen-munoz:2007}. For $n=3$, it follows from \cite[Theorem 12.22]{garcia-prada-gothen-munoz:2007}; see also Remarks 12.17 and 12.19 of loc. cit..
\endproof

We shall say that a result \emph{holds for small weights} if there is
an $\epsilon>0$ such that the result holds for any system of weights
$\bm\a$ with $\alpha_i(p)<\epsilon$ for all $i$ and $p$. 

\begin{theorem}\label{thm:main2}
For any $n$ prime, any system of small generic weights $\bm\a$, and any line bundle
$\Lambda$, we have
\begin{equation}\label{eq:polynom0}
(uv)^{\dim(\cM)/2}E(\cF_{(1,1,\ldots,1)})^\var=\sum_{\gamma\neq e}E(\cM^\gamma)^{\Gn}(uv)^{F(\gamma)}
\end{equation}
and both sides are equal to
\begin{equation}\label{eq:polynom}
\frac{n^{2g}-1}{n}(n!)^{|D|}(uv)^{(n^2-1)(g-1)+|D|n(n-1)/2}((1-u)(1-v))^{(n-1)(g-1)}.
\end{equation}
Moreover, if $n=2,3$, the result holds for any system of generic weights.
\end{theorem}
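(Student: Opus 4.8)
The plan is to prove the two equalities in \eqref{eq:polynom0} separately, by showing that each of the two sides equals the explicit polynomial \eqref{eq:polynom}; the equality between the two sides is then immediate. These are exactly the two independent computations carried out in Sections~\ref{sec:(11...1)} and~\ref{sec:stringy E-pol}, and I would organize the argument around them, treating the $\PGL(n,\C)$ (right-hand) side and the $\SL(n,\C)$ (left-hand) side in turn.

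For the right-hand side I would fix $e\neq\gamma\in\Gn$; since $n$ is prime, $\gamma$ has order exactly $n$, so $L_\gamma$ determines an étale cyclic $n$-cover $p_\gamma\colon\tilde X_\gamma\to X$. Using the Narasimhan--Ramanan description of the fixed locus (Theorem~\ref{thm:fixedpointlocus1}), a strongly parabolic Higgs bundle lies in $\cM^\gamma$ exactly when its underlying bundle is a pushforward $p_{\gamma*}N$ with $\det(p_{\gamma*}N)\cong\Lambda$, together with Higgs data descending from $\tilde X_\gamma$. The determinant condition places $N$ in a torsor over $\Prym_{p_\gamma}(\tilde X_\gamma)$, an abelian variety of dimension $g_\gamma-g=(n-1)(g-1)$. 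Up to the parabolic and Galois bookkeeping, $\cM^\gamma$ is the total space of a Higgs-type (cotangent) fibration over this torsor, so that $E(\cM^\gamma)^{\Gn}$ is a constant multiple of $((1-u)(1-v))^{(n-1)(g-1)}(uv)^{(n-1)(g-1)}$; combined with the fermionic shift $F(\gamma)=\dim_\C(N_p\cM^\gamma)/2$ from \eqref{eq:fermionic=rkN/2} the total power of $uv$ becomes precisely $\dim(\cM)/2$. Over each $p\in D$ the cover has $n$ distinct preimages, and ordering the corresponding lines in the full flag yields the factor $(n!)^{|D|}$, while the residual Galois $\Z_n$-action, which (as noted in the introduction) can be absorbed into the parabolic data, accounts for the factor $1/n$. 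Summing over the $n^{2g}-1$ non-trivial $\gamma$ then gives \eqref{eq:polynom}.

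For the left-hand side I would describe $\cF_{(1,\ldots,1)}$ as the locus of Hodge bundles $V=L_1\oplus\cdots\oplus L_n$ with weight-one chain Higgs field $\vp_i\colon L_i\to L_{i+1}\otimes K(D)$. For small generic weights, stability forces every $\vp_i\neq0$ and pins down the degrees, so that $\cF_{(1,\ldots,1)}$ is a fibration whose base records the divisors $\divisor(\vp_i)$ (symmetric products) and whose fibre, after imposing $\det V\cong\Lambda$, is a $\Gn$-torsor; the flag orderings over $D$ again contribute $(n!)^{|D|}$. Since the $\Gn$-action \eqref{eq:Gamma_n action} is by translating the $L_i$ by $L_\gamma$, I would decompose $H^*_c(\cF_{(1,\ldots,1)})$ into $\Gn$-isotypic pieces indexed by characters, subtract the invariant part, and use Abel--Jacobi together with a vanishing argument for the non-trivial characters on the projective-fibre (symmetric-product) factors. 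This localizes the variant cohomology onto an abelian-variety contribution of dimension $(n-1)(g-1)$, yielding $E(\cF_{(1,\ldots,1)})^\var=\tfrac{n^{2g}-1}{n}(n!)^{|D|}((1-u)(1-v))^{(n-1)(g-1)}$; multiplying by $(uv)^{\dim(\cM)/2}$ recovers \eqref{eq:polynom}.

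Finally, for the stronger assertion when $n=2,3$, I would remove the small-weight hypothesis using weight-independence: the right-hand side is independent of generic $\bm\alpha$ by Remark~\ref{rem:alpha-independent-PGLn}, and by Theorem~\ref{thm:varpartn23} the left-hand side equals $E(\cM)^\var$, which is independent of generic $\bm\alpha$ by Corollary~\ref{cor:alpha-independent-E-poly}; hence both sides agree with their small-weight value for every generic system of weights. The main obstacle is the variant-part extraction on the $\SL(n,\C)$ side, namely establishing the precise $\Gn$-module structure of $H^*_c(\cF_{(1,\ldots,1)})$ and the vanishing of the non-trivial characters' contributions on the symmetric-product factors, which is exactly what produces the delicate rational coefficient $\tfrac{n^{2g}-1}{n}$ rather than $n^{2g}-1$. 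Once both sides have been shown to equal \eqref{eq:polynom}, matching them is automatic.
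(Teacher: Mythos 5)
Your overall strategy is exactly the paper's: show independently that each side of \eqref{eq:polynom0} equals \eqref{eq:polynom} (this is the content of Propositions~\ref{prop:E-pol-variant-part} and~\ref{prop:stringyE-poly}), and remove the small-weight hypothesis for $n=2,3$ by combining Theorem~\ref{thm:varpartn23} with Corollary~\ref{cor:alpha-independent-E-poly} — that last step you have exactly right. However, two steps in your outline are asserted rather than argued, and they are where the real mathematical content lies. On the $\PGL(n,\C)$ side, it is not enough to say that the determinant condition places $N$ in ``a torsor over $\Prym_{p_\gamma}(\tilde X_\gamma)$'': for an \emph{unramified} cover the kernel of the norm map has $n$ connected components, each a Prym torsor, so a priori $E(\cM^\gamma)^{\Gn}$ could involve the cohomology of all $n$ components and come out $n$ times too large. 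The paper's Proposition~\ref{prop:Gamma-invpart} shows $H^*(\Nm_\pi^{-1}(\Lambda))^{\Gn}\cong H^*(\Prym_\pi(X_\gamma))$ only after proving, via the Weil pairing (Proposition~\ref{prop:Weilpairing-degree}), that a cyclic subgroup of $\Gn$ permutes the $n$ components transitively while the complementary part acts by translations on each component, hence trivially on cohomology. Your sketch contains no substitute for this analysis.

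On the $\SL(n,\C)$ side your bookkeeping of the coefficient $\tfrac{n^{2g}-1}{n}$ conflates two different mechanisms. The factor $n^{2g}-1$ comes from Proposition~\ref{prop:variant111} (the variant cohomology of each component is $\bigoplus_{\gamma\neq e}\bigotimes_j\Lambda^{m_j}H^1(X,L_\gamma^j)$ — this is your ``vanishing for non-trivial characters on the symmetric-product factors''). The factor $1/n$ comes from somewhere else entirely: the fixed-determinant congruence \eqref{eq:constrain on degMj} on the degrees $m_j$, implemented by a roots-of-unity filter over $l=0,\dots,n-1$; the cross terms $l\neq 0$ in that filter, summed over the flag orderings $\varpi_n\in S_n^{|D|}$, vanish only because of the combinatorial identity $\cS(n,d)=(n-1)!$ (Lemma~\ref{lemma:combinatorialweights}), which requires a separate, non-obvious argument. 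Relatedly, small weights are not used to ``pin down the degrees'' (the $\vp_i\neq 0$ is part of the definition of type $(1,\dots,1)$, and the degrees genuinely vary); they are used to guarantee that every tuple $0\leq m_j\leq 2g-2$ satisfying the congruence also satisfies the stability inequality \eqref{eq:stab cond-VHS}, so that the sum can be taken over the full range and the roots-of-unity filter applies cleanly. Until these two points are supplied, the claimed equality of both sides with \eqref{eq:polynom} is not established.
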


\begin{remark}
When $n=2$, the polynomial \eqref{eq:polynom} is equivalent to the one which
appears in \cite{hausel-thaddeus:2001}, the difference in sign
being due to different conventions.
\end{remark}

\begin{remark}
  We shall conclude directly in Section \ref{sec:stringy E-pol} that
  the right-hand side of \eqref{eq:polynom0} is independent of the
  generic weights (see also Remark \ref{rem:alpha-independent-PGLn}),
  so that the assumption on small weights is only needed to compute
  $E(\cF_{(1,1,\ldots,1)})^\var$. Specifically, it is  used in the proof
  of Proposition \ref{prop:E-pol-variant-part} below. Now, it follows
  from Corollary \ref{cor:alpha-independent-E-poly} that the
  polynomial $E(\cM)^\var$ is independent of the generic weights, so
  if Theorem \ref{thm:varpartn23} were known to be true for any prime
  $n$, then we could remove this small weights assumption from Theorem
  \ref{thm:main2}. The only obstacle for having a proof of Theorem
  \ref{thm:main1} for any $n$ prime is thus the fact that Theorem
  \ref{thm:varpartn23} is not known to hold for such $n$.
\end{remark}

The remaining part of the paper will be dedicated to the proof of
Theorem \ref{thm:main2}, which follows from Proposition
\ref{prop:E-pol-variant-part}, Corollary
\ref{cor:E-pol-variant-part-n23} and Proposition
\ref{prop:stringyE-poly} below.  Again we follow the arguments of
\cite{hausel-thaddeus:2003}. We shall prove that both
$(uv)^{\dim(\cM)/2}E(\cF_{(1,1,\ldots,1)})^\var$ and
$\sum_{\gamma\neq e}E(\cM^\gamma)^{\Gn}(uv)^{F(\gamma)}$ are equal to
the given polynomial (in the former case for small weights if
$n>3$). The proofs of these equalities are completely independent of
each other. The case of
$(uv)^{\dim(\cM)/2}E(\cF_{(1,1,\ldots,1)})^\var$ will be treated in
Section \ref{sec:(11...1)}, while the case of
$\sum_{\gamma\neq e}E(\cM^\gamma)^{\Gn}(uv)^{F(\gamma)}$ is going to
be dealt with in Section \ref{sec:stringy E-pol}. Section
\ref{sec:unramified-Norm,Prym,actions} is an independent section,
containing some results on Prym varieties of unramified covers, which
are needed in Section \ref{sec:stringy E-pol}.


\section{The polynomial $(uv)^{\dim(\cM)/2}E(\cF_{(1,1,\ldots,1)})^\var$}\label{sec:(11...1)}


The $\C^*$-action \eqref{eq:C*-action} on the moduli space $\cM$ is a
fundamental tool on the study of its geometry and topology. In
particular the cohomology of $\cM$ is completely determined by the
cohomology of the subvarieties of fixed points, hence so is the
$E$-polynomial of $\cM$. Here we aim to compute the $E$-polynomial of a certain subspace of the fixed point loci of the $\C^*$-action,
relevant for Theorem \ref{thm:main2}. In the next subsections, we describe these fixed point locus.

\subsection{The fixed points of the $\C^*$-action}

Here we shall consider the fixed point subvarieties of the
$\C^*$-action \eqref{eq:C*-action}. From Proposition
\ref{prop:C*-action-prop} we know that these are compact, but now we
need a more explicit description of the fixed points. This is provided
by the following result due to Simpson (see \cite[Theorem
8]{simpson:1990}).

\begin{proposition}\label{prop:fixedpoints-Simpson}
A stable parabolic $\SL(n,\C)$-Higgs bundle $(V,\vp)\in\cM$ is a fixed point under $\C^*$ if and only if either
\begin{enumerate}
\item $\vp\equiv 0$, or
\item $V$ admits a decomposition $V\cong \bigoplus_{j=1}^lV_j$ such that the following hold:
\begin{itemize}
\item the subbundles $V_j$ are parabolic and the decomposition $V\cong \bigoplus_{j=1}^lV_j$ is compatible with the parabolic structure, i.e., at every point $p\in D$, every subspace $V_{p,i}$ is a direct sum of fibers at $p$ of certain subbundles $V_j$. 
\item the Higgs field splits as $\vp=\sum_{j=1}^l\vp_j$, with $\vp_j:V_j\to V_{j+1}\otimes K(D)$ non-zero for all $j=1,\ldots,l-1$, and $\vp_l\equiv 0$. 
\end{itemize}
\end{enumerate}
\end{proposition}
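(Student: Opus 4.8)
The plan is to prove both implications, the substantive content being the \emph{only if} direction, which I would carry out following Simpson's argument (whose non-parabolic prototype is due to Hitchin). The \emph{if} direction is a direct construction. Given $(V,\vp)$ as in case (2), define an algebraic $\C^*$-action on the total space of $V$ by letting $t\in\C^*$ act as multiplication by $t^{-j}$ on the summand $V_j$; call this bundle automorphism $\rho(t)$. Because the decomposition $V\cong\bigoplus_j V_j$ is compatible with the flags, each $\rho(t)$ preserves the parabolic structure, and since $\vp$ sends $V_j$ into $V_{j+1}\otimes K(D)$ one computes $\rho(t)\,\vp\,\rho(t)^{-1}=t\,\vp$. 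Thus $\rho(t)\colon(V,t\vp)\xra{\cong}(V,\vp)$ is an isomorphism of parabolic Higgs bundles for every $t$, so $(V,\vp)$ is a fixed point. Case (1) is immediate since $t\cdot(V,0)=(V,0)$.

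For the converse, assume $(V,\vp)\in\cM$ is stable, fixed, and $\vp\not\equiv 0$. Stability implies that $(V,\vp)$ is simple, so its only parabolic Higgs automorphisms are the scalars $\C^*\cdot\Id$. For each $t\in\C^*$ the fixed-point condition provides an isomorphism $g_t\colon(V,\vp)\xra{\cong}(V,t\vp)$, that is, a parabolic automorphism of $V$ with $g_t\,\vp\,g_t^{-1}=t\,\vp$; by simplicity $g_t$ is unique up to scalar. The resulting assignment $t\mapsto g_t$ is an algebraic one-parameter subgroup of $\Aut(V)/\C^*$, and I would lift it to a genuine algebraic action $\rho\colon\C^*\to\Aut(V)$ by parabolic bundle automorphisms still satisfying $\rho(t)\,\vp\,\rho(t)^{-1}=t\,\vp$, adjusting by a character of $\C^*$ and using $\pi_1(\C^*)=\Z$ to resolve the scalar ambiguity.

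Decomposing $V$ into the weight subbundles $V_m=\{v\mid \rho(t)v=t^{m}v\}$ of this action yields $V\cong\bigoplus_m V_m$, a decomposition by holomorphic subbundles since the weights are integers. The intertwining relation forces $\vp(V_m)\subseteq V_{m+1}\otimes K(D)$, so $\vp$ has weight one. As each $\rho(t)$ preserves every flag subspace $V_{p,i}$, these subspaces are the direct sums of their intersections with the fibers $(V_m)_p$, which is exactly the asserted compatibility of the decomposition with the parabolic structure. After discarding zero summands and relabelling the (necessarily consecutive) nonzero ones as $V_1,\dots,V_l$, simplicity of $(V,\vp)$ rules out any vanishing connecting map $\vp_j\colon V_j\to V_{j+1}\otimes K(D)$ with $j<l$: were some $\vp_j$ zero, the Higgs bundle would split as a direct sum of two parabolic Higgs subbundles, contradicting indecomposability. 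The top map $\vp_l$ is zero since there is no $V_{l+1}$. This gives precisely case (2).

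The main obstacle is the construction of the honest algebraic $\C^*$-action $\rho$ on $V$ out of the family $(g_t)$: one must pin down the scalar ambiguity and lift the projective one-parameter subgroup to an actual subgroup of $\Aut(V)$, which is what makes the weights of $\rho$ integral and hence produces an \emph{algebraic subbundle} decomposition rather than a merely pointwise eigenspace splitting. The remaining verifications—the parabolicity of each $\rho(t)$ and the nonvanishing of the intermediate connecting maps—are then formal consequences of stability and of the definition of a parabolic automorphism.
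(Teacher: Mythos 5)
Your proof is correct and is essentially the standard argument of Simpson (Theorem 8 of \cite{simpson:1990}, going back to Hitchin), which is precisely what the paper cites in lieu of giving a proof. The only quibble is a harmless convention slip in the easy direction: letting $\rho(t)$ act as $t^{-j}$ on $V_j$ yields $\rho(t)\,\vp\,\rho(t)^{-1}=t^{-1}\vp$ rather than $t\vp$, so either use $t^{j}$ instead or observe that $(V,t^{-1}\vp)\cong(V,\vp)$ for all $t\in\C^*$ is equivalent to being a fixed point.
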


A parabolic Higgs bundle of the kind described in the preceding
proposition is called a \emph{Hodge bundle}. Note that we can
include the ones of the form $(V,0)$ in point (2) by taking $l=1$,
however it will be convenient for us to distinguish the two kinds of
fixed points notationally.
 
\begin{definition}
 A fixed point with non-vanishing Higgs field is said to be of \emph{type $(n_1,n_2,\ldots,n_l)$}, with $\sum n_j=n$, if $\rk(V_j)=n_j$, for all $j$. Denote by $\cF_{(n_1,n_2,\ldots,n_l)}$ the union of the subvarieties of $\cM$ of all fixed points of type $(n_1,n_2,\ldots,n_l)$.
\end{definition}

As is well known, it follows from Bialynicki-Birula stratification
associated to the $\C^*$-action that the cohomology of $\cM$ is
determined by the cohomology of all fixed point subvarieties of the
$\C^*$-action. Indeed, the $\C^*$-flows gives rise to Zariski
locally trivial affine bundles, with fibre $\C^{\dim(\cM)/2}$, over
the disjoint union of all $\cF_{(n_1,n_2,\ldots,n_l)}$ together with
$\cN$. This follows by Proposition \ref{prop:C*-action-prop} (1), and
the projection of these affine bundles is just taking the limit of the
flow when $t$ goes to $0$.  Since the $E$-polynomial is additive with
respect to disjoint unions and multiplicative with respect to locally
trivial fibrations in the Zariski topology, we consequently have that
\begin{equation}\label{eq:M=affinebundlefixed}
E(\cM)=(uv)^{\dim(\cM)/2}\left(E(\cN)+\sum_{(n_1,n_2,\ldots,n_l)}E(\cF_{(n_1,n_2,\ldots,n_l)})\right).
\end{equation}
All $\cF_{(n_1,n_2,\ldots,n_l)}$ and $\cN$ are smooth and projective
so we can consider their usual $E$-polynomials.

According to Theorem \ref{thm:main2}, the relevant subvarieties to be considered are the ones corresponding to type $(1,1,\ldots,1)$, that is
$\cF_{(1,1,\ldots,1)}$.

\subsection{The subvarieties $\cF_{(1,1,\ldots,1)}$}

Let $n$ be a prime number.  Our next task is to obtain a geometric
description of the subspace $\cF_{(1,1,\ldots,1)}$. If $(V,\vp)$
represents a fixed point of the $\C^*$-action of type $(1,1,\ldots,1)$
then
\begin{equation}\label{eq:11...1}
V=\bigoplus_{j=1}^n L_j\qquad\text{ and }\qquad \vp=\sum_{j=1}^{n-1}\vp_j,\quad \vp_j:L_j\to L_{j+1}\otimes K(D),\ \vp_n\equiv 0.
\end{equation}
Since in $\cM$ we always have fixed determinant $\Lambda$, then 
\begin{equation}\label{eq:MHS-fixed det}
\prod L_j\cong\Lambda.
\end{equation} The subspace $\cF_{(1,1,\ldots,1)}$ is decomposed into connected components which can be labeled by the topological data coming from decomposition \eqref{eq:11...1}, namely the degrees of the bundles $L_j$ and the way the weights are distributed among them at each point of $D$. Actually, instead of using the degrees of the bundles $L_j$, we shall opt for a slight variation of this.

Over each $p\in D$, we have the corresponding parabolic structure
\begin{equation}\label{eq:filtration-11...1}
V_p=V_{p,1} \supsetneq V_{p,2} \supsetneq \cdots \supsetneq V_{p,n}\supsetneq \{0\},
\qquad 0\leq \a_1(p)< \cdots <\a_n(p) < 1.
\end{equation}
By Proposition \ref{prop:fixedpoints-Simpson}, each $L_j$ is a parabolic subbundle of $V$ and the decomposition \eqref{eq:11...1} is compatible with the parabolic structure \eqref{eq:filtration-11...1}. The filtration of the fibre $L_{j,p}$ of $L_j$ at $p$ is of course trivial 
\begin{equation}\label{eq:trivfiltL}
L_{j,p}\supsetneq\{0\},
\end{equation}
and the corresponding weight $\beta_j(p)$ assigned to $L_{j,p}$ is $\beta_j(p)=\a_i(p)$ where $i$ is such that $L_{j,p}\subseteq  V_{p,i}$ but $L_{j,p}\not\subseteq  V_{p,i+1}$; this is precisely the condition coming from \eqref{eq:weight-subbundle}. Since there are $n$ line subbundles and the filtration \eqref{eq:filtration-11...1} has length $n$, we see that \eqref{eq:filtration-11...1} is determined by a distribution of the weights at $p$ among the fibers  of the line subbundles $L_j$ at $p$. Precisely, $V_{p,n}=L_{j,p}$  where $j$ is such that $\beta_j(p)=\a_n(p)$ and, for $i<n$, $V_{p,i}=V_{p,i+1}\oplus L_{j',p}$ with $j'$ such that $\beta_{j'}(p)=\a_i(p)$. Such distribution of the $n$ weights at $p$ is provided by a permutation of the set $\{1,\ldots,n\}$, so by an element $\varpi_n(p)$ of the symmetric group $S_n$. Write such permutation by a word 
\[\varpi_n(p)=a_1(p) a_2(p)\dots a_n(p)\in S_n\] with $a_j(p)\in\{1,\ldots,n\}$, where this means that we assign the weight $\a_{a_j(p)}(p)$ to the fibre $L_{j,p}$.
The conclusion is that the parabolic structure on $V=\bigoplus_{j=1}^n L_j$ is determined by an element 
\begin{equation}\label{eq:permutations}
\varpi_n=(\varpi_n(p_1),\ldots,\varpi_n(p_{|D|}))\in S_n^{|D|}.
\end{equation}

Now we have to see how the Higgs field comes into play. It is given by \eqref{eq:11...1}, so $\vp_j\in H^0(X,\SPH(L_j,L_{j+1})\otimes K(D))$ for every $j$.
The residue of $\vp$ at $p\in D$ is given, according to the decomposition  \eqref{eq:11...1} of $V$, by 
\[\vp_p=\left(\begin{array}{ccccc}0 & 0 & \dots & 0 & 0 \\ \vp_{1,p} & 0 & \dots & 0 & 0 \\ \vdots & \vdots & \ddots & \vdots & \vdots \\0 & 0 & \dots & \vp_{n-1,p} & 0\end{array}\right).\] 
Suppose $\varpi_n(p)=a_1(p) a_2(p)\dots a_n(p)$. Since $\vp$ is strongly parabolic, it follows from \eqref{eq:trivfiltL} that if $a_j(p)>a_{j+1}(p)$ then $\vp_{j,p}=0$. Thus
\[
a_j(p)>a_{j+1}(p)\Longrightarrow  \vp_j\in H^0(X,\Hom(L_j,L_{j+1})\otimes K(D-p)).
\]
For each $j=1,\ldots,n-1$, define the subdivisor of $D$
\[S_j(\varpi_n)=\{p\in D\st a_j(p)>a_{j+1}(p)\}\subseteq D,\]
so that
\[
\vp_j\in H^0(X,\Hom(L_j,L_{j+1})\otimes K(D-S_j(\varpi_n))).
\]

Let 
\[M_j=L_j^{-1}L_{j+1}K(D-S_j(\varpi_n))\] and write
\begin{equation}\label{eq:degMj}
m_j=\deg(M_j)=-d_j+d_{j+1}+2g-2+|D|-s_j(\varpi_n)\geq 0.
\end{equation} 
with $d_j=\deg(L_j)$ and $s_j(\varpi_n)=|S_j(\varpi_n)|$, the cardinal of $S_j(\varpi_n)$.
By \eqref{eq:MHS-fixed det}, 
\begin{equation}\label{eq:prodMj}
\prod_{j=1}^{n-1}M_j^j\cong L_n^n\Lambda^{-1}K^{\frac{n(n-1)}{2}}\left(\frac{n(n-1)}{2}D-\sum_{j=1}^{n-1}jS_j(\varpi_n)\right)
\end{equation} 
and this implies
\begin{equation}\label{eq:constrain on degMj}
\begin{cases}
\displaystyle d+\sum_{j=1}^{n-1}j(m_j+s_j(\varpi_n))\equiv 0\hspace{-.25cm}\pmod n,&\text{ if }\quad n\geq 3\\
d+m_1+s_1(\varpi_2)-|D|\equiv 0\hspace{-.25cm} \pmod 2,&\text{ if }\quad n=2.
\end{cases}
\end{equation}
Clearly the collection $(m_j)_j$ determines the collection $(d_j)_j$ and vice-versa through \eqref{eq:degMj} and \eqref{eq:prodMj}.

The proper $\vp$-invariant subbundles of $V$ are the ones of the form $V_l=\bigoplus_{j=l}^nL_j$, for $2\leq l\leq n$.
The stability condition $\pmu(V_l)<\pmu(V)$ (cf. Definition \ref{def:stabcond}) for the subbundle $V_l$ reads as 
\begin{equation}\label{eq:stab cond-VHS}
\begin{split}(n-l+1)\sum_{j=1}^{l-1}jm_j+(l-1)\sum_{j=l}^{n-1}(n-j)m_j&< \sum_{p\in D}\left(\sum_{i=1}^n(n-l+1)\alpha_i(p)-n\sum_{j=l}^n\alpha_{a_j(p)}(p)\right)+\\
&+(g-1+|D|/2)n(n-l+1)(l-1)-\\
&-(n-l+1)\sum_{j=1}^{l-1}js_j(\varpi_n)-(l-1)\sum_{j=l}^{n-1}(n-j)s_j(\varpi_n).
\end{split}
\end{equation}

Given $\varpi_n$ as in \eqref{eq:permutations} and $m_1,\ldots,m_{n-1}$ non-negative integers such that \eqref{eq:constrain on degMj} and \eqref{eq:stab cond-VHS} hold, denote by $\cF_{(1,1,\ldots,1)}(\varpi_n,m_1,\ldots,m_{n-1})$ be the subspace of $\cF_{(1,1,\ldots,1)}$ determined by the given numerical/topological data.
So we can write the decomposition of $\cF_{(1,1,\ldots,1)}$ as
\begin{equation}\label{eq:decompN11...1}
\cF_{(1,1,\ldots,1)}=\bigsqcup_{\varpi_n\in S_n^{|D|}}\bigsqcup_{m_1,\ldots,m_{n-1}\geq 0\atop\text{such that } \eqref{eq:constrain on degMj},\, \eqref{eq:stab cond-VHS}\text{ hold}}\cF_{(1,1,\ldots,1)}(\varpi_n,m_1,\ldots,m_{n-1}),
\end{equation}
and, from what we have done so far, the following is clear.

\begin{proposition} \label{prop:fixed-critical-11...1}
Let  $\varpi_n\in S_n^{|D|}$ as in \eqref{eq:permutations} and $m_1,\ldots,m_{n-1}$ non-negative integers verifying \eqref{eq:constrain on degMj} and \eqref{eq:stab cond-VHS} for every $l=2,\ldots,n$. Then the critical subvariety $\cF_{(1,1,\ldots,1)}(\varpi_n,m_1,\ldots,m_{n-1})$ is given by the pull-back diagram
  \[\displaystyle{\begin{CD}
    \cF_{(1,1,\ldots,1)}(\varpi_n,m_1,\ldots,m_{n-1})
      @>>> \Jac^{d_n}(X) \\
    @VVV @VVV \\
\prod_{j=1}^{n-1}\Sym^{m_j}(X) @>>> \Jac^{\sum_j jm_j}(X)\ ,
  \end{CD}}\]
  where:
  \begin{itemize}
  \item the top map is $(V,\vp)=(\bigoplus_jL_j, \sum_j\vp_j) \mapsto L_n$;
    \item $d_n=\frac{1}{n}\sum_{j=1}^{n-1}j(m_j+s_j(\varpi_n))-(n-1)(g-1+|D|/2)$;
  \item the vertical map on the left
  is given by $(V,\vp)=(\bigoplus_jL_j, \sum_j\vp_j) \mapsto  (\divisor(\vp_1),\ldots,\divisor(\vp_{n-1}))$;
  \item the map on the bottom is $(D_1,\ldots,D_{n-1}) \mapsto  \mathcal{O}_X(\sum_jjD_j)$;
  \item the vertical map on the right is $L_n \mapsto
  L_n^n\Lambda^{-1}K^{\frac{n(n-1)}{2}}\left(\frac{n(n-1)}{2}D-\sum_{j=1}^{n-1}jS_j(\varpi_n)\right)$.
    \end{itemize}
\end{proposition}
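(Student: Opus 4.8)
The plan is to exhibit the critical subvariety as the asserted fibre product by constructing a morphism $\Phi$ to the fibre product together with an inverse $\Psi$. Throughout, a point of $\cF_{(1,1,\ldots,1)}(\varpi_n,m_1,\ldots,m_{n-1})$ is a fixed point $(V,\vp)$ as in \eqref{eq:11...1}, where $V=\bigoplus_{j=1}^nL_j$, the parabolic structure is the one determined by $\varpi_n$, and $\vp=\sum_{j=1}^{n-1}\vp_j$ with each $\vp_j$ a \emph{nonzero} section of $M_j=L_j^{-1}L_{j+1}K(D-S_j(\varpi_n))$ (cf.\ \eqref{eq:degMj}).

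First I would define $\Phi$ by sending $(V,\vp)$ to the pair $(L_n,(\divisor(\vp_1),\ldots,\divisor(\vp_{n-1})))$. Since $\vp_j\neq 0$, its zero divisor $D_j=\divisor(\vp_j)$ is an effective divisor of degree $m_j$, so $M_j\cong\cO_X(D_j)$ and $(D_1,\ldots,D_{n-1})\in\prod_j\Sym^{m_j}(X)$. That $\Phi$ lands in the fibre product is exactly the content of \eqref{eq:prodMj}: substituting $M_j\cong\cO_X(D_j)$ there yields $\cO_X(\sum_jjD_j)\cong L_n^n\Lambda^{-1}K^{n(n-1)/2}(\tfrac{n(n-1)}{2}D-\sum_jjS_j(\varpi_n))$, which says precisely that the bottom map and the right map of the diagram agree on $(L_n,(D_j))$. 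Comparing degrees of the two sides of this identity pins down $d_n=\deg(L_n)$.

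Next I would construct the inverse. Given a point $(L_n,(D_j))$ of the fibre product, the relations $M_j\cong\cO_X(D_j)$ determine the remaining line bundles by the descending recursion $L_j\cong L_{j+1}\cO_X(-D_j)K(D-S_j(\varpi_n))$; one then forms $V=\bigoplus_jL_j$ with the parabolic structure attached to $\varpi_n$ through \eqref{eq:weight-subbundle}, and takes for $\vp_j$ a section of $M_j\cong\cO_X(D_j)$ with divisor $D_j$. The fibre-product condition guarantees $\prod_jL_j\cong\Lambda$, so $(V,\vp)\in\cM$. To promote this recipe to an actual morphism $\Psi$, I would assemble a universal family over (an \'etale cover of) the fibre product, using a Poincar\'e bundle on $\Jac^{d_n}(X)\times X$ and the universal effective divisors on $\Sym^{m_j}(X)\times X$, and then invoke the moduli property of $\cM$.

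The one genuinely delicate point --- and the step I expect to be the main obstacle --- is that $\vp_j$ is pinned down by its divisor $D_j$ only up to a nonzero scalar, so $\Psi$ is a priori defined only up to this ambiguity. This is resolved by the grading-preserving bundle isomorphisms of $V$ compatible with the fixed determinant: an element $(\lambda_1,\ldots,\lambda_n)\in(\C^*)^n$ with $\prod_j\lambda_j=1$ carries $\sum_j\vp_j$ to $\sum_j(\lambda_{j+1}/\lambda_j)\vp_j$, and the map $(\lambda_j)\mapsto(\lambda_{j+1}/\lambda_j)$ onto $(\C^*)^{n-1}$ is surjective. Hence any two choices of scalars yield Higgs bundles isomorphic in $\cM$; this makes $\Psi$ well defined into the moduli space and simultaneously shows that $\Phi$ is injective on points. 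Finally $\Phi\circ\Psi=\Id$ is immediate, since $L_n$ and the divisors $D_j$ are recovered by construction, while $\Psi\circ\Phi=\Id$ follows because the recursion recovers the original $L_j$ and the scalar argument identifies the reconstructed Higgs field with the original one up to isomorphism. Verifying that these mutually inverse maps are morphisms of varieties, rather than merely bijections of points, is where the universal-family bookkeeping is required, but it becomes routine once the scalar ambiguity has been handled.
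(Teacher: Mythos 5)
Your argument is correct and follows exactly the route the paper intends: the paper gives no separate proof of this proposition, declaring it ``clear'' from the preceding discussion of the line bundles $M_j=L_j^{-1}L_{j+1}K(D-S_j(\varpi_n))$, the determinant relation \eqref{eq:prodMj}, and the numerical constraints, and your $\Phi$/$\Psi$ construction is precisely a careful write-up of that discussion. Your treatment of the only delicate point --- that $\vp_j$ is determined by $\divisor(\vp_j)$ only up to scale, absorbed by the diagonal automorphisms $(\lambda_j)$ of $\bigoplus_j L_j$ acting through the surjection $(\lambda_j)\mapsto(\lambda_{j+1}/\lambda_j)$ onto $(\C^*)^{n-1}$ --- is the standard argument from the references the paper cites for the subsequent proposition, so the two approaches coincide.
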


\subsection{The $E$-polynomial of the variant part}

The proof of the next result uses the description $\cF_{(1,1,\ldots,1)}(\varpi_n,m_1,\ldots,m_{n-1})$ given in Proposition \ref{prop:fixed-critical-11...1}. It can be found essentially in \cite[Theorem 7.6 (iv)]{hitchin:1987}, \cite[Proposition 3.11]{gothen:1994} and \cite[Proposition 10.1]{hausel-thaddeus:2003}. Again it is essential that $n$ is prime.

Recall that the group $\Gamma_n$ acts on $\cM$ by \eqref{eq:Gamma_n action}. This action clearly preserves each component $\cF_{(1,1,\ldots,1)}(\varpi_n,m_1,\ldots,m_{n-1})$ of $\cF_{(1,1,\ldots,1)}$.

\begin{proposition}\label{prop:variant111}
Let $n$ be prime. The variant part of the cohomology of $\cF_{(1,1,\ldots,1)}(\varpi_n,m_1,\ldots,m_{n-1})$ is non-trivial only in degree $m_1+\cdots+m_{n-1}$.
More precisely, 
\[H^*(\cF_{(1,1,\ldots,1)}(\varpi_n,m_1,\ldots,m_{n-1}),\C)^\var\cong\bigoplus_{\gamma\in\Gn\setminus\{e\}}\bigotimes_{j=1}^{n-1}\Lambda^{m_j}H^1(X,L_\gamma^j),\] where $H^1(X,L_\gamma^j)$ denotes twisted cohomology with values in the local system $L_\gamma^j$, and $L_\gamma$ is the flat line bundle corresponding to $\gamma$.
\end{proposition}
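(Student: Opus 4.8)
The plan is to exploit the pull-back description of $\cF:=\cF_{(1,1,\ldots,1)}(\varpi_n,m_1,\ldots,m_{n-1})$ from Proposition~\ref{prop:fixed-critical-11...1}, reducing everything to the cohomology of rank-one local systems on symmetric products of $X$. Write $A=\prod_{j=1}^{n-1}\Sym^{m_j}(X)$ and let $b\colon A\to\Jac^{\sum_j jm_j}(X)$ be the bottom map $(D_1,\ldots,D_{n-1})\mapsto\cO_X(\sum_j jD_j)$. The right vertical map $\mu\colon\Jac^{d_n}(X)\to\Jac^{\sum_j jm_j}(X)$, $L_n\mapsto L_n^n\otimes(\text{a fixed bundle})$, is multiplication by $n$ on the Jacobian up to a translation, hence an \'etale Galois cover whose deck group is exactly $\Gn=\Jac_{[n]}(X)$ acting by translation. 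As $\cF$ is the fibre product $A\times_{\Jac}\Jac^{d_n}(X)$, the projection $\pi\colon\cF\to A$ is the base change of $\mu$, so it is again an \'etale $\Gn$-cover, and the deck action coincides with the $\Gn$-action of \eqref{eq:Gamma_n action} (which sends $L_n\mapsto L_n\otimes L_\gamma$).

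First I would decompose by characters: since $\pi$ is an \'etale $\Gn$-cover with $\Gn$ abelian, $\pi_*\C\cong\bigoplus_{\chi\in\widehat{\Gn}}\cL_\chi$, where $\cL_\chi=b^*\cM_\chi$ and $\cM_\chi$ is the rank-one local system on $\Jac^{\sum_j jm_j}(X)$ with monodromy $\chi$. Taking cohomology gives the $\Gn$-isotypical decomposition $H^*(\cF,\C)\cong\bigoplus_\chi H^*(A,\cL_\chi)$, in which the trivial character contributes the invariant part. Consequently
\[
H^*(\cF,\C)^\var\cong\bigoplus_{\chi\neq 1}H^*(A,\cL_\chi).
\]
The next step is to identify $\cL_\chi$ factor by factor. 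Under the Weil pairing $\widehat{\Gn}\cong\Gn$ a nontrivial $\chi$ corresponds to $\gamma\in\Gn\setminus\{e\}$, i.e.\ to the flat line bundle $L_\gamma$, and $\cM_\chi$ restricts along the ordinary Abel--Jacobi map to the local system defined by $L_\gamma$. Because the $j$-th factor enters $b$ through $D_j\mapsto\cO_X(jD_j)$, that is, ordinary Abel--Jacobi followed by multiplication by $j$ on the Jacobian, the induced local system on $\Sym^{m_j}(X)$ is the one defined by $L_\gamma^j$. Thus $\cL_\chi$ is an external tensor product and, by K\"unneth,
\[
H^*(\cF,\C)^\var\cong\bigoplus_{\gamma\in\Gn\setminus\{e\}}\bigotimes_{j=1}^{n-1}H^*\bigl(\Sym^{m_j}(X),(L_\gamma^j)^{(m_j)}\bigr),
\]
where $L^{(m)}$ denotes the local system on $\Sym^m(X)$ induced by a flat line bundle $L$ on $X$.

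Finally I would invoke the classical computation of the cohomology of such a local system on a symmetric product (cf.\ Hitchin, Gothen and Hausel--Thaddeus as cited above): for a \emph{nontrivial} flat line bundle $L$ one has $H^*\bigl(\Sym^m(X),L^{(m)}\bigr)\cong\Lambda^m H^1(X,L)$, concentrated in degree $m$, the point being that $H^0(X,L)$ and $H^2(X,L)$ vanish when $L$ is nontrivial. Here primality of $n$ is essential: if $\gamma\neq e$ then, being $n$-torsion, it has order exactly $n$, so $\gamma^j\neq e$ and $L_\gamma^j$ is nontrivial for every $j=1,\ldots,n-1$; hence the vanishing applies uniformly and each summand sits in total degree $\sum_j m_j$. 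Assembling the factors yields the stated isomorphism. The main obstacle is the bookkeeping of the middle step --- matching each character $\chi$ with the correct $\gamma$ and tracking how the weight $j$ in $\cO_X(\sum_j jD_j)$ twists the local system on the $j$-th symmetric product into $L_\gamma^j$ rather than $L_\gamma$ --- together with the nontriviality input from primality, without which the clean degree concentration would fail.
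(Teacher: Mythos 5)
Your argument is correct and is exactly the proof the paper has in mind: the paper does not write out a proof but points to the fibre-product description of Proposition~\ref{prop:fixed-critical-11...1} and to Hitchin, Gothen and Hausel--Thaddeus, whose argument is precisely your decomposition of the \'etale $\Gn$-cover $\cF_{(1,1,\ldots,1)}(\varpi_n,m_1,\ldots,m_{n-1})\to\prod_j\Sym^{m_j}(X)$ into character local systems, the identification of the $j$-th factor with $(L_\gamma^j)^{(m_j)}$, and the Macdonald-type computation $H^*(\Sym^m(X),L^{(m)})\cong\Lambda^mH^1(X,L)$ in degree $m$ for nontrivial flat $L$. You also correctly isolate where primality enters, namely that $L_\gamma^j$ stays nontrivial for all $j=1,\ldots,n-1$.
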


We are now in position to determine the left-hand side of
\eqref{eq:polynom0}. 

\begin{proposition}\label{prop:E-pol-variant-part}
For any $n$ prime, and any generic system of small weights, the following holds:
\begin{equation}\label{eq:poly variant1111}
(uv)^{\dim(\cM)/2}E(\cF_{(1,1,\ldots,1)})^\var=\frac{n^{2g}-1}{n}(n!)^{|D|}(uv)^{(n^2-1)(g-1)+|D|n(n-1)/2}((1-u)(1-v))^{(n-1)(g-1)}.
\end{equation}
\end{proposition}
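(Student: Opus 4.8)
The plan is to combine the cohomological description in Proposition~\ref{prop:variant111} with the decomposition \eqref{eq:decompN11...1}, turning the computation into a purely combinatorial sum over the discrete data $(\varpi_n,m_1,\dots,m_{n-1})$. Since each $\cF_{(1,1,\dots,1)}(\varpi_n,m_1,\dots,m_{n-1})$ is smooth and projective, its cohomology is pure and Proposition~\ref{prop:variant111} concentrates the variant part in the single degree $\sum_j m_j$. For $\gamma\neq e$ and $n$ prime, each $L_\gamma^j$ ($1\le j\le n-1$) is a nontrivial rank-one local system, so $H^1(X,L_\gamma^j)$ is a pure weight-one Hodge structure with $h^{1,0}=h^{0,1}=g-1$; hence $\Lambda^{m_j}H^1(X,L_\gamma^j)$ has Hodge polynomial $\sum_{a+b=m_j}\binom{g-1}{a}\binom{g-1}{b}u^av^b$. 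Taking the alternating sum in the single cohomological degree and using that there are $n^{2g}-1$ nontrivial $\gamma$, all contributing equally, I would obtain
\[
E(\cF_{(1,1,\dots,1)})^\var=(n^{2g}-1)\sum_{\varpi_n\in S_n^{|D|}}\ \sum_{(m_j)\ \mathrm{valid}}(-1)^{\sum_j m_j}\prod_{j=1}^{n-1}\Big(\sum_{a+b=m_j}\binom{g-1}{a}\binom{g-1}{b}u^av^b\Big).
\]

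Next I would pin down which $(m_1,\dots,m_{n-1})$ contribute. The factor $\sum_{a+b=m_j}\binom{g-1}{a}\binom{g-1}{b}u^av^b$ vanishes once $m_j>2g-2$, so the cohomology automatically truncates the sum to the box $0\le m_j\le 2g-2$; the only genuine arithmetic restriction is then the congruence \eqref{eq:constrain on degMj}. The crucial point, and what I expect to be the main obstacle, is to show that for small weights the stability inequalities \eqref{eq:stab cond-VHS} impose no further condition inside this box. As the left-hand side of \eqref{eq:stab cond-VHS} is increasing in each $m_j$, it suffices to verify stability at the corner $m_j=2g-2$ for all $j$; there a direct estimate shows that the slack equals an \emph{ascent} contribution $(n-l+1)\sum_{j<l}j\bar s_j+(l-1)\sum_{j\ge l}(n-j)\bar s_j$, with $\bar s_j=|D|-s_j(\varpi_n)\ge0$, plus a weight term $W(l)$ that tends to $0$ as the weights shrink. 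Every $\bar s_j$ occurs with a strictly positive coefficient, so if some $\bar s_j>0$ the integer slack dominates $W(l)$ for small weights; and if all $\bar s_j=0$, forcing each $\varpi_n(p)$ to be the reversal, a rearrangement estimate gives $W(l)>0$ directly. Either way stability holds throughout the box, so for small generic weights the contributing $(m_j)$ are exactly those satisfying \eqref{eq:constrain on degMj}.

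With the constraint reduced to the single congruence $\sum_j jm_j\equiv c(\varpi_n)\pmod n$, I would extract it by a roots-of-unity filter. Writing $\zeta=e^{2\pi i/n}$ and using $\sum_{m\ge0}(-\zeta^{kj})^m\sum_{a+b=m}\binom{g-1}{a}\binom{g-1}{b}u^av^b=(1-\zeta^{kj}u)^{g-1}(1-\zeta^{kj}v)^{g-1}$, the inner sum becomes $\tfrac1n\sum_{k=0}^{n-1}\zeta^{-kc(\varpi_n)}\prod_{j=1}^{n-1}(1-\zeta^{kj}u)^{g-1}(1-\zeta^{kj}v)^{g-1}$. The key observation is that the $\varpi_n$-dependence of the residue in \eqref{eq:constrain on degMj} enters only through $\sum_j j\,s_j(\varpi_n)=\sum_{p\in D}\mathrm{maj}(\varpi_n(p))$, a sum of major indices that factorizes over the points of $D$. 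Summing over $\varpi_n\in S_n^{|D|}$ therefore produces the factor $\big(\sum_{\sigma\in S_n}\zeta^{k\,\mathrm{maj}(\sigma)}\big)^{|D|}=\big([n]_{\zeta^k}!\big)^{|D|}$, where $[n]_q!=\prod_{i=1}^n\frac{1-q^i}{1-q}$ by MacMahon's theorem. For $n$ prime and $k\neq0$ the element $\zeta^k$ is a primitive $n$-th root of unity, so the factor $i=n$ forces $[n]_{\zeta^k}!=0$ and kills every $k\neq0$ term; only $k=0$ survives, giving $\tfrac1n(n!)^{|D|}((1-u)(1-v))^{(n-1)(g-1)}$.

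Assembling these yields $E(\cF_{(1,1,\dots,1)})^\var=\frac{n^{2g}-1}{n}(n!)^{|D|}((1-u)(1-v))^{(n-1)(g-1)}$, and multiplying by $(uv)^{\dim(\cM)/2}$ with $\dim(\cM)/2=(n^2-1)(g-1)+|D|n(n-1)/2$ (from \eqref{eq:dim}) gives exactly \eqref{eq:poly variant1111}. The formal parts, namely the Hodge numbers of $H^1(X,L_\gamma^j)$ and the major-index vanishing, are clean; I expect the stability estimate of the second paragraph to be the only delicate step, and it is precisely there that the small-weights hypothesis is used.
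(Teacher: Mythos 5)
Your proof is correct and follows the same overall architecture as the paper's: the decomposition \eqref{eq:decompN11...1}, the concentration of the variant cohomology in the single degree $\sum_j m_j$ via Proposition~\ref{prop:variant111} with $h^{1,0}=h^{0,1}=g-1$, the automatic truncation to the box $0\le m_j\le 2g-2$, the reduction of the stability constraint \eqref{eq:stab cond-VHS} to the corner $m_j=2g-2$ under the small-weights hypothesis, and a roots-of-unity filter for the congruence \eqref{eq:constrain on degMj}. You diverge at two points, both worth noting. First, where the paper only asserts that ``one shows'' \eqref{eq:stab cond-VHS} holds at the corner, you carry out the estimate and isolate the genuine boundary case in which every $\varpi_n(p)$ is the reversal (so all $s_j(\varpi_n)=|D|$ and the integer slack vanishes); there positivity must come from the weight term alone, and your rearrangement observation --- the average of the $n-l+1$ smallest weights is strictly below the overall average --- settles it. This case is real and is glossed over in the paper, so your treatment is actually more complete. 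Second, to kill the $k\neq 0$ terms of the filter the paper proves by hand, in Lemma~\ref{lemma:combinatorialweights}, that $\cS(n,d)=(n-1)!$ via an explicit insertion bijection; you instead recognize $\sum_j j\,s_j(\sigma)$ as the major index and invoke MacMahon's identity $\sum_{\sigma\in S_n}q^{\mathrm{maj}(\sigma)}=[n]_q!$, which vanishes whenever $q=\zeta^k$ with $k\not\equiv 0 \pmod n$. The two statements are equivalent (equidistribution of the major index modulo $n$), but your route replaces a page of case analysis with a classical citation, and it in fact makes clear that primality of $n$ is not needed for this particular vanishing (it is needed elsewhere, e.g.\ to collapse $\prod_{j=1}^{n-1}(1-\xi^{jl}u)$ and in Proposition~\ref{prop:variant111} itself). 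The final assembly, including the factor $(uv)^{\dim(\cM)/2}$ with $\dim(\cM)/2=(n^2-1)(g-1)+|D|n(n-1)/2$, matches the paper exactly.
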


\proof By \eqref{eq:decompN11...1},
\begin{equation}\label{eq:E-poly1}
E(\cF_{(1,1,\ldots,1)})^\var=\sum_{\varpi_n\in S_n^{|D|}}\sum_{m_1,\ldots,m_{n-1}\atop\text{such that } \eqref{eq:constrain on degMj},\, \eqref{eq:stab cond-VHS}\text{ hold}}E(\cF_{(1,1,\ldots,1)}(\varpi_n,m_1,\ldots,m_{n-1}))^\var,
\end{equation}
and then we must multiply it by the factor $(uv)^{\dim(\cM)/2}$.

For any non-trivial $\gamma\in\Gn$ and any $j$,
$\dim H^1(X,L_\gamma^j)=g-1$, i.e., $\dim
H^{1,0}(X,L_\gamma^j)=g-1$.
Thus by Proposition \ref{prop:variant111}, we find that
\[E(\cF_{(1,1,\ldots,1)}(\varpi_n,m_1,\ldots,m_{n-1}))^\var(u,v)=(n^{2g}-1)\prod_{j=1}^{n-1}\sum_{p+q=m_j\atop 0\leq p,q\leq g-1}(-1)^{p+q}\binom{g-1}{p}\binom{g-1}{q}u^pv^q.\]
We need to sum this expression over all $k$-tuples of permutations $\varpi_n$ and over all non-negative integers $m_j$ such that \eqref{eq:constrain on degMj} and \eqref{eq:stab cond-VHS} hold. 

Regarding the summation over the $m_j$, note that the right hand side is zero whenever there is an $m_j>2g-2$.

Since the system of the (generic) weights is small, the summand
$$\sum_{p\in
  D}\left(\sum_{i=1}^n(n-l+1)\alpha_i(p)-n\sum_{j=l}^n\alpha_{a_j(p)}(p)\right)$$
in \eqref{eq:stab cond-VHS} is very close to zero as well. From this, and using the fact that $s_j(\varpi_n)\leq |D|$
for all $j$, one shows that \eqref{eq:stab cond-VHS} holds for
$m_1=\dots=m_{n-1}=2g-2$, hence holds for any choice of $m_j$ between
$0$ and $2g-2$ for every $j$. Therefore we can sum over all
$0\leq m_1,\ldots,m_{n-1}\leq 2g-2$ subject to condition
\eqref{eq:constrain on degMj}. This is done by taking
$\xi =\exp(2\pi i/n)$ and since, for a given integer $\nu\in\Z$, the
sum $\sum_{l=0}^{n-1}\xi^{l\nu}$ equals $n$ if $\nu \equiv 0 \pmod n$
and zero otherwise, we have that $E(\cF_{(1,1,\ldots,1)})^\var(u,v)$
in \eqref{eq:E-poly1} equals, if $n\geq 3$,
\begin{equation*}
\begin{split}
&\hspace{.48cm}\frac{n^{2g}-1}{n}\sum_{\varpi_n\in S_n^{|D|}}\sum_{m_1,\ldots,m_{n-1}=0}^{2g-2}\sum_{l=0}^{n-1}\xi^{l\sum_{j=1}^{n-1}j(m_j+s_j(\varpi_n))}\prod_{j=1}^{n-1}\sum_{p+q=m_j\atop 0\leq p,q\leq g-1}(-1)^{p+q}\binom{g-1}{p}\binom{g-1}{q}u^pv^q\\
&=\frac{n^{2g}-1}{n}\sum_{\varpi_n\in S_n^{|D|}}\sum_{l=0}^{n-1}\xi^{l\sum_{j=1}^{n-1}js_j(\varpi_n)}\prod_{j=1}^{n-1}\sum_{m_j=0}^{2g-2}\sum_{p+q=m_j\atop 0\leq p,q\leq g-1}(-1)^{p+q}\binom{g-1}{p}\binom{g-1}{q}u^pv^q\xi^{jlm_j}\\
&=\frac{n^{2g}-1}{n}\sum_{\varpi_n\in S_n^{|D|}}\sum_{l=0}^{n-1}\xi^{l\sum_{j=1}^{n-1}js_j(\varpi_n)}\prod_{j=1}^{n-1}(1-\xi^{jl}u)^{g-1}(1-\xi^{jl}v)^{g-1}\\
&=\frac{n^{2g}-1}{n}(n!)^{|D|}((1-u)(1-v))^{(n-1)(g-1)}+\frac{n^{2g}-1}{n}\left(\frac{(1-u^n)(1-v^n)}{(1-u)(1-v)}\right)^{g-1}(n\cS(n,d)-n!)^{|D|},\\
\end{split}
\end{equation*}
where in the last equality we used the fact that $n$ is prime, and where
\[\cS(n,d)=\#\bigg\{\varpi_n(p)\in S_n\st \sum_{j=1}^{n-1}js_j(\varpi_n(p))\equiv 0\pmod n\bigg\}.\]
For $n=2$ we perform the precise same computation, except that we use the expression corresponding to $n=2$ in \eqref{eq:constrain on degMj}, yielding 
\begin{equation*}
\begin{split}
E(\cF_{(1,1)})^\var&=2^{|D|-1}(2^{2g}-1)(uv)^{3g-3+|D|}((1-u)(1-v))^{g-1}\\ &+\frac{2^{2g}-1}{2}((1+u)(1+v))^{g-1}(2\cS(2,d)-2)^{|D|},
\end{split}
\end{equation*} where 
$\cS(2,d)=\#\big\{\varpi_2(p)\in S_2\st |D|+s_1(\varpi_2(p))\equiv 0\pmod 2\big\}$.

It is clear that that the values of both $\cS(n,d)$ and $\cS(2,d)$ are independent of $p\in D$. It is also clear that $\cS(2,d)=1$. Actually by Lemma \ref{lemma:combinatorialweights} below, we have $\cS(n,d)=(n-1)!$, hence, for any $n\geq 2$ prime, $(uv)^{\dim(\cM)/2}E(\cF_{(1,1,\ldots,1)})^\var(u,v)$ equals
\[
\frac{n^{2g}-1}{n}(n!)^{|D|}(uv)^{(n^2-1)(g-1)+|D|n(n-1)/2}((1-u)(1-v))^{(n-1)(g-1)},
\]
completing the proof.
\endproof

The next lemma completes the proof of Proposition \ref{prop:E-pol-variant-part}.

\begin{lemma}\label{lemma:combinatorialweights}
For any $n\geq 2$ and $d$, $\cS(n,d)=(n-1)!$.
\end{lemma}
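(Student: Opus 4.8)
The plan is to recognize $\cS(n,d)$ as a count of permutations by their \emph{major index} modulo $n$. For a single permutation $\varpi_n(p)=a_1a_2\cdots a_n\in S_n$, the quantity $s_j(\varpi_n(p))$ is exactly the indicator that $j$ is a descent, i.e.\ that $a_j>a_{j+1}$; hence
\[
\sum_{j=1}^{n-1} j\,s_j(\varpi_n(p)) \;=\; \sum_{j\,:\,a_j>a_{j+1}} j \;=\; \mathrm{maj}(\varpi_n(p)),
\]
the major index of the permutation. In particular $\cS(n,d)=\#\{\sigma\in S_n\st \mathrm{maj}(\sigma)\equiv 0\pmod n\}$, which is manifestly independent of $d$, as the notation already anticipates.

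First I would invoke MacMahon's classical equidistribution formula for the major index,
\[
\sum_{\sigma\in S_n} q^{\mathrm{maj}(\sigma)} \;=\; \prod_{k=1}^{n}\bigl(1+q+\cdots+q^{k-1}\bigr),
\]
the $q$-factorial $[n]_q!$. To extract the residue class $\mathrm{maj}\equiv 0\pmod n$ I would then apply a roots-of-unity filter with $\xi=\exp(2\pi i/n)$, obtaining
\[
\cS(n,d) \;=\; \frac{1}{n}\sum_{l=0}^{n-1}\;\prod_{k=1}^{n}\bigl(1+\xi^{l}+\cdots+\xi^{l(k-1)}\bigr).
\]

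The decisive observation is that every term with $1\le l\le n-1$ vanishes. Writing $e=\gcd(l,n)$ and $k_0=n/e$, we have $1\le k_0\le n$ and $k_0>1$ (since $l\not\equiv 0$), and $\xi^l$ is a primitive $k_0$-th root of unity; hence the factor indexed by $k=k_0$ is the sum of all $k_0$-th roots of unity, namely $(\xi^{lk_0}-1)/(\xi^l-1)=0$. As this factor occurs in the product, the whole $l$-th term dies. Only $l=0$ survives, contributing $\prod_{k=1}^{n}k=n!$, whence $\cS(n,d)=n!/n=(n-1)!$.

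There is no serious obstacle here: the entire content is the identification of the statistic $\sum_j j\,s_j$ with $\mathrm{maj}$ together with MacMahon's formula and a standard character sum. I note that the argument uses only $n\ge 2$ and \emph{not} primality of $n$, consistent with the statement, and that it reproduces the values $\cS(2,d)=1$ and $\cS(3,d)=2$ recorded above. Should one prefer to avoid citing MacMahon, the generating-function identity admits a short self-contained proof by the insertion recursion: adjoining the largest entry $n$ to a permutation of $\{1,\dots,n-1\}$ in its $n$ available slots shifts the major index by $0,1,\dots,n-1$ in some order, giving the recursion $[n]_q!=[n-1]_q!\,(1+q+\cdots+q^{n-1})$.
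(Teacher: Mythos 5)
Your proof is correct, and it takes a genuinely different route from the paper. You identify $\sum_j j\,s_j$ with the major index and then combine MacMahon's product formula with a roots-of-unity filter, killing every term with $1\le l\le n-1$ because the factor $1+\xi^l+\cdots+\xi^{l(k_0-1)}$ vanishes when $k_0$ is the multiplicative order of $\xi^l$; this vanishing argument is sound, uses only $n\ge 2$, and in fact yields the stronger statement that $\mathrm{maj}$ is \emph{equidistributed} over all residue classes mod $n$. The paper instead proves the lemma by hand: it shows, via an explicit case analysis indexed by the descent set of $\varpi_{n-1}$, that inserting the letter $n$ into the $n$ available slots of a permutation of $S_{n-1}$ shifts $\sum_j j\,s_j$ by each residue $0,1,\dots,n-1$ exactly once — which is precisely the insertion recursion $[n]_q!=[n-1]_q!\,(1+q+\cdots+q^{n-1})$ that you mention at the end as a way to make your argument self-contained. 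Your approach is shorter, avoids the somewhat delicate index bookkeeping of the paper's bijection, and makes transparent why the answer is the same for every residue class; the paper's is elementary and self-contained at the cost of a longer verification. One small point of care: the paper's definition of $\cS(2,d)$ imposes $\mathrm{maj}\equiv|D|\pmod 2$ rather than $\mathrm{maj}\equiv 0$, so strictly you need the filter with the phase $\xi^{-lc}$ for a general residue $c$; since the $l\neq 0$ terms still vanish, the count is $(n-1)!$ for every $c$ and your conclusion stands unchanged.
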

\proof
This is a purely combinatorial proof. Since the number $\cS(n,d)$ is obviously independent of $p\in D$, we will remove it from the notation. Any permutation $\varpi_n\in S_n$ is obtained from a permutation $\varpi_{n-1}\in S_{n-1}$ by inserting $n$ in the appropriate position. Conversely, any $\varpi_{n-1}=a_1\,a_2\ldots a_{n-1}\in S_{n-1}$ produces $n$ distinct permutations in $S_n$, by inserting $n$ in $\varpi_{n-1}$ in each one of the possible $j$ positions of $\varpi_{n-1}$, where $j\in\{0,\ldots,n-1\}$. Write $\varpi_{n-1}(j)$ for such permutation in $S_n$, so that 
\begin{equation*}
\begin{split}
\varpi_{n-1}(0)&=n\,a_1\ldots a_{n-1},\\
\varpi_{n-1}(j)&=a_1\ldots a_j\, n\, a_{j+1}\ldots a_{n-1}\quad 1\leq j\leq n-2,\\
\varpi_{n-1}(n-1)&=a_1\ldots a_{n-1}\,n.
\end{split}
\end{equation*}

Fix any $\varpi_{n-1}\in S_{n-1}$. Let $A$ be the ordered set of the indexes $i$ between $1$ and $n-2$ where $s_i(\varpi_{n-1})=1$. In other words,
\[A=\{i_1,\ldots,i_s\st i_\ell<i_{\ell+1},\, a_{i_\ell}>a_{i_\ell+1}\},\] for some $s\in\{0,\ldots,n-2\}$ (where $A=\emptyset\Leftrightarrow s=0$). Notice that we always have $i_\ell\geq \ell$.
Let \[\sigma=\sum_{i=1}^{n-2}is_i(\varpi_{n-1})=\sum_{i\in A}i=i_1+\cdots+i_s.\]
For each $j=0,\ldots,n-1$, let $\sigma_n(j)\in\Z_n$ be the class modulo $n$ of the difference \[\sum_{i=1}^{n-1}is_i(\varpi_{n-1}(j))-\sigma.\]

We claim that for any $\varpi_{n-1}\in S_{n-1}$, 
\begin{equation}\label{eq:claim}
\{\sigma_n(j)\st j\in\{0,\ldots,n-1\}\}=\{0,\ldots,n-1\}.
\end{equation}
Note that this proves that $\cS(n,d)$ is in bijection with $S_{n-1}$ for any $d$, hence proves the lemma.

To prove \eqref{eq:claim}, we shall explicitly give for each $k\in\{0,\ldots,n-1\}$, the corresponding $j\in\{0,\ldots,n-1\}$ such that $\sigma_{n-1}(j)=k$.
\begin{itemize}
\item If $k=0$, then we have obviously to take $j=n-1$. Indeed, $\sum_{i=1}^{n-1}is_i(\varpi_{n-1}(n-1))=\sigma$, hence $\sigma_n(n-1)=0$.

\item If $1\leq k\leq s$, take $j=i_\ell\in A$, where $\ell=s-k+1$. Then $\sum_{i=1}^{n-1}is_i(\varpi_{n-1}(i_\ell))=\sigma+s-\ell+1=\sigma+k$, so $\sigma_n(i_\ell)=k$

\item If $s+1\leq k\leq s+ i_1$, then take $j=k-s-1$. In fact, since $j<i_1$, we have that $\sum_{i=1}^{n-1}is_i(\varpi_{n-1}(j))=j+1+\sigma+s=\sigma+k$, i.e., $\sigma_n(k-s-1)=k$.

\item Suppose now that $i_\ell+s-\ell+2\leq k\leq i_{\ell+1}+s-\ell$, for some $\ell\in\{1,\ldots,s-1\}$. Note that these situations are possible only if $i_{\ell+1}\geq i_\ell+2$. By taking $j=k-s+\ell-1$, one checks that $\sum_{i=1}^{n-1}is_i(\varpi_{n-1}(j))=\sigma+j+s-\ell+1=\sigma+k$, and $\sigma_n(k-s+\ell-1)=k$.

\item Finally, if $i_s+2\leq k\leq n-1$, choose $j=k-1$. Indeed, $\sum_{i=1}^{n-1}is_i(\varpi_{n-1}(j))=\sigma+j+1=\sigma+k$. Hence, $\sigma_n(k-1)=k$.
\end{itemize}

In these items we ran through all the possible values of $k\in\{0,\ldots,n-1\}$, exactly once each, and we found a bijection with the positions $j\in\{0,\ldots, n-1\}$ such that $\sigma_n(j)=k\in\Z_n$.
This proves \eqref{eq:claim} and thus the lemma.
\endproof

The following corollary proves the last statement of Theorem \ref{thm:main2}.

\begin{corollary}\label{cor:E-pol-variant-part-n23}
If $n=2,3$, then \eqref{eq:poly variant1111} holds for any system of generic weights.
\end{corollary}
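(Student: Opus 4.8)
The plan is to bootstrap from the small-weights computation of Proposition~\ref{prop:E-pol-variant-part} to all generic weights, using the weight-independence of the variant $E$-polynomial of $\cM$ as the transport mechanism. The restriction to $n=2,3$ enters only through the availability of Theorem~\ref{thm:varpartn23}, which identifies the left-hand side of \eqref{eq:poly variant1111} with $E(\cM)^\var$.

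First I would record that, for $n=2,3$, Theorem~\ref{thm:varpartn23} holds for \emph{any} system of generic weights and supplies the identity $E(\cM)^\var = (uv)^{\dim(\cM)/2}E(\cF_{(1,1,\ldots,1)})^\var$. Next I would invoke Corollary~\ref{cor:alpha-independent-E-poly}: since both $E(\cM^{\bm\alpha})$ and its $\Gamma_n$-invariant part $E(\cM^{\bm\alpha})^{\Gamma_n}$ are independent of the generic weight vector $\bm\alpha$, so is their difference $E(\cM)^\var$.

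These two facts reduce the corollary to Proposition~\ref{prop:E-pol-variant-part}. Choosing a system of small generic weights, that proposition evaluates $(uv)^{\dim(\cM)/2}E(\cF_{(1,1,\ldots,1)})^\var$ as the explicit polynomial on the right-hand side of \eqref{eq:poly variant1111}; by the identity from Theorem~\ref{thm:varpartn23} this explicit polynomial therefore equals $E(\cM)^\var$ at small weights. Since $E(\cM)^\var$ does not depend on the generic weights, it equals the same explicit polynomial for \emph{every} generic weight system. Applying Theorem~\ref{thm:varpartn23} a second time---now at an arbitrary generic $\bm\alpha$---converts this equality back into the asserted identity for $(uv)^{\dim(\cM)/2}E(\cF_{(1,1,\ldots,1)})^\var$, completing the argument.

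I do not anticipate any genuinely new computation here: the entire content is a weight-deformation bootstrap, and the one structural input that is rank-dependent is Theorem~\ref{thm:varpartn23}. For higher prime rank this is precisely the bridge between $\cF_{(1,1,\ldots,1)}$ and the full variant part of $\cM$ that is not known to exist, which is exactly why the small-weights hypothesis cannot yet be removed for general $n$.
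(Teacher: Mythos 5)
Your argument is correct and is essentially the paper's own proof: both use Proposition~\ref{prop:E-pol-variant-part} at small weights, the weight-independence of $E(\cM)^\var$ from Corollary~\ref{cor:alpha-independent-E-poly}, and Theorem~\ref{thm:varpartn23} as the bridge identifying $(uv)^{\dim(\cM)/2}E(\cF_{(1,1,\ldots,1)})^\var$ with $E(\cM)^\var$ for $n=2,3$. Your explicit ``apply the bridge twice'' phrasing is just a slightly more spelled-out version of the paper's statement that Theorem~\ref{thm:varpartn23} makes $E(\cF_{(1,1,\ldots,1)})^\var$ weight-independent.
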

\proof From Proposition \ref{prop:E-pol-variant-part} we know that
\eqref{eq:poly variant1111} holds under the assumption of small
weights. But by Corollary \ref{cor:alpha-independent-E-poly} we know
that the $E$-polynomials $E(\cM)$ and $E(\cM)^{\Gn}$ are independent
of the (generic) weights, hence so is $E(\cM)^\var$ for any
$n$. Theorem \ref{thm:varpartn23} implies then that
$E(\cF_{(1,1,\ldots,1)})^\var$ is also independent of the generic
weights whenever $n=2,3$. Therefore the formula we reached is valid
for any generic weights, for such $n$.
\endproof

\section{Unramified cyclic covers, norm maps and Pryms}
\label{sec:unramified-Norm,Prym,actions}

The purpose of the following section is to recall some classical
results about Prym varieties of unramified coverings, essentially
going back to Narasimhan--Ramanan \cite{narasimhan-ramanan:1975} and
Mumford \cite{mumford:1971}, and corresponding to Section~7 of
\cite{hausel-thaddeus:2003}. For the benefit of the interested reader,
we have included complete proofs.

\subsection{Connected components of the kernel of a norm map}
In Section \ref{subsec:Hitmap}, we considered the Prym variety of a
ramified cover in the context of the Hitchin fibration.  In the
 case of an unramified cover the structure
of the kernel of the norm map turns out to be quite different.

Let $n$ be a prime number. Fix $\gamma\in \Gn$ and let $L_\gamma$ be
the corresponding $n$-torsion line bundle on $X$. Denote the
associated unramified regular $n$-cover by
\begin{equation}\label{eq:unram n-cover}
\pi:X_\gamma\to X.
\end{equation} 
Recall that $X_\gamma$ is the spectral cover of $X$ defined as the
curve in the total space $|L_\gamma|$ of $L_\gamma$ defined by the
equation $\lambda^n-1=0$, where
$\lambda\in H^0(|L_\gamma|,p^*L_\gamma)$ is the tautological section,
and $p:|L_\gamma|\to X$ is the projection. Then $\pi$ is the
restriction of $p$ to $X_\gamma$.  The line bundle $\pi^*L_\gamma$ is
trivial over $X_\gamma$ since the nowhere vanishing section
$\lambda:\mathcal O_{X_\gamma}\to\pi^*L_\gamma$ gives a canonical
trivialization.

Let $\Pic(X)$ be the Picard group of $X$ and $\Pic^i(X)$ be the component
corresponding to line bundles of degree $i$, so that
$\Pic^0(X)\cong\Jac(X)$, and
\[\Pic(X)=\bigsqcup_{i\in\Z}\Pic^i(X).\] 
Consider the same notations for the curve $X_\gamma$.  The dimension
of $\Pic(X)$ is $g$ while the dimension of $\Pic(X_\gamma)$ is the
genus of $X_\gamma$, given by $n(g-1)+1$.

The pullback map $\pi^*:\Pic(X)\to\Pic(X_\gamma)$ is not injective
neither surjective.  The non-surjectivity of $\pi^*$ is clear by
dimensional reasons and also because
$\pi^*(\Pic^i(X))\subset\Pic^{ni}(X_\gamma)$. The next proposition
provides the description of the image.  Consider the Galois group of
the covering $\pi:X_\gamma\to X$. It is isomorphic to $\Z_n$, which we
consider as the group of the $n$-th roots of unity. Let
$\xi=\exp(2\pi i/n)$ be the standard generator. The Galois group
$\Z_n$ acts on $\Pic^i(X_\gamma)$ by pullback and obviously a line
bundle over $X_\gamma$ is fixed by $\Z_n$ if and only if it is fixed
by $\xi$.

\begin{proposition}\label{prop:kernel-image of pi*}
The kernel of $\pi^*$ is the finite free abelian group generated by $L_\gamma$, that is $\ker(\pi^*)\cong\Z_n$.
The image coincides with $\Pic(X_\gamma)^{\Z_n}$, i.e. the fixed point subvariety of $\Pic(X_\gamma)$ under the Galois group. So $\pi^*$ yields an isomorphism $\Pic^i(X)/\Z_n\cong\Pic^{ni}(X_\gamma)^{\Z_n}$.
\end{proposition}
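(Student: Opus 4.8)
The plan is to prove the three assertions separately: the computation of $\ker(\pi^*)$, the easy inclusion $\im(\pi^*)\subseteq\Pic(X_\gamma)^{\Z_n}$, and the surjectivity onto the fixed locus, the last being the substantive point. For the kernel, I would first note that since $n$ is prime and the cover is connected (so $\gamma\neq e$), the line bundle $L_\gamma$ has exact order $n$, whence $\langle L_\gamma\rangle\cong\Z_n$; and $\langle L_\gamma\rangle\subseteq\ker(\pi^*)$ because $\pi^*L_\gamma$ is trivial, as already observed via the tautological section $\lambda$. For the reverse inclusion I would use the eigen-decomposition $\pi_*\cO_{X_\gamma}\cong\bigoplus_{i=0}^{n-1}L_\gamma^{-i}$ of the pushforward of the structure sheaf of the cyclic cover, together with the projection formula $\pi_*\pi^*L\cong L\otimes\pi_*\cO_{X_\gamma}\cong\bigoplus_{i=0}^{n-1}(L\otimes L_\gamma^{-i})$. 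If $\pi^*L\cong\cO_{X_\gamma}$, then $\pi_*\pi^*L\cong\pi_*\cO_{X_\gamma}$, and since a decomposition of a bundle on a projective curve into line bundles is unique up to permutation (Krull--Schmidt), the multisets $\{L\otimes L_\gamma^{-i}\}_i$ and $\{L_\gamma^{-i}\}_i$ coincide; comparing the $i=0$ term gives $L\cong L_\gamma^{k}$ for some $k$. As $\deg\pi^*L=n\deg L$, the kernel automatically lies in $\Pic^0(X)$, so $\ker(\pi^*)=\langle L_\gamma\rangle\cong\Z_n$.

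For the image, the inclusion $\im(\pi^*)\subseteq\Pic(X_\gamma)^{\Z_n}$ is immediate: writing $T\colon X_\gamma\to X_\gamma$ for a generator of the deck group, the relation $\pi\circ T=\pi$ yields $T^*\pi^*L\cong\pi^*L$. The content is the reverse inclusion, which I would obtain by descent along the étale $\Z_n$-torsor $\pi$. Given $M\in\Pic^{ni}(X_\gamma)$ with $T^*M\cong M$, I would choose any isomorphism $\psi\colon M\to T^*M$; the composite $M\to T^*M\to\cdots\to T^{n*}M=M$ around the group is an automorphism of the line bundle $M$ over a connected curve, hence multiplication by a scalar $c\in\C^*$. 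Replacing $\psi$ by $a\psi$ multiplies this scalar by $a^{n}$, so choosing $a$ with $a^{n}=c^{-1}$ produces an honest $\Z_n$-linearization of $M$; the linearized bundle then descends to a line bundle $L$ on $X=X_\gamma/\Z_n$ with $\pi^*L\cong M$, and necessarily $\deg L=i$. Hence $\im(\pi^*)=\Pic(X_\gamma)^{\Z_n}$, matched degree by degree.

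The main obstacle is exactly this normalization step, i.e.\ the a priori possibility that a $T$-fixed line bundle fails to carry an equivariant structure. The relevant obstruction lives in $H^2(\Z_n,\C^*)$, and it vanishes here precisely because $\C^*$ is divisible (equivalently, the scalar $c$ admits an $n$-th root); this is where cyclicity of the cover is essential, and it is the reason that the \emph{entire} fixed locus, not merely its identity component, lies in the image. Assembling the three parts, $\pi^*$ restricts for each $i$ to a surjection $\Pic^i(X)\to\Pic^{ni}(X_\gamma)^{\Z_n}$ with kernel $\langle L_\gamma\rangle\cong\Z_n$, which gives the asserted isomorphism $\Pic^i(X)/\Z_n\cong\Pic^{ni}(X_\gamma)^{\Z_n}$.
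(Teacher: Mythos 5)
Your proposal is correct and follows essentially the same route as the paper: the kernel is computed via the projection formula applied to $\pi_*\cO_{X_\gamma}\cong\bigoplus_{i=0}^{n-1}L_\gamma^{-i}$, and the image is identified with the fixed locus by descent along the cyclic cover. You merely fill in two details the paper leaves implicit --- the Krull--Schmidt uniqueness needed to compare the two direct-sum decompositions, and the rescaling of $\psi$ by an $n$-th root that kills the $H^2(\Z_n,\C^*)$ obstruction to linearizing a $\xi$-invariant bundle --- both of which are standard and correctly handled.
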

\proof
We already know that $\pi^*L^j_\gamma\cong\cO_{X_\gamma}$, for any $j=0,\ldots,n-1$. For the converse, take $L$ a degree $0$ line bundle on $X$ whose pullback is trivial. Then 
\[\cO_X\oplus L_\gamma^{-1}\oplus\cdots\oplus L_\gamma^{-(n-1)}\cong\pi_*\cO_{X_\gamma}\cong\pi_*\pi^*L\cong L\otimes(\cO_X\oplus L_\gamma^{-1}\oplus\cdots\oplus L_\gamma^{-(n-1)})\]
which implies that $L$ must be some power of $L_\gamma$. 

Regarding the image of $\pi^*$, since $\pi\circ\xi=\pi$, it is clear that $\xi^*\pi^*L=\pi^*L$ for any $L\in\Pic(X)$. Conversely, if $F\in\Pic(X_\gamma)$ is fixed by $\xi$, then $F$ descends to a line bundle $L$ in $X$ so that $F=\pi^*L$.
\endproof

We shall now consider the norm map associated to the unramified cover $\pi:X_\gamma\to X$. There are several incarnations of this map, all of them compatible with each other. We will consider three of them and use the same notation for all. The context will clarify the ones we are using. The norm map on divisors is given by
\[\Nm_\pi:\Div(X_\gamma)\to\Div(X),\quad E=\sum_pE(p)p\mapsto\Nm_\pi(E)=\sum_pE(p)\pi(p).\] 
(This has already been defined before, for more general coverings, in Definition \ref{def:NmDiv}.)
We also have the norm map on the fields of non-zero meromorphic functions, given by
\begin{equation}\label{eq:Normfuncfields}
\Nm_\pi:\cM(X_\gamma)^*\to\cM(X)^*,\quad \Nm_\pi(f)(p)=\prod_{q\in\pi^{-1}(p)}f(q).
\end{equation}
It is clear that $\Nm_\pi(\divisor(f))=\divisor(\Nm_\pi(f))$, for any $f\in\cM(X_\gamma)^*$, hence the norm map on divisors induces the norm map on the Picard groups, i.e., on line bundles:
\begin{equation}\label{eq:Normlinebundles}
\Nm_\pi:\Pic(X_\gamma)\to\Pic(X),\quad \cO_{X_\gamma}(E)\mapsto\cO_X(\Nm_\pi(E)).
\end{equation}

Let $\ker(\Nm_\pi)$ be the subvariety of $\Jac(X_\gamma)$ defined as the kernel of \eqref{eq:Normlinebundles}, and consider the group homomorphism
\begin{equation}\label{eq:projection p}
p:\Pic(X_\gamma)\to\ker(\Nm_\pi),\qquad L\mapsto L^{-1}\otimes \xi^*L.
\end{equation}
It is well-defined since $\Nm_\pi(L^{-1}\otimes \xi^*L)=\Nm_\pi(L^{-1})\otimes\Nm_\pi(\xi^* L)=\Nm_\pi(L)^{-1}\otimes\Nm_\pi(L)=\cO_X$.

The following is a generalization to $n\geq 2$ of Lemma 1 of Mumford \cite{mumford:1971}.

\begin{proposition}\label{prop:psurjective}
The homomorphism $p$ is surjective and the same holds for the restriction of $p$ to the disjoint union $\displaystyle\bigsqcup_{i=0}^{n-1}\Pic^i(X_\gamma)$.
\end{proposition}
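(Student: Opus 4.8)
The plan is to reduce the surjectivity of $p$ to a statement about divisors, where the free action of the Galois group makes everything transparent. Write $G=\langle\xi\rangle\cong\Z_n$ for the Galois group of the unramified cover $\pi\colon X_\gamma\to X$; since $\pi$ is unramified and (for $\gamma\neq e$) connected, $G$ acts freely on $X_\gamma$. It has already been observed just before the statement that $p$ takes values in $\ker(\Nm_\pi)$, so only surjectivity needs proof. Two auxiliary facts will drive the argument. First, on divisors the composite $\pi^*\circ\Nm_\pi$ equals the norm operator $N=\sum_{j=0}^{n-1}\xi^{j*}$, because $\pi^*(\pi(q))=\sum_{j=0}^{n-1}\xi^{j*}q$ for every $q\in X_\gamma$ (here we use that $\pi$ is unramified, so each fibre is a free $G$-orbit of $n$ distinct points). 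Second, the norm map $\Nm_\pi\colon\cM(X_\gamma)^*\to\cM(X)^*$ on function fields is surjective.

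First I would take $M\in\ker(\Nm_\pi)$ and choose a divisor $E$ on $X_\gamma$ with $\cO_{X_\gamma}(E)\cong M$. Then $\Nm_\pi(E)=\divisor(f)$ for some $f\in\cM(X)^*$, since $\Nm_\pi(M)\cong\cO_X$. Using surjectivity of the function-field norm I would write $f=\Nm_\pi(h)$ with $h\in\cM(X_\gamma)^*$ and replace $E$ by $E-\divisor(h)$; this does not change the class $M$ but now makes $\Nm_\pi(E)=0$ as a divisor on $X$. Applying $\pi^*$ and the first auxiliary fact gives $N(E)=\pi^*(\Nm_\pi(E))=0$ in $\Div(X_\gamma)$.

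Next I would invoke an elementary module computation. Because $G$ acts freely, $\Div(X_\gamma)$ is a free $\Z[G]$-module (one copy of $\Z[G]$ per $G$-orbit of points), and for the cyclic group $G$ one has $\ker(N)=\im(\xi^*-1)$ already inside $\Z[G]$, both being the augmentation ideal. Hence $E=\xi^*D-D$ for some divisor $D$, and therefore $M\cong\cO_{X_\gamma}(E)\cong\xi^*\cO_{X_\gamma}(D)\otimes\cO_{X_\gamma}(D)^{-1}=p(\cO_{X_\gamma}(D))$, proving that $p$ surjects onto $\ker(\Nm_\pi)$. For the claim about the restriction to $\bigsqcup_{i=0}^{n-1}\Pic^i(X_\gamma)$, I would note that $p$ annihilates $\pi^*\Pic(X)$ (since $\xi^*\pi^*=\pi^*$) and that $\pi^*$ raises degrees by $n$; writing any $L$ of degree $i$ as $L=\pi^*B\otimes L'$ with $0\le\deg L'\le n-1$ gives $p(L)=p(L')$, so the restriction already has image all of $\ker(\Nm_\pi)$.

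The containment $\im(p)\subseteq\ker(\Nm_\pi)$ and the $\Z[G]$-module identity are purely formal; the one genuinely non-formal ingredient, and the step I expect to be the crux, is the surjectivity of the function-field norm used to make $\Nm_\pi(E)$ vanish exactly rather than merely up to principal divisors. This is exactly what distinguishes $\ker(\Nm_\pi)$ from the a priori larger $\ker(\pi^*\circ\Nm_\pi)$ (the two differ by the $\Z_n$ coming from $\ker(\pi^*)$), and it is where one must use that the base field is $\C$: by Hilbert's Theorem 90 together with Tsen's theorem (triviality of the Brauer group of $\C(X)$), the group $\cM(X)^*/\Nm_\pi\cM(X_\gamma)^*$ vanishes. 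An alternative packaging is to compute $\hat H^{-1}(G,\Pic(X_\gamma))\cong\Z_n$ from the exact sequences $0\to\mathrm{Prin}\to\Div\to\Pic\to 0$ and $0\to\C^*\to\cM(X_\gamma)^*\to\mathrm{Prin}\to 0$ (using cohomological triviality of $\Div$ and Hilbert 90 plus Tsen) and match it against $\ker(\pi^*\circ\Nm_\pi)/\ker(\Nm_\pi)\cong\Z_n$, but the divisor-level argument above is the more direct route.
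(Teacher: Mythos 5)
Your proposal is correct and follows essentially the same route as the paper: reduce to divisors, use the surjectivity of the function-field norm (Tsen/Lang, as the paper also cites) to arrange $\Nm_\pi(E)=0$ exactly, solve $E=\xi^*D-D$ orbit by orbit, and then adjust by $\pi^*$ of a line bundle on $X$ to land in degrees $0,\dots,n-1$. The only difference is presentational: where you invoke the standard fact that $\ker(N)=\im(\xi^*-1)$ in the free $\Z[\Z_n]$-module $\Div(X_\gamma)$, the paper carries out the equivalent explicit construction, choosing integers $k_1(l),\dots,k_n(l)$ with prescribed consecutive differences in each fibre.
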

\proof
Let $M\in\ker(\Nm_\pi)\subset\Jac(X_\gamma)$. Then $M$ must be isomorphic to $\cO_{X_\gamma}(F)$, for some degree $0$ divisor $F$, such that $\Nm_\pi(F)=\divisor(f)$, for some non-zero meromorphic function $f$ on $X$. But the norm map  \eqref{eq:Normfuncfields} on function fields is surjective (see \cite{lang:1952} and also \cite[p. 282]{arbarello-cornalba-griffiths-harris:1985}), hence $f=\Nm_\pi(g)$ for some $g\in\cM(X_\gamma)^*$. Let $G=\divisor(g)$.
Then $\Nm_\pi(G)=\Nm_\pi(\divisor(g))=\divisor(\Nm_\pi(g))=\divisor(f)=\Nm_\pi(F)$. 
Define $\bar F=F-G$. Then $\Nm_\pi(\bar F)=0$, hence $\bar F$ must be of the form
$\bar F=\sum_{p\in X_\gamma}\bar F(p)p$
with
\begin{equation}\label{eq:sum elements divisor =0}
\sum_{i=0}^{n-1}\bar F(\xi^i(p))=0.
\end{equation}

Now choose one and only one element in the support of $\bar F$ in each fibre of $\Nm_\pi$ over the support of $\Nm_\pi(\bar F)$. This yields a collection of points $p_1,\ldots,p_m$ in the support of $\bar F$ such that $\pi(p_i)\neq\pi(p_j)$, for $i\neq j$.  
For each $l=1,\ldots,m$, choose integers $k_1(l),\ldots,k_n(l)$ such that 
\begin{equation}\label{eq:relation-new-integers}
\bar F(\xi^i(p_l))=-k_{i+1}(l)+k_i(l)
\end{equation} for every $i=0,\ldots,n$ and where, by definition, $k_0(l)=k_n(l)$. This is possible (in an infinite number of ways) due to \eqref{eq:sum elements divisor =0}.
Define the divisor \[\tilde F=\sum_{l=1}^m\sum_{i=0}^{n-1}k_{i+1}(l)\xi^i(p_l)\] and the corresponding line bundle $\tilde L\cong\cO_{X_\gamma}(\tilde F)$. It follows from \eqref{eq:relation-new-integers} that 
$\cO_{X_\gamma}(\bar F)\cong\tilde L^{-1}\otimes\xi^*\tilde L$, that is, \[M\cong\tilde L^{-1}\otimes\xi^*\tilde L=p(\tilde L),\] because $M\cong\cO_{X_\gamma}(F)$, $\bar F=F-G$ and $\cO_{X_\gamma}(G)$ is trivial. Hence $p$ is surjective.

To show that its restriction to $\bigsqcup_{i=0}^{n-1}\Pic^i(X_\gamma)$ is also surjective, consider the same line bundle $M=p(\tilde L)$ and let $\tilde d$ be the degree of $\tilde L$.  Let $a\in\{0,\ldots,n-1\}$ be the reduction of $\tilde d$ modulo $n$ and choose a line bundle $M'$ on $X$, of degree $(\tilde d-a)/n$. Then $\deg(\tilde L\otimes \pi^* M')=a$ and 
\[M=\tilde L^{-1}\otimes \pi^*M^{\prime {-1}}\otimes\xi^*(\tilde L\otimes\pi^*M')=p(\tilde L\otimes\pi^*M'),\] completing the proof.
 \endproof

\begin{proposition}
The kernel of $p$ equals the image of $\pi^*$. Hence \[\ker(p)\cap\bigsqcup_{i=0}^{n-1}\Pic^i(X_\gamma)=\pi^*(\Pic^0(X))\cong\Pic^0(X)/\Z_n.\]
\end{proposition}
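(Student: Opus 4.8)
The plan is to recognize $\ker(p)$ as the fixed locus of the Galois action and then invoke Proposition~\ref{prop:kernel-image of pi*}. First I would observe that, by the definition \eqref{eq:projection p} of $p$, a line bundle $L\in\Pic(X_\gamma)$ lies in $\ker(p)$ precisely when $L^{-1}\otimes\xi^*L\cong\cO_{X_\gamma}$, that is, when $\xi^*L\cong L$. Since $\xi$ generates the Galois group $\Z_n$, and since (as noted before Proposition~\ref{prop:kernel-image of pi*}) a line bundle is fixed by $\Z_n$ if and only if it is fixed by $\xi$, this says exactly that $L$ is fixed by the Galois action. Hence $\ker(p)=\Pic(X_\gamma)^{\Z_n}$, and Proposition~\ref{prop:kernel-image of pi*} identifies this fixed locus with $\im(\pi^*)$, giving the first assertion.

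For the second assertion I would use the degree shift $\pi^*(\Pic^j(X))\subseteq\Pic^{nj}(X_\gamma)$. Writing $\ker(p)=\im(\pi^*)=\bigsqcup_{j\in\Z}\pi^*(\Pic^j(X))$ as a union graded by degree, the components landing in degrees $0,1,\dots,n-1$ are those with $nj\in\{0,\dots,n-1\}$; since $n\geq 2$ this forces $j=0$. Therefore $\ker(p)\cap\bigsqcup_{i=0}^{n-1}\Pic^i(X_\gamma)=\pi^*(\Pic^0(X))$.

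Finally, to obtain the displayed isomorphism I would restrict the kernel computation of Proposition~\ref{prop:kernel-image of pi*} to degree zero. That proposition gives $\ker(\pi^*)\cong\Z_n$, generated by $L_\gamma$, which is itself a degree-zero line bundle; thus the restriction $\pi^*\colon\Pic^0(X)\to\Pic^0(X_\gamma)$ has kernel exactly the cyclic group generated by $L_\gamma$, and we conclude $\pi^*(\Pic^0(X))\cong\Pic^0(X)/\Z_n$.

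This argument is almost entirely formal once Proposition~\ref{prop:kernel-image of pi*} is available; the substantive inputs (the surjectivity of the norm map on function fields, and the identification of the image of $\pi^*$ with the Galois-fixed locus) have already been established. The only point requiring the slightest care is the degree bookkeeping in the second step, where one must observe that multiplication by $n$ on degrees excludes every component of $\im(\pi^*)$ from the range $0,\dots,n-1$ except the degree-zero one. There is no genuine obstacle here.
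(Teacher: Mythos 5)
Your proof is correct and follows essentially the same route as the paper's: identify $\ker(p)$ with the $\xi$-fixed (hence $\Z_n$-fixed) locus, invoke Proposition~\ref{prop:kernel-image of pi*}, and use the degree shift $\pi^*(\Pic^j(X))\subseteq\Pic^{nj}(X_\gamma)$ to isolate the degree-zero component. The only difference is that you spell out the final isomorphism $\pi^*(\Pic^0(X))\cong\Pic^0(X)/\Z_n$ via the kernel computation, which the paper leaves implicit.
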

\proof
The kernel of $p$ is precisely given by the fixed points under $\xi$ (thus under $\Z_n$), hence the result follows immediately from Proposition \ref{prop:kernel-image of pi*}. The second part follows because $\pi^*(\Pic^i(X))\subset\Pic^{ni}(X_\gamma)$.
\endproof

The previous propositions can be summarized in the next corollary:

\begin{corollary}
The following sequence of groups is exact: \[0\to\Z_n\to\Pic(X)\xrightarrow{\pi^*}\Pic(X_\gamma)\xrightarrow{p}\ker(\Nm_\pi)\to 0.\]
Moreover, the restriction of $p$ to $\bigsqcup_{i=0}^{n-1}\Pic^i(X_\gamma)$, 
\begin{equation}\label{eq:projection p-restricted}
p:\bigsqcup_{i=0}^{n-1}\Pic^i(X_\gamma)\to\ker(\Nm_\pi)
\end{equation} is a holomorphic $\Pic^0(X)/\Z_n$-principal bundle.
\end{corollary}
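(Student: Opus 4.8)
The plan is to handle the two assertions separately: exactness of the four-term sequence is a formal assembly of the three propositions proved above, whereas the principal bundle claim is where the actual work lies. For exactness, injectivity of $\Z_n\to\Pic(X)$ and exactness at $\Pic(X)$ are both contained in Proposition~\ref{prop:kernel-image of pi*}, which identifies $\ker(\pi^*)$ with the cyclic group generated by $L_\gamma$; exactness at $\Pic(X_\gamma)$ is exactly the statement $\ker(p)=\im(\pi^*)$ of the proposition immediately preceding this corollary; and exactness at $\ker(\Nm_\pi)$ is the surjectivity of $p$ from Proposition~\ref{prop:psurjective}. Thus the first assertion requires no new argument.

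For the second assertion I would first fix the structure group. Since $\pi^*$ restricted to $\Pic^0(X)$ has kernel $\Z_n$ by Proposition~\ref{prop:kernel-image of pi*}, we have $\pi^*(\Pic^0(X))\cong\Pic^0(X)/\Z_n$, and this subgroup of $\Jac(X_\gamma)$ acts on $E:=\bigsqcup_{i=0}^{n-1}\Pic^i(X_\gamma)$ by tensoring, preserving each component $\Pic^i(X_\gamma)$ because it has degree zero. Three of the four torsor requirements are then immediate. The action is \emph{free}, since $\pi^*M\otimes L\cong L$ forces $\pi^*M\cong\cO_{X_\gamma}$. The fibres of $p$ are exactly the orbits: if $p(L)=p(L')$ then $N:=L'\otimes L^{-1}\in\ker(p)=\im(\pi^*)$ by the preceding proposition, and since $\im(\pi^*)$ meets $\Pic^j(X_\gamma)$ only for $n\mid j$ while $0\le\deg L,\deg L'\le n-1$, we must have $\deg N=0$, so $N\in\pi^*(\Pic^0(X))$ and $L,L'$ lie in a single orbit contained in one component $\Pic^i(X_\gamma)$. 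Surjectivity onto $\ker(\Nm_\pi)$ is Proposition~\ref{prop:psurjective}.

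The remaining and genuinely substantial step is local triviality. Here I would observe that $\phi:=p|_{\Pic^0(X_\gamma)}\colon\Jac(X_\gamma)\to\Jac(X_\gamma)$, $N\mapsto N^{-1}\otimes\xi^*N$, is a homomorphism of abelian varieties with connected kernel $\pi^*(\Pic^0(X))$ and image inside $\ker(\Nm_\pi)$, and that on the component $\Pic^i(X_\gamma)$, after trivializing the torsor by a base point $L_0^{(i)}$, the map $p$ becomes $\phi$ followed by translation by $p(L_0^{(i)})$. The fact that fibres lie in a single component shows that the images $p(\Pic^i(X_\gamma))=p(L_0^{(i)})\cdot\im(\phi)$ are pairwise disjoint cosets of the closed subgroup $P:=\im(\phi)\subseteq\ker(\Nm_\pi)$, and by surjectivity they exhaust $\ker(\Nm_\pi)$; being a finite disjoint cover by closed, connected cosets, each such coset is open and therefore a connected component of $\ker(\Nm_\pi)$. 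Over each component the map $p$ is a translate of the surjective homomorphism of complex tori $\phi$ with connected kernel $\pi^*(\Pic^0(X))$, and I would finish by invoking the standard fact that such a homomorphism is a holomorphic principal bundle for its kernel: lifting to universal covers, the induced $\C$-linear surjection splits, producing holomorphic sections over polydiscs and hence $\pi^*(\Pic^0(X))$-equivariant local trivializations.

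The main obstacle is precisely this last step. Everything else is bookkeeping with the earlier propositions, but here one must correctly pass from the disjoint-union map to an honest homomorphism of abelian varieties, keeping track of the translations on the non-identity components and checking that the connected-component structure of $\ker(\Nm_\pi)$ is matched by that of $E$, before the linear splitting argument for complex tori can be applied.
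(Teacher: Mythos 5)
Your proof is correct and follows essentially the same route as the paper, which states this corollary without a separate proof, presenting it as a direct summary of the three preceding propositions (the computation of $\ker(\pi^*)$ and $\im(\pi^*)$, the surjectivity of $p$ and of its restriction to $\bigsqcup_{i=0}^{n-1}\Pic^i(X_\gamma)$, and the identity $\ker(p)=\im(\pi^*)$), exactly as in your assembly of the exactness statement. The one point the paper leaves implicit is the local triviality of the restricted map $p$, and your argument for it --- checking freeness and that fibres are single orbits via the degree constraint, identifying $p$ on each component $\Pic^i(X_\gamma)$ with a translate of the homomorphism $N\mapsto N^{-1}\otimes\xi^*N$ of abelian varieties with connected kernel $\pi^*(\Pic^0(X))$, and splitting on universal covers --- is the standard and correct way to supply that detail.
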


The following is now immediate from the stated property of the map \eqref{eq:projection p-restricted}. 
\begin{corollary}\label{cor:connectedcompKerNm}
The kernel $\ker(\Nm_\pi)$ of the norm map \eqref{eq:Normlinebundles} has $n$ connected components, which are labeled by the $n$ connected components of $\bigsqcup_{i=0}^{n-1}\Pic^i(X_\gamma)$ via the group homomorphism \eqref{eq:projection p}.
\end{corollary}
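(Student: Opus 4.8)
The plan is to read the statement off directly from the principal-bundle structure of the map \eqref{eq:projection p-restricted} established in the previous corollary, with no new geometric input required. First I would count the connected components of the total space $\bigsqcup_{i=0}^{n-1}\Pic^i(X_\gamma)$: each $\Pic^i(X_\gamma)$ is a torsor under the connected abelian variety $\Jac(X_\gamma)=\Pic^0(X_\gamma)$ and hence is connected, and the $n$ pieces indexed by $i=0,\dots,n-1$ are pairwise disjoint, so the total space has exactly $n$ connected components.

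Next I would note that the structure group $\Pic^0(X)/\Z_n$ is connected, being the quotient of the connected group $\Pic^0(X)=\Jac(X)$ by the finite subgroup $\Z_n=\ker(\pi^*)$. The key fact I would invoke is the elementary topological statement that a fiber bundle with connected fiber induces a bijection between the connected components of its total space and those of its base: over any connected component $C$ of $\ker(\Nm_\pi)$ the restriction of \eqref{eq:projection p-restricted} is a fiber bundle with connected fiber $\Pic^0(X)/\Z_n$ over the connected base $C$, so $p^{-1}(C)$ is connected, and since $p$ is open and surjective this gives the bijection. Combining this with the count above, $\ker(\Nm_\pi)$ has exactly $n$ connected components, each being the image under $p$ of precisely one of the pieces $\Pic^i(X_\gamma)$; this is the asserted labeling via \eqref{eq:projection p}.

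There is essentially no obstacle in this argument. The only point deserving a moment's care is the topological fact about fiber bundles with connected fiber, which follows from $p$ being open and surjective with all fibers connected and mutually homeomorphic.
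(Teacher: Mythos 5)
Your argument is correct and is essentially the paper's: the authors simply declare the corollary ``immediate from the stated property of the map \eqref{eq:projection p-restricted}'', and what you have written is precisely the spelled-out version of that — connectedness of each $\Pic^i(X_\gamma)$ and of the fiber $\Pic^0(X)/\Z_n$, plus the standard fact that a fiber bundle with connected fibers induces a bijection on connected components.
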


Recall from Definition \ref{def:Prym} that the Prym variety of $X_\gamma$ associated to the covering $\pi:X_\gamma\to X$ is the abelian variety defined as the connected component of $\ker(\Nm_\pi)$ containing the identity:
\[\Prym_\pi(X_\gamma)=\ker(\Nm_\pi)_0.\]
Note that now we do not have the equality corresponding to \eqref{eq:Prym=ker-ramified}.

\begin{proposition}\label{prop:connectedcompKerNm-deg}
Two line bundles $M_1$ and $M_2$ are in the same connected component of the kernel of $\Nm_\pi$ if and only if $M_1=L_1^{-1}\otimes\xi^*L_1$ and $M_2=L_2^{-1}\otimes\xi^*L_2$, with $\deg(L_1)=\deg(L_2)$. In particular $\Prym_\pi(X_\gamma)$ is the subspace of $\ker(\Nm_\pi)$ consisting of those line bundles of the form $L^{-1}\otimes\xi^*L$, with $\deg(L)=0$.
\end{proposition}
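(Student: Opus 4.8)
The plan is to reduce everything to a single $\Z_n$-valued invariant of a line bundle in $\ker(\Nm_\pi)$ and to identify this invariant with the labeling of connected components supplied by Corollary~\ref{cor:connectedcompKerNm}. Since the homomorphism $p$ of \eqref{eq:projection p} is surjective (Proposition~\ref{prop:psurjective}), every $M\in\ker(\Nm_\pi)$ can be written as $M=L^{-1}\otimes\xi^*L=p(L)$ for some $L\in\Pic(X_\gamma)$; the key observation will be that the residue class $\deg(L)\bmod n$ is independent of this choice.

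First I would establish this well-definedness. If $p(L)=p(L')$ then $L\otimes L'^{-1}$ is fixed by $\xi$, hence lies in $\Pic(X_\gamma)^{\Z_n}=\im(\pi^*)$ by Proposition~\ref{prop:kernel-image of pi*}; writing $L\otimes L'^{-1}\cong\pi^*N$ and recalling that $\pi^*$ multiplies degrees by $n$ gives $\deg(L)\equiv\deg(L')\pmod n$. In the other direction, for any $N\in\Pic(X)$ one has $p(L\otimes\pi^*N)=p(L)$, because $\pi\circ\xi=\pi$ forces $\xi^*\pi^*N\cong\pi^*N$; since $\deg(L\otimes\pi^*N)=\deg(L)+n\deg(N)$, a given $M=p(L)$ admits preimages of \emph{every} degree in the class $\deg(L)\bmod n$. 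Thus $\delta(M):=\deg(L)\bmod n$ is a well-defined element of $\Z_n$, realized by preimages of each degree in its residue class.

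Next I would match $\delta$ with the connected components. By Corollary~\ref{cor:connectedcompKerNm} the $n$ components of $\ker(\Nm_\pi)$ are precisely the images $p(\Pic^i(X_\gamma))$ for $i=0,\dots,n-1$, each connected since $\Pic^i(X_\gamma)$ is a torsor over $\Jac(X_\gamma)$, and these are distinct and exhaust $\ker(\Nm_\pi)$. Hence $M=p(L)$ lies in the component indexed by $\delta(M)$. The asserted criterion then follows: if $M_1,M_2$ lie in the same component then $\delta(M_1)=\delta(M_2)=i$, and the realization statement lets me choose $L_1,L_2\in\Pic^i(X_\gamma)$ with $M_j=L_j^{-1}\otimes\xi^*L_j$ and $\deg(L_1)=\deg(L_2)=i$; conversely, if $M_j=L_j^{-1}\otimes\xi^*L_j$ with $\deg(L_1)=\deg(L_2)$ then $\delta(M_1)=\delta(M_2)$. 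For the ``in particular'' clause, the identity component $\Prym_\pi(X_\gamma)$ contains $\cO_{X_\gamma}=p(\cO_{X_\gamma})$, so it is the $\delta=0$ component $p(\Pic^0(X_\gamma))=\{L^{-1}\otimes\xi^*L\mid\deg(L)=0\}$.

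The whole argument is bookkeeping once the preceding results are available; the one genuinely load-bearing step is the well-definedness of $\delta$, which rests on the identification $\ker(p)=\im(\pi^*)$ together with the fact that pullback scales degrees by $n$. The only mild subtlety I expect is ensuring that the freedom to twist $L$ by $\pi^*N$ allows one to match degrees \emph{exactly} (not merely modulo $n$) when two bundles share a component --- which is exactly what the realization statement in the second paragraph provides.
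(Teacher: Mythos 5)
Your proof is correct and follows essentially the same route as the paper, which states this proposition without proof as an immediate consequence of the preceding results: the surjectivity of $p$ (Proposition~\ref{prop:psurjective}), the identification $\ker(p)=\im(\pi^*)=\Pic(X_\gamma)^{\Z_n}$ (Proposition~\ref{prop:kernel-image of pi*}), and the labeling of the components of $\ker(\Nm_\pi)$ by the components of $\bigsqcup_{i=0}^{n-1}\Pic^i(X_\gamma)$ (Corollary~\ref{cor:connectedcompKerNm}). Your explicit verification that $\deg(L)\bmod n$ is a well-defined invariant of $M=p(L)$, and that every residue in its class is realized by twisting $L$ by $\pi^*N$, is exactly the bookkeeping the paper leaves implicit.
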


Thus the following sequence of groups is exact:
\[
0\to\Z_n\to\Pic^0(X)\xrightarrow{\pi^*}\Pic^0(X_\gamma)\xrightarrow{p}\Prym_\pi(X_\gamma)\to 0,
\]
hence 
\begin{equation}\label{eq:dimPrym}
\dim(\Prym_\pi(X_\gamma))=(n-1)(g-1).
\end{equation} 

We shall need a generalization of Proposition \ref{prop:connectedcompKerNm-deg} to any fibre of the norm map, and not only its kernel. That is easily achieved since such fibre is a torsor for the kernel, hence being isomorphic to $\ker(\Nm_\pi)$ although not canonically. Let then $\Lambda$ be a degree $d$, holomorphic line bundle over $X$. Choose an arbitrary line bundle $L_0\in\Nm_\pi^{-1}(\Lambda)$. Given this choice, we have the obvious isomorphism
\[
\ker(\Nm_\pi)\xrightarrow{\cong}\Nm_\pi^{-1}(\Lambda),\qquad M\mapsto M\otimes L_0.
\]
Consider the union $\bigsqcup_{i=d}^{d+n-1}\Pic^i(X_\gamma)$. The same kind of isomorphism holds,
\[
\bigsqcup_{i=0}^{n-1}\Pic^i(X_\gamma)\xrightarrow{\cong}\bigsqcup_{i=d}^{d+n-1}\Pic^i(X_\gamma),\qquad L\mapsto L\otimes L_0
\]
and we have the analogue of the restriction of the map $p$ to $\bigsqcup_{i=0}^{n-1}\Pic^i(X_\gamma)$,
\begin{equation}\label{eq:pL0}
p_{L_0}:\bigsqcup_{i=d}^{d+n-1}\Pic^i(X_\gamma)\to\Nm_\pi^{-1}(\Lambda),\qquad p_{L_0}(L)=L^{-1}\otimes\xi^*L\otimes L_0.
\end{equation}
Hence the following diagram commutes:
\[\displaystyle{\begin{CD}
    \displaystyle\bigsqcup_{i=0}^{n-1}\Pic^i(X_\gamma)
      @>\cong>> \displaystyle\bigsqcup_{i=d}^{d+n-1}\Pic^i(X_\gamma) \\
    @VpVV @VVp_{L_0}V \\
\ker(\Nm_\pi) @>>\cong> \Nm_\pi^{-1}(\Lambda)\ ,
  \end{CD}}\]

The twisted version of the previous results reads then as follows. This result (and the particular case of Corollary \ref{cor:connectedcompKerNm}) goes back at least to Narasimhan--Ramanan \cite{narasimhan-ramanan:1975}.

\begin{proposition}\label{prop:connectedcompKerNm-deg=d}
The map $p_{L_0}$ is a holomorphic $\Pic^0(X)/\Z_n$-principal bundle and the $n$ connected components of $\Nm_\pi^{-1}(\Lambda)$ are labeled by the degree $i\in\{d,\ldots,d+n-1\}$. Moreover, $\Pic^i(X)$ is a holomorphic $\Pic^0(X)/\Z_n$-principal bundle over a connected component of $\Nm_\pi^{-1}(\Lambda)$.
\end{proposition}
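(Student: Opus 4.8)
The plan is to deduce everything from the untwisted statements by transporting them along the commutative diagram that precedes the proposition, whose horizontal arrows are the translation maps $L\mapsto L\otimes L_0$ (top) and $M\mapsto M\otimes L_0$ (bottom); write $\tau$ for the translation-by-$L_0$ isomorphism on whichever space it acts. First I would record that, since $\pi$ is unramified, the norm map preserves degrees, so $\deg L_0=\deg\Nm_\pi(L_0)=\deg\Lambda=d$; hence $\tau$ carries $\bigsqcup_{i=0}^{n-1}\Pic^i(X_\gamma)$ isomorphically onto $\bigsqcup_{i=d}^{d+n-1}\Pic^i(X_\gamma)$, matching the degree shift appearing in the diagram and in the definition \eqref{eq:pL0} of $p_{L_0}$.

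Next I would make the group action explicit. The structure group $\Pic^0(X)/\Z_n\cong\pi^*(\Pic^0(X))$ acts on $\bigsqcup_{i=0}^{n-1}\Pic^i(X_\gamma)$ by $L\mapsto L\otimes\pi^*N$, which is well defined since $\pi^*L_\gamma\cong\cO_{X_\gamma}$ (cf.\ Proposition \ref{prop:kernel-image of pi*}), and the fibres of $p$ in \eqref{eq:projection p} are precisely the orbits of this action --- this is the content of the principal bundle structure established above for the map \eqref{eq:projection p-restricted}. The key observation is that $\tau$ is equivariant for this action, since $\tau(L\otimes\pi^*N)=\tau(L)\otimes\pi^*N$; consequently $\tau$ descends on the orbit spaces to exactly the bottom translation isomorphism, and the diagram gives $p_{L_0}=\tau\circ p\circ\tau^{-1}$ as maps of $\Pic^0(X)/\Z_n$-spaces. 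Since $p$ is a holomorphic principal $\Pic^0(X)/\Z_n$-bundle and $\tau$ is a biholomorphic equivariant isomorphism intertwining the two projections, $p_{L_0}$ is likewise a holomorphic principal $\Pic^0(X)/\Z_n$-bundle.

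To count components I would use that the typical fibre $\Pic^0(X)/\Z_n$ is connected, being a quotient of the connected group $\Pic^0(X)$. For a fibre bundle with connected fibre the projection induces a bijection on connected components; since the total space $\bigsqcup_{i=d}^{d+n-1}\Pic^i(X_\gamma)$ has exactly $n$ components, one for each degree $i\in\{d,\dots,d+n-1\}$ (each $\Pic^i(X_\gamma)$ being a connected torsor over $\Jac(X_\gamma)$), the base $\Nm_\pi^{-1}(\Lambda)$ has exactly $n$ connected components, naturally labelled by $i$ via $p_{L_0}$. Restricting $p_{L_0}$ to the single component $\Pic^i(X_\gamma)$ of the total space then yields the holomorphic principal $\Pic^0(X)/\Z_n$-bundle over the corresponding component of $\Nm_\pi^{-1}(\Lambda)$ asserted in the last sentence, which is the twisted analogue of Proposition \ref{prop:connectedcompKerNm-deg}.

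The argument is essentially formal once the untwisted results are in hand, so I do not anticipate a genuine obstacle. The only two points requiring care are verifying the equivariance of $\tau$, so that the diagram is an isomorphism of \emph{principal bundles} and not merely of varieties, and invoking the connectedness of the fibre $\Pic^0(X)/\Z_n$ in the component count, without which the labelling of the components of $\Nm_\pi^{-1}(\Lambda)$ by degree would fail.
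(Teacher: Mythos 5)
Your proof takes the same route as the paper, which gives no separate argument for this proposition beyond the commutative diagram preceding it and the remark that it is the twisted version of the untwisted results; you have made the transport explicit and correctly supplied the two points that genuinely need checking, namely the equivariance of the translation and the connectedness of the fibre $\Pic^0(X)/\Z_n$ in the component count.

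One identity in your ``key observation'' is, however, false as stated: $p_{L_0}=\tau\circ p\circ\tau^{-1}$ holds only when $\xi^*L_0\cong L_0$. A direct computation gives $\tau\bigl(p(\tau^{-1}(L))\bigr)=L^{-1}\otimes\xi^*L\otimes L_0^{2}\otimes(\xi^*L_0)^{-1}=p_{L_0}(L)\otimes\bigl(L_0\otimes(\xi^*L_0)^{-1}\bigr)$, so the two maps differ by translation of the base $\Nm_\pi^{-1}(\Lambda)$ by $p(L_0)^{-1}=L_0\otimes(\xi^*L_0)^{-1}\in\ker(\Nm_\pi)$, which is nontrivial unless $L_0$ is Galois-invariant. (The paper's own diagram suffers from exactly the same defect: going around the two ways yields $p(L)\otimes\xi^*L_0$ versus $p(L)\otimes L_0$.) The slip is harmless for every conclusion you draw: translation by an element of $\ker(\Nm_\pi)$ is a biholomorphism of $\Nm_\pi^{-1}(\Lambda)$ commuting with the $\Pic^0(X)/\Z_n$-action and merely permuting its connected components, so $p_{L_0}$ is a principal bundle if and only if $\tau\circ p\circ\tau^{-1}$ is, and the labelling of components by the degree $i$ is only composed with a fixed permutation. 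The cleanest repair is to avoid conjugation altogether and write $p_{L_0}=t_{L_0}\circ p|_{\bigsqcup_{i=d}^{d+n-1}\Pic^i(X_\gamma)}$, where $t_{L_0}$ is translation by $L_0$ on the base: the restriction of $p$ to any window of $n$ consecutive degrees is a $\Pic^0(X)/\Z_n$-principal bundle onto $\ker(\Nm_\pi)$ by the same argument as for \eqref{eq:projection p-restricted}, since each coset of $\pi^*\Pic(X)$ meets such a window in a single $\pi^*\Pic^0(X)$-orbit. Finally, note that $\Pic^i(X)$ in the last sentence of the statement should be read as $\Pic^i(X_\gamma)$, as you implicitly did.
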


Thus each connected component of $\Nm_\pi^{-1}(\Lambda)$ is a torsor for $\Prym_\pi(X_\gamma)$.

\subsection{The action of the Galois group}

We now wish to see how the Galois group $\Z_n$ acts on the on
components of the fibre of the norm map. This is not strictly
necessary for what follows but we include it for completeness.

We continue with an unramified $n$-cover (\ref{eq:unram n-cover}) and
a line bundle $\Lambda$ over $X$ of degree $d$. Let
$\pi_0(\Nm_\pi^{-1}(\Lambda))$ be the set consisting of the $n$ connected
components of the fibre of the norm map of over $\Lambda$. Let $(n,d)$
denote the greatest common divisor of $n$ and $d$.

\begin{proposition}
The $\Z_n$-orbit of any element of $\pi_0(\Nm_\pi^{-1}(\Lambda))$ has $n/(n,d)$ elements. In particular, $\Z_n$ acts trivially on $\pi_0(\Nm_\pi^{-1}(\Lambda))$ if and only if $d$ is a multiple of $n$ and acts transitively if and only if $n$ and $d$ are coprime.
\end{proposition}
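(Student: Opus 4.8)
The plan is to make the $\Z_n$-action on $\pi_0(\Nm_\pi^{-1}(\Lambda))$ completely explicit by tracking how the Galois generator $\xi$ permutes the degree-labels of the connected components furnished by Proposition~\ref{prop:connectedcompKerNm-deg=d}. First I would check that $\Z_n=\langle\xi\rangle$ really does act on $\Nm_\pi^{-1}(\Lambda)$ by pullback: since $\pi\circ\xi=\pi$, the norm map satisfies $\Nm_\pi\circ\xi^*=\Nm_\pi$ (immediate on divisors from Definition~\ref{def:NmDiv}), so $\xi^*$ preserves each fibre $\Nm_\pi^{-1}(\Lambda)$ and hence acts on the finite set of its $n$ connected components.

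Next I would use the parametrization $p_{L_0}(L)=L^{-1}\otimes\xi^*L\otimes L_0$ of Proposition~\ref{prop:connectedcompKerNm-deg=d}, which exhibits the components as the images of the $\Pic^i(X_\gamma)$ and thus labels them by $i=\deg L \bmod n\in\Z_n$. The heart of the argument is the single line-bundle identity
\[
\xi^*\bigl(p_{L_0}(L)\bigr)=p_{L_0}\bigl(\xi^*L\otimes L_0\bigr),
\]
which I would verify by expanding both sides: the left-hand side equals $(\xi^*L)^{-1}\otimes(\xi^*)^2L\otimes\xi^*L_0$, and substituting $\tilde L=\xi^*L\otimes L_0$ into $p_{L_0}$ produces the same bundle, the factors $L_0^{-1}$ and $L_0$ cancelling. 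Since $\deg(\xi^*L\otimes L_0)=\deg L+d$, this shows that $\xi$ acts on the labels $\Z_n$ by translation by $d$.

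Consequently the orbit through the component labelled $i$ is the coset $i+\langle d\rangle$, where $\langle d\rangle\subseteq\Z_n$ is the cyclic subgroup generated by $d$. As $\abs{\langle d\rangle}=n/(n,d)$, every orbit has exactly $n/(n,d)$ elements, which is the first assertion. The two special cases then follow at once: the action is trivial precisely when $\langle d\rangle=\{0\}$, i.e.\ $n\mid d$; and it is transitive precisely when $\langle d\rangle=\Z_n$, i.e.\ $(n,d)=1$.

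The only point requiring genuine care — the main (though mild) obstacle — is pinning down that the shift is exactly $d$ and not some other multiple. This rests on confirming that the relevant invariant really is the degree of the $p_{L_0}$-preimage taken modulo $n$: this is well-defined because $\ker(p)=\im(\pi^*)$ and $\pi^*$ raises degrees by $n$, so altering $L$ within a fibre of $p_{L_0}$ changes $\deg L$ only by multiples of $n$. Once this is in place, the displayed identity does all the work and no further computation is needed.
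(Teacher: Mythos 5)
Your proposal is correct and follows essentially the same route as the paper: both arguments rest on the identity $\xi^*\bigl(p_{L_0}(L)\bigr)=p_{L_0}(\xi^*L\otimes L_0)$ together with the labelling of components by $\deg L\bmod n$ from Proposition~\ref{prop:connectedcompKerNm-deg=d}, so that $\xi$ translates the label by $d$. Your packaging of the conclusion via the subgroup $\langle d\rangle\subseteq\Z_n$ is a slightly cleaner way of stating the orbit count, but the content is identical.
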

\proof
Let $M$ be a line bundle of degree $d$ such that $\Nm_\pi(M)=\Lambda$. Then $M=p_{L_0}(L)$ for some degree $i\in\{d,\ldots,d+n-1\}$ line bundle $L$ over $X_\gamma$, where $p_{L_0}$ is defined in \eqref{eq:pL0},
\[M=L^{-1}\otimes\xi^*L\otimes L_0.\]
By Proposition \ref{prop:connectedcompKerNm-deg=d}, the component of $\Nm_\pi^{-1}(\Lambda)$ where $M$ lies is determined by the degree $i$ of $L$. Since 
\[\xi^*M=\xi^*L^{-1}\otimes\xi^*\xi^*L\otimes\xi^*L_0=(\xi^*L\otimes L_0)^{-1}\otimes\xi^*(\xi^*L\otimes L_0)\otimes L_0=p_{L_0}(\xi^*L\otimes L_0)\]
then $\xi^*M$ lies in the component determined by the degree of $\xi^*L\otimes L_0$, which is $i+d$. So $\xi^*M$ is in the same connected component as $M$ is and only if $i+d$ is equal to $i$ modulo $n$, that is $d$ is a multiple of $n$. 

 If $n$ does not divide $d$ then, from what we saw, the orbit of $M$ on $\pi_0(\Nm_\pi^{-1}(\Lambda))$ is determined by class in $\Z_n$ of the numbers $i+jd$, with $j=0,\ldots, n-1$. Hence we conclude that the orbit of $M$ runs over $n/(n,d)$ different connected components of $\Nm_\pi^{-1}(\Lambda)$. 
\endproof

We know that $\Pic(X_\gamma)^{\Z_n}$ is the image of $\pi^*$.
Let us now see what is its intersection with a fibre of $\Nm_\pi$.

\begin{proposition}
The intersection $\Pic(X_\gamma)^{\Z_n}\cap\Nm_\pi^{-1}(\Lambda)$ is the image of $\pi^*|_{\Gn}$. In particular, it is empty if $d$ is not a multiple of $n$. If not empty, it has $n^{2g-1}$ elements.
\end{proposition}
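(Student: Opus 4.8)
The plan is to reduce everything to the pullback map via Proposition \ref{prop:kernel-image of pi*}, which identifies $\Pic(X_\gamma)^{\Z_n}$ with $\im(\pi^*)$. Hence every element of the intersection is of the form $\pi^*M$ for some $M\in\Pic(X)$, and the condition to lie in $\Nm_\pi^{-1}(\Lambda)$ becomes an equation on $M$. The first step is the elementary compatibility
\[
\Nm_\pi\circ\pi^*=[n]\colon\Pic(X)\to\Pic(X),\qquad M\mapsto M^n,
\]
which I would verify on divisors: if $M\cong\cO_X(\sum_p a_p\,p)$ then $\pi^*M\cong\cO_{X_\gamma}\big(\sum_p a_p\,\pi^{-1}(p)\big)$, and since $\pi$ is unramified of degree $n$ each fibre $\pi^{-1}(p)$ consists of $n$ distinct points, so $\Nm_\pi(\pi^{-1}(p))=n\,p$ by Definition \ref{def:NmDiv}; summing gives $\Nm_\pi(\pi^*M)\cong M^n$.

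It follows that $\pi^*M\in\Nm_\pi^{-1}(\Lambda)$ if and only if $M^n\cong\Lambda$, so the intersection is exactly $\pi^*(R_\Lambda)$, where $R_\Lambda=\{M\in\Pic(X)\st M^n\cong\Lambda\}$ is the set of $n$-th roots of $\Lambda$. Taking degrees in $M^n\cong\Lambda$ forces $n\deg(M)=d$; thus $R_\Lambda=\emptyset$ whenever $n\nmid d$, giving the emptiness statement.

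When $n\mid d$, fixing one root $M_0\in R_\Lambda$ gives $R_\Lambda=M_0\otimes\Gn$, a torsor for $\Gn=\Jac_{[n]}(X)$ of cardinality $|\Gn|=n^{2g}$. Applying $\pi^*$ yields $\pi^*(R_\Lambda)=\pi^*(M_0)\otimes\pi^*(\Gn)$, a torsor for the image $\pi^*(\Gn)$ of $\pi^*|_{\Gn}$; this is the identification claimed in the statement. To get the count, recall from Proposition \ref{prop:kernel-image of pi*} that $\ker(\pi^*)=\langle L_\gamma\rangle\cong\Z_n$, and that $L_\gamma^n\cong\cO_X$ places this kernel inside $\Gn$. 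Consequently the coset $M_0\otimes\Gn$ is a union of $\langle L_\gamma\rangle$-cosets and $\pi^*$ is exactly $n$-to-one on it, so $\#\pi^*(R_\Lambda)=n^{2g}/n=n^{2g-1}$.

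The one point requiring care is this last cardinality bookkeeping: the value $n^{2g-1}$ rather than $n^{2g}$ hinges precisely on the inclusion $\ker(\pi^*)\subseteq\Gn$, which guarantees that each fibre of $\pi^*$ meeting $R_\Lambda$ has the full size $n$. Everything else is the standard norm/pullback formalism combined with the structural facts about $\pi^*$ and $\Nm_\pi$ already established.
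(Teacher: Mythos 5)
Your argument is correct and follows essentially the same route as the paper: identify the $\Z_n$-fixed locus with $\im(\pi^*)$, use $\Nm_\pi(\pi^*M)\cong M^n$ to reduce to the $n$-th roots of $\Lambda$ (empty unless $n\mid d$, otherwise $n^{2g}$ of them), and quotient by $\ker(\pi^*)=\langle L_\gamma\rangle$ to get $n^{2g-1}$. Your version merely spells out the divisor-level verification of $\Nm_\pi\circ\pi^*=[n]$ and the fibre-counting for $\pi^*$ on the torsor, which the paper leaves implicit.
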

\proof
Since $\Z_n$ is generated by $\xi$, it is enough to consider a line bundle $L\in\Nm_\pi^{-1}(\Lambda)$ such that $L\cong\xi^*L$. By Proposition \ref{prop:kernel-image of pi*}, we have that $L\cong\pi^*N$ for some line bundle $N$ over $X$. Then $\Lambda\cong\Nm_\pi(\pi^*N)\cong N^n$. This is only possible if $d=n\deg(N)$, showing that otherwise the intersection is empty.

This proves that the intersection is the image, under $\pi^*$, of the $n^{2g}$ $n$th-roots of $\Lambda$. But this image only has $n^{2g-1}$ elements since two such roots are pulled-back to the same element whenever they differ by a power of $L_\gamma$.
\endproof

\subsection{The action of $\Gn$ and the Weil pairing}

Now we consider the action of $\Gn$ on $\Nm_\pi^{-1}(\Lambda)$. An element $\delta\in\Gn$ acts on $M\in\Nm_\pi^{-1}(\Lambda)$ by 
\begin{equation}\label{eq:actGnkerNm}
M\mapsto M\otimes \pi^*L_\delta,
\end{equation} where $L_\delta$ is the $n$-torsion line bundle on $X$ corresponding to $\delta$. Indeed,  $\Nm_\pi(\pi^*L_\delta)=L_\delta^n\cong\cO_X$, therefore $\Nm_\pi(M\otimes\pi^*L_\delta)=\Lambda$.

\begin{proposition}
The subgroup $\Z_n=\langle\gamma\rangle\subset\Gn$ acts trivially  on $\Nm_\pi^{-1}(\Lambda)$, and the $\Gn$-action induces a free action of $\Gn/\Z_n\cong\Z_n^{2g-1}$ on $\Nm_\pi^{-1}(\Lambda)$.
\end{proposition}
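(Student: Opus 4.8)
The plan is to compute, once and for all, the stabilizer in $\Gn$ of an arbitrary point $M\in\Nm_\pi^{-1}(\Lambda)$; both assertions then follow immediately once this stabilizer is identified with the cyclic subgroup $\langle\gamma\rangle$. Recall that $\Gn$ acts by $\delta\cdot M=M\otimes\pi^*L_\delta$, where the correspondence $\delta\mapsto L_\delta$ is the isomorphism identifying $\Gn$ with the $n$-torsion subgroup $\Jac_{[n]}(X)\subset\Pic^0(X)$, and that the action is well defined on the fibre because $\Nm_\pi(\pi^*L_\delta)=L_\delta^n\cong\cO_X$. Since tensoring is cancellative in the Picard group, $\delta\cdot M\cong M$ holds if and only if $\pi^*L_\delta\cong\cO_{X_\gamma}$, a condition manifestly independent of $M$. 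Thus every point of $\Nm_\pi^{-1}(\Lambda)$ has one and the same stabilizer, namely the kernel of the restriction $\pi^*|_{\Gn}\colon\Gn\to\Pic^0(X_\gamma)$.

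First I would dispose of the triviality claim. The element $\gamma$ corresponds to the line bundle $L_\gamma$ defining the cover, and we have already observed that its pullback $\pi^*L_\gamma\cong\cO_{X_\gamma}$ is canonically trivialised by the tautological section $\lambda$; hence $\pi^*L_\gamma^j\cong\cO_{X_\gamma}$ for all $j$, so every element of $\langle\gamma\rangle$ fixes every $M$, i.e.\ $\langle\gamma\rangle$ acts trivially. For the freeness claim I would invoke Proposition~\ref{prop:kernel-image of pi*}, which states precisely that $\ker(\pi^*)\cong\Z_n$ is generated by $L_\gamma$. As $L_\gamma$ is itself $n$-torsion we have $\langle L_\gamma\rangle\subset\Jac_{[n]}(X)$, so restricting to the $n$-torsion gives $\ker(\pi^*|_{\Gn})=\langle L_\gamma\rangle$. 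Translating back through $\delta\leftrightarrow L_\delta$, the common stabilizer computed above is exactly $\langle\gamma\rangle$. Therefore the only elements of $\Gn$ fixing any point are those of $\langle\gamma\rangle$, which is precisely the statement that the induced action of the quotient $\Gn/\langle\gamma\rangle$ is free.

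Finally, to identify the quotient group, recall that $\Gn\cong\Z_n^{2g}$ and $\langle\gamma\rangle\cong\Z_n$ (using that $n$ is prime and $\gamma\neq e$); since $n$ is prime, $\Gn$ is a $2g$-dimensional vector space over the field $\Z_n$ and $\langle\gamma\rangle$ is a line, so $\Gn/\langle\gamma\rangle\cong\Z_n^{2g-1}$, completing the proof. The argument is essentially bookkeeping: the only substantive inputs are Proposition~\ref{prop:kernel-image of pi*} and the triviality of $\pi^*L_\gamma$, and the single point requiring a little care is the observation that the stabilizer is independent of the chosen point $M$. It is exactly this uniformity that lets us pass cleanly from ``$\langle\gamma\rangle$ acts trivially'' and ``the stabilizer equals $\langle\gamma\rangle$'' to the freeness of the action of the full quotient $\Gn/\langle\gamma\rangle$.
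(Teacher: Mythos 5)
Your argument is correct and is essentially the paper's own proof: both compute the (point-independent) stabilizer as the kernel of $\pi^*|_{\Gn}$ via cancellation in the Picard group, and both identify that kernel with $\langle\gamma\rangle$ using Proposition~\ref{prop:kernel-image of pi*}. Your write-up merely spells out the bookkeeping in slightly more detail.
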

\proof
The elements $\delta\in\Gn$ which fix some element in $\Nm_\pi^{-1}(\Lambda)$ are the ones  such that $\pi^*L_\delta\cong\cO_{X_\gamma}$, and thus they fix every point in $\Nm_\pi^{-1}(\Lambda)$. By Proposition \ref{prop:kernel-image of pi*}, $\delta$ is such an element if and only if $L_\delta$ is a power of $L_\gamma$, i.e., $\delta\in\langle\gamma\rangle\cong\Z_n$. So the $\Gn$-action factors through a free $\Gn/\Z_n\cong\Z_n^{2g-1}$-action on $\Nm_\pi^{-1}(\Lambda)$.
\endproof

We must especially study the action of $\Gn$ on the set of connected components of $\Nm_\pi^{-1}(\Lambda)$. Since $\Gn\cong H^1(X,\Z_n)$, we have a pairing on $\Gn$ given by cup product followed by evaluation on the fundamental class:
\begin{equation}\label{eq:Weilpairing}
\langle\, ,\rangle:\Gn\times\Gn\to\Z_n,
\end{equation} where $\Z_n$ is given the multiplicative structure.
This is a symplectic pairing, called the \emph{Weil pairing}. 

It will be convenient to give a different (but equivalent) definition of the Weil pairing. First, given a meromorphic function $f$ on $X$ and a divisor $D$ on $X$ whose support is disjoint from the support of the divisor of $f$, define 
\begin{equation}\label{eq:f(D)}
f(D)=\prod_{p\in X}f(p)^{D(p)}.
\end{equation}
Weil reciprocity (see for instance \cite[p. 283]{arbarello-cornalba-griffiths-harris:1985}) states that $f(\divisor(g))=g(\divisor(f))$ for any pair of meromorphic functions $f,g$ on $X$.
Using this, the Weil pairing \eqref{eq:Weilpairing} can also be defined as follows. Take two $n$-torsion line bundles $L_1,L_2$ on $X$. Let $D_1$ and $D_2$ be divisors, with disjoint support, so that $L_i\cong\cO_X(D_i)$. Then $nD_i=\divisor(f_i)$ for some meromorphic function $f_i:X\to\C$, and
\begin{equation}\label{eq:Weilpairing2}
\langle L_1,L_2\rangle=\frac{f_1(D_2)}{f_2(D_1)}\in\Z_n.
\end{equation}

\begin{remark}
Sometimes (cf. \cite{arbarello-cornalba-griffiths-harris:1985}), $\langle L_1,L_2\rangle$ is defined as $\frac{n}{2\pi i}\log\frac{f_1(D_2)}{f_2(D_1)}$, but this is by considering $\Z_n$ with the additive structure.
\end{remark}

Recall that $\xi=\exp(2\pi i/n)\in\Z_n$ denotes the standard generator of the multiplicative group $\Z_n$.
Recall also that if $L_\delta$ is a $n$-torsion line bundle on $X$, then $\pi^*L_\delta$ lies in the kernel of the norm map, so by Proposition \ref{prop:psurjective} it is of the form $F_\delta^{-1}\otimes\xi^*F_\delta$ for some line bundle $F_\delta$ on $X_\gamma$ of degree between $0$ and $n-1$. The next result generalizes Mumford \cite[Lemma 2]{mumford:1971} to $n\geq 2$.

\begin{proposition}\label{prop:Weilpairing-degree}
Let $L_\gamma$ be the line bundle corresponding to $\gamma\in\Gn\setminus\{e\}$ and let $L_\delta$ be any $n$-torsion line bundle on $X$ which is not a power of $L_\gamma$. Let $F_\delta$ be a line bundle on $X_\gamma$, of degree between $0$ and $n-1$, such that $\pi^*L_\delta\cong F_\delta^{-1}\otimes\xi^*F_\delta$. Then there exists a non-zero integer $l(\gamma)$ between $1$ and $n-1$, depending only on $\gamma$, such that
\[\langle L_\delta,L_\gamma\rangle=\xi^{l(\gamma)\deg(F_\delta)}.\]
\end{proposition}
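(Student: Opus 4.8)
The plan is to recognise both sides of the asserted formula as homomorphisms of $\Gn$ into $\Z_n$ and to reduce the statement to an equality of their kernels, the genuine content being a Weil-reciprocity computation on $X_\gamma$. First I would record that $\chi\colon\delta\mapsto\deg(F_\delta)\bmod n$ is a well-defined homomorphism $\Gn\to\Z_n$ depending only on $\gamma$. It is well-defined modulo $n$ because if $F$ and $F'$ both satisfy $\pi^*L_\delta\cong F^{-1}\otimes\xi^*F\cong F'^{-1}\otimes\xi^*F'$, then $F'\otimes F^{-1}$ is fixed by $\xi$, hence lies in $\pi^*\Pic(X)$ by Proposition~\ref{prop:kernel-image of pi*}, so its degree is divisible by $n$; it is additive because $\pi^*L_{\delta_1+\delta_2}\cong\pi^*L_{\delta_1}\otimes\pi^*L_{\delta_2}$ lets one take $F_{\delta_1+\delta_2}=F_{\delta_1}\otimes F_{\delta_2}$. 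By Corollary~\ref{cor:connectedcompKerNm} and Proposition~\ref{prop:connectedcompKerNm-deg}, $\chi(\delta)$ is precisely the label of the connected component of $\ker(\Nm_\pi)$ containing $\pi^*L_\delta$; in particular $\chi(\delta)=0$ exactly when $\pi^*L_\delta\in\Prym_\pi(X_\gamma)$. On the other side, bilinearity of the Weil pairing makes $\psi\colon\delta\mapsto\langle L_\delta,L_\gamma\rangle$ a homomorphism $\Gn\to\Z_n$; its nondegeneracy together with $\gamma\neq e$ makes it surjective, and being alternating it kills $\langle\gamma\rangle$, as does $\chi$ (where $F_\delta$ may be taken trivial).

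The heart of the argument is to show that $\psi$ depends on $\delta$ only through $\chi(\delta)$, i.e.\ that $\ker\chi\subseteq\ker\psi$. For this I would carry out a reciprocity computation downstairs and upstairs. Write $L_\gamma\cong\cO_X(D_\gamma)$ with $nD_\gamma=\divisor(h_\gamma)$ and $L_\delta\cong\cO_X(D_\delta)$ with $nD_\delta=\divisor(h_\delta)$, chosen with disjoint supports, so that $\langle L_\delta,L_\gamma\rangle=h_\delta(D_\gamma)/h_\gamma(D_\delta)$ by \eqref{eq:Weilpairing2}. On $X_\gamma$ the divisor $\pi^*D_\gamma$ is principal (as $\pi^*L_\gamma$ is trivial), say $\divisor(u)=\pi^*D_\gamma$, and one normalises $u$ so that $u^n=\pi^*h_\gamma$ and $\Nm_\pi(u)=h_\gamma$. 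Since the deck transformation scales the tautological section $\lambda$ by $\xi$, the function $u=\pi^*s_\gamma/\lambda$ satisfies $\xi^*u=\xi^{-1}u$. Writing $F_\delta\cong\cO_{X_\gamma}(E)$ with $\deg E=\deg F_\delta$, the two expressions for $\pi^*L_\delta$ give an auxiliary function $v$ with $\divisor(v)=\pi^*D_\delta-\xi^*E+E$. The identity $\Nm_\pi(u)=h_\gamma$ together with \eqref{eq:f(D)} yields $h_\gamma(D_\delta)=u(\pi^*D_\delta)$, while $\xi^*u=\xi^{-1}u$ yields $u(\xi^*E)=\xi^{\deg F_\delta}u(E)$. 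Feeding $\pi^*D_\delta=\divisor(v)+\xi^*E-E$ into $u(\pi^*D_\delta)$ and using Weil reciprocity to trade the unknown function $v$ against $u$, the factor $\xi^{\deg F_\delta}$ emerges and the remaining contributions cancel up to a root of unity that depends on $\gamma$ alone; in particular $\langle L_\delta,L_\gamma\rangle=1$ whenever $\deg F_\delta\equiv0\pmod n$.

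Finally I would extract $l(\gamma)$ by linear algebra. As $n$ is prime, $\Gn\cong\Z_n^{2g}$ is a vector space over the field $\Z_n$, and $\chi$ and $\log_\xi\psi$ are linear functionals on it. The computation gives $\ker\chi\subseteq\ker\psi$; since $\psi\neq0$, its kernel $\gamma^{\perp}$ is a hyperplane, which forces $\ker\chi=\ker\psi$ (and in particular $\chi\neq0$). Two nonzero linear functionals with the same kernel are proportional, so there is a unique $l(\gamma)\in\{1,\dots,n-1\}$ with $\psi=\xi^{\,l(\gamma)\chi}$, that is $\langle L_\delta,L_\gamma\rangle=\xi^{l(\gamma)\deg(F_\delta)}$. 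This $l(\gamma)$ depends only on $\gamma$ because both $\chi$ and $\psi$ do, and it is nonzero because $\psi$ is not identically trivial.

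The step I expect to be the main obstacle is the reciprocity computation of the previous paragraph, and specifically the bookkeeping of multiplicative constants and roots of unity: one must check that, after applying Weil reciprocity on $X_\gamma$ and eliminating $v$, the only surviving $\delta$-dependence is the clean factor $\xi^{\deg F_\delta}$. Everything else in the argument is formal once the inclusion $\ker\chi\subseteq\ker\psi$ is in hand.
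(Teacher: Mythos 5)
Your proposal is correct, and its computational heart coincides with the paper's proof: both hinge on choosing $g$ (your $u$) with $\divisor(g)=\pi^*D_\gamma$, observing that $\xi^*g=\lambda(\gamma)g$ for a nontrivial $n$-th root of unity $\lambda(\gamma)=\xi^{l(\gamma)}$ depending only on $\gamma$, and then running Weil reciprocity on $X_\gamma$ to trade the auxiliary function against $g$ so that only the factor $\lambda(\gamma)^{\deg F_\delta}$ survives. Where you differ is in the packaging: the paper carries the computation through to the end and obtains $\langle L_\delta,L_\gamma\rangle=\xi^{l(\gamma)\deg(F_\delta)}$ in one stroke, whereas you extract from it only the kernel inclusion $\ker\chi\subseteq\ker\psi$ and then recover the exponent $l(\gamma)$ by the proportionality of nonzero linear functionals on $\Z_n^{2g}$ with equal kernels (using that $n$ is prime). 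That wrapper is logically sound --- and it is a nice observation that $\chi$ is a well-defined character recording the component of $\ker(\Nm_\pi)$ containing $\pi^*L_\delta$, via Propositions~\ref{prop:kernel-image of pi*} and \ref{prop:connectedcompKerNm-deg} --- but it is redundant, and there is a small internal tension you should resolve: you say the reciprocity computation leaves ``a root of unity depending on $\gamma$ alone'' and then conclude that the pairing is trivial when $\deg F_\delta\equiv 0\pmod n$, which only follows if that constant is $1$. In fact, if you carry the bookkeeping through (as the paper does: $h(\divisor(g))=g(\divisor(h))$ kills the auxiliary function exactly, and $g(\xi E)/g(E)=\lambda(\gamma)^{\deg E}$), no residual constant survives and the computation already yields the full formula, at which point your linear-algebra step becomes unnecessary.
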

\proof
It will be convenient to use definition \eqref{eq:Weilpairing2}. Let $D_\gamma$ and $D_\delta$ be divisors on $X$, with disjoint support, such that $L_\gamma\cong\cO_X(D_\gamma)$ and $L_\delta\cong\cO_X(D_\delta)$ and such that $nD_\gamma=\divisor(f_\gamma)$ and $nD_\delta=\divisor(f_\delta)$ for some non-zero meromorphic functions $f_\gamma,f_\delta$.
Since $\pi^*L_\gamma$ is trivial, then $\pi^*D_\gamma=\divisor(g)$ for some meromorphic function $g\in\cM(X_\gamma)^*$. From this it follows that $f_\gamma=\Nm_\pi(g)$.
On the other hand, if $F_\delta\cong\cO_{X_\gamma}(D)$ for some divisor $D$ on $X_\gamma$, then there is a non-zero meromorphic function $h$ on $X_\gamma$ such that $\pi^*D_\delta=-D+\xi D+\divisor(h)$. It also follows that $f_\delta=\Nm_\pi(h)$.

Since $\divisor(\xi^*g)=\xi^*\pi^*D_\gamma=\pi^*D_\gamma=\divisor(g)$, there is a non-zero complex number $\lambda(\gamma)$, depending only on $g$, i.e. only on $\gamma$, such that 
\begin{equation}\label{eq:equivariance of g}
\xi^*g=\lambda(\gamma)g.
\end{equation} Note that $\lambda(\gamma)\neq 1$, since otherwise that would be saying that $L_\gamma$ was trivial. Furthermore,
$g^n=\pi^*f_\gamma=\xi^*\pi^*f_\gamma=(\xi^*g)^n$, hence $\lambda(\gamma)^n=1$, thus $\lambda(\gamma)=\xi^{l(\gamma)}$ for some integer $l(\gamma)\in\{0,\ldots,n-1\}$ depending only on $\gamma$.

Recall that $\Nm_\pi(g)(p)=\prod_{\tilde p\in\pi^{-1}(p)}g(\tilde p)$. Then it is easy to see directly from the definition \eqref{eq:f(D)}, that $\Nm_\pi(g)(D_\delta)=g(\pi^*D_\delta)$. Similarly for $\Nm_\pi(h)(D_\gamma)$.
Then, using \eqref{eq:equivariance of g}, we get
\begin{equation*}
\begin{split}
\langle L_\delta,L_\gamma\rangle&=\frac{\Nm_\pi(h)(D_\gamma)}{\Nm_\pi(g)(D_\delta)}=\frac{h(\pi^*D_\gamma)}{g(\pi^*D_\delta)}\\
&=\frac{h(\divisor(g))}{g(\divisor(h)-D+\xi D)}=\frac{g(D)}{g(\xi D)}\\
&=\frac{\prod_{p\in X_\gamma}g(p)^{D(p)}}{\prod_{p\in X_\gamma}g(p)^{(\xi D)(p)}}=\frac{\prod_{p\in X_\gamma}g(p)^{D(p)}}{\prod_{p\in X_\gamma}(\lambda(\gamma)^{-1}g(\xi\cdot p))^{D(\xi\cdot p)}}\\
&=\lambda(\gamma)^{\sum_{p\in X_\gamma}D(\xi\cdot p)}=\lambda(\gamma)^{\deg(F_\delta)}\\
&=\xi^{l(\gamma)\deg(F_\delta)}
\end{split}
\end{equation*}
as claimed.
\endproof

If $n=2$ we have $l(\gamma)=1$ for every non-trivial $\gamma$, recovering Mumford's result.

The Weil pairing corresponds to intersection form in homology $H_1(X,\Z_n)$.
From this definition it is clear that, for each $\gamma\in\Gn\setminus\{e\}$, it is possible to choose a basis 
\begin{equation}\label{eq:basis}
(\gamma,\delta_0,\delta_1,\ldots,\delta_{2g-2})
\end{equation} of $\Gn$ including $\gamma$, and such that 
\begin{equation}\label{eq:Wpairinchosenbasis}
\langle L_{\delta_i},L_\gamma\rangle=\begin{cases}
\xi^{l(\gamma)}&\, i=0\\
1&\, i\neq 0.
\end{cases}
\end{equation}
We assume from now on that such a basis has been chosen.

Now we describe the $\Gn$-action on the $n$ connected components of $\Nm_\pi^{-1}(\Lambda)$.

\begin{proposition}\label{prop:Gamma-action on components of Nm^-1}
Every element of $\Gn$ which is not in the subgroup $\langle\delta_0\rangle$ generated by $\delta_0$ acts trivially on $\pi_0(\Nm_\pi^{-1}(\Lambda))$, and $\langle\delta_0\rangle\cong\Z_n$ acts freely and transitively on $\pi_0(\Nm_\pi^{-1}(\Lambda))$. In particular, $\Gn$ acts transitively on $\pi_0(\Nm_\pi^{-1}(\Lambda))$.
\end{proposition}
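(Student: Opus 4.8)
The plan is to transport the $\Gn$-action \eqref{eq:actGnkerNm} to the finite index set $\pi_0(\Nm_\pi^{-1}(\Lambda))$, which Proposition \ref{prop:connectedcompKerNm-deg=d} identifies with $\Z_n$ via the degree label, and then to read off the induced permutation from the Weil pairing with $L_\gamma$. First I would fix $M\in\Nm_\pi^{-1}(\Lambda)$ and write $M=p_{L_0}(L)=L^{-1}\otimes\xi^*L\otimes L_0$ as in \eqref{eq:pL0}, with $\deg(L)=i$, so that the component of $M$ is recorded by $i\bmod n$. Writing $\pi^*L_\delta\cong F_\delta^{-1}\otimes\xi^*F_\delta$ with $0\le\deg(F_\delta)\le n-1$ (Proposition \ref{prop:psurjective}), a one-line computation gives
\[
M\otimes\pi^*L_\delta=(L\otimes F_\delta)^{-1}\otimes\xi^*(L\otimes F_\delta)\otimes L_0=p_{L_0}(L\otimes F_\delta),
\]
so $\delta$ sends the component labelled $i$ to the one labelled $i+\deg(F_\delta)\pmod n$. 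As $p_{L_0}$ is insensitive to tensoring $L$ by a pullback (such a factor is $\xi$-fixed, hence killed by $L\mapsto L^{-1}\otimes\xi^*L$), this shift is well defined modulo $n$; thus the action of each $\delta$ on $\pi_0(\Nm_\pi^{-1}(\Lambda))\cong\Z_n$ is translation by $\deg(F_\delta)\bmod n$.

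Next I would identify this shift with the $\delta_0$-coordinate of $\delta$. Proposition \ref{prop:Weilpairing-degree} gives $\langle L_\delta,L_\gamma\rangle=\xi^{l(\gamma)\deg(F_\delta)}$ whenever $L_\delta$ is not a power of $L_\gamma$; since $n$ is prime, $l(\gamma)$ is invertible modulo $n$, so $\deg(F_\delta)\bmod n$ is determined by $\langle L_\delta,L_\gamma\rangle$ and vanishes exactly when this pairing is trivial. Expanding $\delta=a\gamma+\sum_{i=0}^{2g-2}b_i\delta_i$ in the basis \eqref{eq:basis} and using bilinearity together with the normalisation \eqref{eq:Wpairinchosenbasis}, I obtain $\langle L_\delta,L_\gamma\rangle=\xi^{l(\gamma)b_0}$, whence $\deg(F_\delta)\equiv b_0\pmod n$.

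Combining the two steps, $\delta$ acts on $\pi_0(\Nm_\pi^{-1}(\Lambda))$ precisely as translation by its $\delta_0$-coordinate $b_0\in\Z_n$. In particular every $\delta$ whose $\delta_0$-coordinate vanishes acts trivially, while $\delta_0$ itself acts as the generating translation, so $\langle\delta_0\rangle\cong\Z_n$ acts freely and transitively on the $n$ components; transitivity of the full group $\Gn$ is then immediate.

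The computations are all short, so I do not expect a serious obstacle. The one point requiring care is the geometry–topology interface in the second step: the component shift is the \emph{geometric} quantity $\deg(F_\delta)$, and what makes it computable is that Proposition \ref{prop:Weilpairing-degree} pins it down modulo $n$ through the \emph{topological} Weil pairing, the primality of $n$ being exactly what upgrades the assignment $\deg(F_\delta)\mapsto\langle L_\delta,L_\gamma\rangle$ to a bijection between $\Z_n$ and the group of $n$-th roots of unity. I would also have to dispatch the degenerate case $\delta\in\langle\gamma\rangle$ separately, where Proposition \ref{prop:Weilpairing-degree} does not apply; there $\pi^*L_\delta\cong\cO_{X_\gamma}$ forces $\deg(F_\delta)=0$, consistent with $\langle L_\gamma,L_\gamma\rangle=1$ by the alternating property of the pairing, so such $\delta$ act trivially as required.
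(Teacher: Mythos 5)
Your proposal is correct and follows essentially the same route as the paper: identify the components of $\Nm_\pi^{-1}(\Lambda)$ with $\Z_n$ via the degree label from Proposition \ref{prop:connectedcompKerNm-deg=d}, compute $M\otimes\pi^*L_\delta=p_{L_0}(L\otimes F_\delta)$, and read off the shift $\deg(F_\delta)$ from the Weil pairing using Proposition \ref{prop:Weilpairing-degree} and the normalisation \eqref{eq:Wpairinchosenbasis}. Your explicit formulation that $\delta$ acts by translation by its $\delta_0$-coordinate $b_0$ is in fact a sharper (and more accurate) reading than the paper's literal phrase ``not in $\langle\delta_0\rangle$'' --- the elements acting trivially are exactly those with $b_0=0$ --- and your separate treatment of $\delta\in\langle\gamma\rangle$ is a sensible precaution the paper leaves implicit.
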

\proof
We will use Proposition \ref{prop:connectedcompKerNm-deg=d}. Let $M\in\Nm_\pi^{-1}(\Lambda)$, so that $M=p_{L_0}(L)$ for some degree $i\in\{d,\ldots,d+n-1\}$ line bundle $L$ over $X_\gamma$, i.e.
\[M=L^{-1}\otimes\xi^*L\otimes L_0.\]
Since $\pi^*L_\delta\in\ker(\Nm_\pi)$ then, by Proposition \ref{prop:psurjective}, $\pi^*L_\delta=p(F_\delta)=F_\delta^{-1}\otimes\xi^*F_\delta$ for some line bundle $F_\delta$ on $X_\gamma$ of degree between $0$ and $n-1$. So
\[M\otimes\pi^*L_\delta=(L\otimes F_\delta)^{-1}\otimes\xi^*(L\otimes F_\delta)\otimes L_0.\]
If $\delta\not\in\langle\delta_0\rangle$, then by \eqref{eq:Wpairinchosenbasis}, $\langle L_\delta,L_\gamma\rangle=1$ hence, by Proposition \ref{prop:Weilpairing-degree},  $\deg(F_\delta)=0$, so that $\deg(L\otimes F_\delta)=i$ and therefore $M\otimes\pi^*L_\delta$ is in the same connected component as $M$ by Proposition \ref{prop:connectedcompKerNm-deg=d}. For $\delta_0$, for the same reasons we have $\deg(F_\delta)=1$, so $\deg(L\otimes F_\delta)=i+1$, thus the powers of $\delta_0$ act transitively on $\pi_0(\Nm_\pi^{-1}(\Lambda))$.
\endproof


\section{The polynomial $E_{\mathrm{st}}(\cM/\Gn)-E(\cM)^{\Gn}$}\label{sec:stringy E-pol}


\subsection{The subvarieties $\cM^\gamma$}

The stringy $E$-polynomial of $\cM/\Gn$ is defined by \eqref{eq:stringyEpolynomial}, but the statement of Theorem \ref{thm:main2} is only about
\[
\sum_{\gamma\neq e}E(\cM^\gamma)^{\Gn}(uv)^{F(\gamma)}.
\]
This is the polynomial we aim to compute in the present section.  This requires the study of
the subvarieties $\cM^\gamma$ of fixed points under the action of each nontrivial
$\gamma\in\Gn$. This study is an adaptation
to parabolic Higgs bundles of the corresponding result for vector
bundles studied by Narasimhan and Ramanan in
\cite{narasimhan-ramanan:1975} (cf.\ Hausel and Thaddeus
\cite[Sec.~7]{hausel-thaddeus:2003} for the case of Higgs bundles in
the non-parabolic situation).

Recall that to each non-trivial $\gamma\in\Gn$ we associate an unramified cyclic $n$-cover $\pi:X_\gamma\to X$ as in \eqref{eq:unram n-cover}, with Galois group isomorphic to $\Z_n$ and $\Nm_\pi:\Pic^d(X_\gamma)\to\Pic^d(X)$ is the corresponding norm map. 

Let $D_\gamma=\pi^{-1}(D)$ be the inverse image in $X_\gamma$ of our fixed divisor $D=p_1+\cdots+p_{|D|}$ in $X$, from which we have our fixed generic parabolic type $\bm\a$,
\[0\leq \a_1(p)<\cdots<\a_n(p)<1,\]
for each $p\in D$. It is important to note that our genericity assumption on the parabolic type $\bm\a$ still holds, but that we do not need them to be small as it was required to prove in Proposition \ref{prop:E-pol-variant-part} the equality corresponding to the left-hand side of Theorem \ref{thm:main2}. 

Given
\begin{equation}\label{eq:permutation-weights fibers}
\varpi_n=(\varpi_n(p_1),\ldots,\varpi_n(p_{|D|}))\in S_n^{|D|},
\end{equation} 
we naturally construct a parabolic type of rank $1$, denoted by $\bm\a_\gamma(\varpi_n)$ on $D_\gamma$ as follows. 
For each $p\in D$, write \[\varpi_n(p)=a_1(p)\ a_2(p) \dots a_n(p)\in S_n.\] 
Then attach the weights $0\leq \a_1(p)< \cdots <\a_n(p) < 1$ to the set $\pi^{-1}(p)=\{q_1,\ldots,q_n\}\subset D_\gamma$
so that the point $q_i\in \pi^{-1}(p)$ is given the weight $\a_{a_i(p)}(p)$. This yields the parabolic type $\bm\a_\gamma(\varpi_n)$,
\[0\leq \a_{a_i(p)}(p)<1,\]
at each $q_i\in \pi^{-1}(p)\subset D_\gamma$. Strictly speaking, this depends on a choice of an ordering of the points in $\pi^{-1}(p)$ for each $p$. This ordering was implicitly chosen when we wrote $\pi^{-1}(p)=\{q_1,\ldots, q_n\}$. So, without the choice of that ordering, $\varpi_n$ in \eqref{eq:permutation-weights fibers} belongs to a torsor for the group $S_n^{|D|}$. In any case, any ordering is valid for our purposes.

For each $\varpi_n\in S_n^{|D|}$, consider the moduli space $\cM_d^{\bm\a_\gamma(\varpi_n)}(\C^*)$ of strongly parabolic Higgs line bundles (i.e., $\C^*$-Higgs bundles) of degree $d\in\Z$, over $X_\gamma$, of parabolic type $\bm\a_\gamma(\varpi_n)$ over $D_\gamma$.
Let $K_\gamma=\pi^*K$ be the canonical line bundle of $X_\gamma$.

\begin{lemma}\mbox{}
\begin{enumerate}
\item
For every $\varpi_n\in S_n^{|D|}$, the moduli space $\cM_d^{\bm\a_\gamma(\varpi_n)}(\C^*)$ is isomorphic to the moduli space of Higgs line bundles of degree $d$ over $X_\gamma$, i.e., to the cotangent bundle $T^*\Pic^d(X_\gamma)\cong\Pic^d(X_\gamma)\times H^0(X_\gamma,K_\gamma)$.
\item The disjoint union $\bigsqcup_{\varpi_n\in S_n^{|D|}}\cM_d^{\bm\a_\gamma(\varpi_n)}(\C^*)$ is isomorphic to $T^*\Pic^d(X_\gamma)\times S_n^{|D|}$.
\end{enumerate}
\end{lemma}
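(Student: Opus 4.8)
The plan is to reduce the whole statement to one observation: for a line bundle the strong parabolicity of the Higgs field is so restrictive that the space of Higgs fields, and hence the entire moduli space, does not see the weights $\bm\a_\gamma(\varpi_n)$ at all.

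First I would unwind the rank-$1$ definitions. A parabolic structure of rank $1$ at a point $q\in D_\gamma$ is just a single weight attached to the trivial flag $L_q\supsetneq\{0\}$, so $\bm\a_\gamma(\varpi_n)$ carries no flag information. For the Higgs field, recall that $\vp\colon L\to L\otimes K_\gamma(D_\gamma)$ is a meromorphic endomorphism-valued $1$-form with at most simple poles along $D_\gamma$ whose residue at each $q$ is nilpotent. But the residue at $q$ is a nilpotent endomorphism of the one-dimensional fibre $L_q$, hence vanishes; equivalently, taking $i=j=1$ in the strong parabolicity condition (where $\beta(q)\ge\beta(q)$ holds trivially) forces $\vp(L_q)\subseteq L_{q,2}=\{0\}$. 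Thus $\vp$ has no poles at all, and
\[
\SPE(L)\cong\cO_{X_\gamma}(-D_\gamma),\qquad
H^0\big(X_\gamma,\SPE(L)\otimes K_\gamma(D_\gamma)\big)\cong H^0(X_\gamma,K_\gamma).
\]
The decisive point is that this identification is completely independent of $\bm\a_\gamma(\varpi_n)$.

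Next I would note that stability is vacuous in rank $1$: a line bundle has no proper non-zero saturated subbundles, so every rank-$1$ parabolic Higgs bundle is automatically stable, again independently of the weights. Combining this with the previous step, a point of $\cM_d^{\bm\a_\gamma(\varpi_n)}(\C^*)$ is precisely a pair $(L,\vp)$ with $L\in\Pic^d(X_\gamma)$ and $\vp\in H^0(X_\gamma,K_\gamma)$, subject to no further constraint. Hence
\[
\cM_d^{\bm\a_\gamma(\varpi_n)}(\C^*)\cong\Pic^d(X_\gamma)\times H^0(X_\gamma,K_\gamma)\cong T^*\Pic^d(X_\gamma),
\]
where the last isomorphism is the canonical trivialization of the cotangent bundle of $\Pic^d(X_\gamma)$ (a torsor over the abelian variety $\Jac(X_\gamma)$), via Serre duality $H^1(X_\gamma,\cO_{X_\gamma})^*\cong H^0(X_\gamma,K_\gamma)$. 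This proves (1), and shows moreover that the isomorphism is canonical and identical for every $\varpi_n$.

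Part (2) is then purely formal: since $S_n^{|D|}$ is a finite discrete set and each summand is canonically identified with the one fixed variety $T^*\Pic^d(X_\gamma)$, the disjoint union is $T^*\Pic^d(X_\gamma)\times S_n^{|D|}$. I do not expect any genuine obstacle here; the only step requiring a little care is the computation $\SPE(L)\cong\cO_{X_\gamma}(-D_\gamma)$, i.e.\ verifying that for a line bundle strong parabolicity forces the Higgs field to have vanishing residue at every marked point regardless of the weights. Everything else is a direct translation of the definitions.
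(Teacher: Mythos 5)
Your proof is correct and follows essentially the same route as the paper: the key point in both is that for a line bundle, strong parabolicity forces the residue of the Higgs field to vanish at every marked point, so $\SPE(F)\otimes K_\gamma(D_\gamma)\cong K_\gamma$ and the weights play no role, after which forgetting the parabolic structure gives the isomorphism in (1) and (2) follows formally. Your additional remarks (vacuity of stability in rank one, the canonical trivialization of $T^*\Pic^d(X_\gamma)$) are correct and merely make explicit what the paper leaves implicit.
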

\proof
Take a strongly parabolic Higgs line bundle $(F,\phi)\in\cM_d^{\bm\a_\gamma(\varpi_n)}(\C^*)$. Then $F\in\Pic^d(X_\gamma)$ and $\phi\in H^0(X_\gamma,K_\gamma(D_\gamma))$. But the fact that $\phi$ is strongly parabolic and $F$ is a line bundle implies that actually $\phi\in H^0(X,K_\gamma)$, thus the map that forgets the parabolic structure yields the isomorphism stated in (1). Then (2) follows from (1).
\endproof

\begin{proposition}
  \label{prop:pushforward-semistable}
Let $\bm\alpha=(\a_1(p),\ldots,\a_n(p))_{p\in D}$ be any generic parabolic type and let $\varpi_n\in S_n^{|D|}$. If $(F,\phi)\in\cM_d^{\bm\a_\gamma(\varpi_n)}(\C^*)$, then $(\pi_*F,\pi_*\phi)$ is a semistable strongly parabolic $\GL(n,\C)$-Higgs bundle of degree $d$ and parabolic type $\bm\a$.
\end{proposition}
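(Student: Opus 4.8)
The plan is to first pin down the numerical and parabolic data of $(\pi_*F,\pi_*\phi)$, and then reduce semistability to a base-change computation on the cover $X_\gamma$.

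First I would verify the easy assertions. Since $\pi$ is unramified of degree $n$, finiteness of $\pi$ gives $\chi(\pi_*F)=\chi(F)$; comparing this with Riemann--Roch on $X_\gamma$, whose genus is $n(g-1)+1$, yields $\deg(\pi_*F)=d$, and of course $\rk(\pi_*F)=n$. The fibre of $\pi_*F$ over $p\in D$ is canonically $\bigoplus_{q\in\pi^{-1}(p)}F_q$, and by construction of $\bm\a_\gamma(\varpi_n)$ the summand $F_{q_i}$ carries the weight $\a_{a_i(p)}(p)$; as $i$ varies these run over all of $\a_1(p)<\dots<\a_n(p)$ because $\varpi_n(p)$ is a permutation. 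Hence the induced weighted flag on $(\pi_*F)_p$ is full of type exactly $\bm\a$. For the Higgs field, recall from the preceding lemma that $\phi$ is genuinely a section of $K_\gamma=\pi^*K$, with no poles; by the projection formula $\pi_*\phi$ is then a section of $\End(\pi_*F)\otimes K\subseteq\End(\pi_*F)\otimes K(D)$. Viewed inside $K(D)$ it has vanishing residue at every $p\in D$, so the strongly parabolic condition (that the residue map each $V_{p,i}$ into $V_{p,i+1}$) holds trivially. Thus $(\pi_*F,\pi_*\phi)$ is a strongly parabolic $\GL(n,\C)$-Higgs bundle of degree $d$ and type $\bm\a$.

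The heart of the argument is semistability, which I would obtain by pulling back to $X_\gamma$. Since $\pi$ is Galois with group $\Z_n=\langle\xi\rangle$, base change along the cover gives the decomposition
\[\pi^*\pi_*F\cong\bigoplus_{i=0}^{n-1}(\xi^i)^*F.\]
I claim this is an isomorphism of \emph{parabolic} bundles. Indeed, over a point $q\in\pi^{-1}(p)$ both sides have fibre $\bigoplus_{q'\in\pi^{-1}(p)}F_{q'}$, and on each side the summand corresponding to $F_{q'}$ carries the parabolic weight of $F$ at $q'$, namely $\a_{a_i(p)}(p)$ when $q'=q_i$; so the two weighted flags agree. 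Consequently each summand $(\xi^i)^*F$ is a parabolic line bundle of parabolic degree $\deg(F)+\sum_p\sum_{q\in\pi^{-1}(p)}(\text{weight at }q)=d+\sum_p\sum_j\a_j(p)=\pdeg(F)$, the same for every $i$ since $\xi^i$ merely permutes each fibre $\pi^{-1}(p)$. A direct sum of parabolic line bundles of equal parabolic degree is polystable, so $\pi^*\pi_*F$ is a semistable parabolic bundle of parabolic slope $\pdeg(F)$.

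Finally I would transfer semistability back down. Because $\pi$ is étale of degree $n$, pullback multiplies parabolic degree by $n$ and preserves rank, so for any parabolic subbundle $W\subseteq\pi_*F$ one has $\pmu(\pi^*W)=n\,\pmu(W)$; in particular $\pmu(\pi^*\pi_*F)=n\,\pmu(\pi_*F)$, consistent with $\pdeg(\pi_*F)=\pdeg(F)$ from the first step. If some $W$ destabilized $\pi_*F$, then $\pi^*W\subseteq\pi^*\pi_*F$ would satisfy $\pmu(\pi^*W)=n\,\pmu(W)>n\,\pmu(\pi_*F)=\pmu(\pi^*\pi_*F)$, contradicting the semistability just established. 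Hence $\pi_*F$ is a semistable parabolic bundle, and \emph{a fortiori} $(\pi_*F,\pi_*\phi)$ is a semistable parabolic Higgs bundle, the Higgs field only shrinking the class of test subbundles. I expect the one point needing genuine care to be the identification of $\pi^*\pi_*F$ with $\bigoplus_i(\xi^i)^*F$ \emph{as parabolic bundles}, i.e. checking that parabolic pullback, the Galois base-change decomposition, and the induced-weight convention for subbundles all interact compatibly over the unramified cover; the remaining steps are degree bookkeeping.
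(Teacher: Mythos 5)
Your proposal is correct and follows essentially the same route as the paper's proof: degree and flag bookkeeping for $\pi_*F$, strong parabolicity from the vanishing of the residue of $\phi\in H^0(X_\gamma,K_\gamma)$, semistability of $\pi^*\pi_*F\cong\bigoplus_{i}(\xi^{i})^*F$ as a direct sum of parabolic line bundles of equal parabolic slope, and descent of semistability via the relation $\pmu(\pi^*W)=n\,\pmu(W)$. The only (harmless) differences are cosmetic: you compute the degree via Euler characteristics and phrase the descent step contrapositively for arbitrary subbundles, whereas the paper tests only $\vp$-invariant ones.
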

\proof
Let $V=\pi_*F$ and $\vp=\pi_*\phi:V\to V\otimes K$. Since $\pi$ is unramified and $F$ has degree $d$, then so has the rank $n$ vector bundle $V$. Note that $K$ is a subsheaf of $K(D)$ so $\vp$ is also a section of $\End(V)\otimes K(D)$.
Let us see that $V$ has a parabolic structure at $D$ of type $\bm\a$. Let $p\in D$ and $\{q_1,\ldots,q_n\}=\pi^{-1}(p)\subset D_\gamma$.
The filtration of $V_p$ is given as
\begin{equation}\label{eq:filt-perm1}
V_p=V_{p,1}=\bigoplus_{i=1}^n F_{q_i}\supsetneq V_{p,2} \supsetneq \cdots \supsetneq V_{p,n}\supsetneq \{0\},\qquad 0\leq \a_1(p)< \cdots <\a_n(p) < 1,
\end{equation}
where, for $2\leq j\leq n$, 
\begin{equation}\label{eq:filt-perm2}
V_{p,j}=V_{p,j-1}/F_{q'}
\end{equation} with $q'$ being the point $q_i$ attached with the weight $\a_{j-1}(p)$ i.e. $q'=q_i$ such that $a_i(p)=j-1$. So $V_{p,n}=F_{q_i}$ such that $a_i(p)=n$, $F_{p,n-1}=F_{q_j}\oplus F_{q_i}$ with $j$ such that $a_j=n-1$, and so on.
Doing this for every point of $D$ determines the parabolic structure of $V$ of type $\bm\a$.

Notice that, conversely, the parabolic structure of $V$, given at each point of $D$ by \eqref{eq:filt-perm1} and \eqref{eq:filt-perm2} determines the element \eqref{eq:permutation-weights fibers} of $S_n^{|D|}$.

Over each $p\in D$, $\vp_p=\vp|_{V_p}$ is diagonal with respect to the decomposition $V_p=\bigoplus F_{q_i}$ of $V_p$. Look at $\phi\in H^0(X_\gamma,K_\gamma)$ as a section of $K_\gamma(D_\gamma)$ which vanishes at every $q_i\in D_\gamma$. Then it is clear that $\vp$ is strongly parabolic with respect to \eqref{eq:filt-perm1}. 
So $(V,\vp)$ is a strongly parabolic Higgs bundle of rank $n$, degree
$d$ and parabolic type $\bm\a$.  

It remains to check
semistability. For that, recall that $\xi=\exp(2\pi i/n)$ denotes the
standard generator of the Galois group $\Z_n$, and note that
$(\pi^*V,\pi^*\vp)\cong (F\oplus \xi^*F\oplus\cdots \oplus \xi^{\ast
  n-1}F,\phi\oplus \xi^*\phi\oplus\cdots \oplus \xi^{\ast n-1}\phi)$
is a strongly parabolic $\GL(n,\C)$-Higgs bundle on
$X_\gamma$ with parabolic structure over $D_\gamma$ induced from
\eqref{eq:filt-perm1}. Since
\[\pmu(\pi^*\pi_*F)=\frac{nd+\sum_{q\in
      D_\gamma}\sum_{i=1}^n\alpha_i(\pi(q))}{n}=d+\sum_{p\in
    D}\sum_{i=1}^n\alpha_i(p)=\pmu(\xi^{j\ast}F),\] 
for every $j=0,\ldots,n-1$, then $\pi^*V\cong \bigoplus_{j=1}^n\xi^{j\ast}F$ is a direct sum of  parabolic line bundles, all of the same slope, and therefore it is semistable \cite[Corollaire 10, p. 71]{seshadri:1982}. 
Take a $\vp$-invariant subbundle $V'\subset V$ of degree $d'$ and rank $n'$. Then $\pi^*V'$ is a $\pi^*\vp$-invariant subbundle of $\pi^*V$. By semistability of $\pi^*V$, we must have $\pmu(\pi^*V')\leq\pmu(\pi^*V)$, that is 
\[\frac{nd'+n\sum_{p\in D}\sum_{i=1}^n\alpha_i(p)}{n'}\leq\frac{nd+n\sum_{p\in D}\sum_{i=1}^n\alpha_i(p)}{n}.\] But this is equivalent to 
\[\frac{d'+\sum_{p\in D}\sum_{i=1}^n\alpha_i(p)}{n'}\leq\frac{d+\sum_{p\in D}\sum_{i=1}^n\alpha_i(p)}{n},\] that is, to
$\pmu(V')\leq\pmu(V)$, proving semistability of $(V,\vp)$.
\endproof

So the push-forward gives a map from $\bigsqcup_{\varpi_n\in S_n^{|D|}}\cM_d^{\bm\a_\gamma(\varpi_n)}(\C^*)\cong T^*\Pic^d(X_\gamma)\times S_n^{|D|}$ to the moduli space $\cM_d^{\bm\a}(\GL(n,\C))$ of degree $d$, strongly parabolic $\GL(n,\C)$-Higgs bundles, with parabolic type $\bm\a$. If we want the determinant to be $\Lambda$, we have to restrict this map to the subspace $T^*\Nm_\pi^{-1}(\Lambda)\times S_n^{|D|}$ if $n\geq 3$ is prime (or to $T^*\Nm_\pi^{-1}(\Lambda L_\gamma)\times S_2^{|D|}$ if $n=2$), where  $T^*\Nm_\pi^{-1}(\Lambda)$ denotes the cotangent bundle to $\Nm_\pi^{-1}(\Lambda)$.  
Indeed, $\det(\pi_*F)\cong\Nm_\pi(F) L_\gamma^{-n(n-1)/2}$, so if $n\geq 3$ is prime, 
$\Nm_\pi(F)\cong\Lambda$ (and if $n=2$, $\Nm_\pi(F)\cong\Lambda L_\gamma$). Thus $(\pi_*F,\pi_*\phi)$ is such that $\det(\pi_*F)\cong\Lambda$ and $\tr(\pi_*\phi)=0$ if and only if $(F,\phi)\in T^*\Nm_\pi^{-1}(\Lambda)\times S_n^{|D|}$, if $n\geq 3$ prime, or $(F,\phi)\in T^*\Nm_\pi^{-1}(\Lambda L_\gamma)\times S_2^{|D|}$ if $n=2$. 
In any case, $\Nm_\pi^{-1}(\Lambda)$ is a torsor for $\ker(\Nm_\pi)$ thus $T^*\Nm_\pi^{-1}(\Lambda)$ is also a torsor for $T^*\ker(\Nm_\pi)$. In turn, from Proposition \ref{prop:connectedcompKerNm-deg=d}, each connected component of $T^*\ker(\Nm_\pi)$ is a torsor for $T^*\Prym_\pi(X_\gamma)$ which is isomorphic to $\Prym_\pi(X_\gamma)\times\C^{(n-1)(g-1)}$, since the Prym is an abelian variety of dimension $(n-1)(g-1)$, by \eqref{eq:dimPrym}. Hence 
\[T^*\Nm_\pi^{-1}(\Lambda)\cong\Nm_\pi^{-1}(\Lambda)\times\C^{(n-1)(g-1)}.\]

Therefore, Proposition \ref{prop:pushforward-semistable} gives a map
\begin{equation}\label{eq;pushforward}
\pi_*:\Nm_\pi^{-1}(\Lambda)\times\C^{(n-1)(g-1)}\times S_n^{|D|}\to\cM,
\end{equation}
with the obvious modification when $n=2$.

Now we can describe the locus $\cM^\gamma$ of points in $\cM$ fixed by a non-trivial element $\gamma\in\Gn$. This locus is going to be the image of $\pi_*$, which is isomorphic to the quotient of its domain by a natural action of the Galois group of $\pi:X_\gamma\to X$.

\begin{theorem}\label{thm:fixedpointlocus1}
Let $\bm\alpha$ be any generic parabolic type and
let $n\geq 3$ be prime. For every $\gamma\in\Gn\setminus\{e\}$, the map \eqref{eq;pushforward} induces an isomorphism 
\[\cM^\gamma\simeq\big(\Nm_\pi^{-1}(\Lambda)\times\C^{(n-1)(g-1)}\times S_n^{|D|}\big)/\Z_n,\]
with $\Z_n$ acting diagonally, by pullback on $T^*\Nm_\pi^{-1}(\Lambda)$ and cyclically on each factor of $S_n^{|D|}$.
If $n=2$, replace $\Nm_\pi^{-1}(\Lambda)$ by $\Nm_\pi^{-1}(\Lambda L_\gamma)$.
\end{theorem}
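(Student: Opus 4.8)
The plan is to show that the push-forward map $\pi_*$ of \eqref{eq;pushforward} takes values in $\cM^\gamma$, is invariant under the diagonal Galois $\Z_n$-action, and descends to an isomorphism onto $\cM^\gamma$.

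First I would check that $\pi_*$ indeed lands in $\cM^\gamma$. Given $(F,\phi)$ in the domain, Proposition~\ref{prop:pushforward-semistable} shows that $(\pi_*F,\pi_*\phi)$ is a semistable strongly parabolic Higgs bundle of type $\bm\a$; since $\bm\a$ is generic it is in fact stable and hence defines a point of the moduli space. Restricting to $\Nm_\pi^{-1}(\Lambda)$ (respectively $\Nm_\pi^{-1}(\Lambda L_\gamma)$ when $n=2$) guarantees $\det(\pi_*F)\cong\Lambda$ and $\tr(\pi_*\phi)=0$, using $\det(\pi_*F)\cong\Nm_\pi(F)\otimes L_\gamma^{-n(n-1)/2}$ together with $L_\gamma^{-n(n-1)/2}\cong\cO_X$ for $n$ an odd prime. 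That the image is fixed by $\gamma$ follows from the projection formula and the canonical trivialization $\pi^*L_\gamma\cong\cO_{X_\gamma}$ recorded after \eqref{eq:unram n-cover}, which give $\pi_*F\otimes L_\gamma\cong\pi_*(F\otimes\pi^*L_\gamma)\cong\pi_*F$ compatibly with the Higgs field.

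The core of the argument is surjectivity, the parabolic Higgs analogue of the Narasimhan--Ramanan descent. Given $(V,\vp)\in\cM^\gamma$, stability yields an isomorphism $\psi\colon V\to V\otimes L_\gamma$ intertwining $\vp$ with $\vp\otimes\Id_{L_\gamma}$, unique up to a scalar. Iterating and using $L_\gamma^n\cong\cO_X$ gives $\psi^n\in\Aut(V,\vp)=\C^*$, so after rescaling we may assume $\psi^n=\Id$. This makes $V$ a module over $\pi_*\cO_{X_\gamma}\cong\bigoplus_{i=0}^{n-1}L_\gamma^{-i}$, equivalently $V\cong\pi_*F$ for a line bundle $F$ on $X_\gamma$, and the intertwining exhibits $\vp=\pi_*\phi$; strong parabolicity then forces $\phi\in H^0(X_\gamma,K_\gamma)$. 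At each $p\in D$ the flag of $V_p$ together with its weights distributes $\a_1(p),\dots,\a_n(p)$ among the eigenlines $F_{q_i}$, $q_i\in\pi^{-1}(p)$, which is precisely the datum of a permutation $\varpi_n(p)\in S_n$; this recovers $\varpi_n\in S_n^{|D|}$ and the parabolic type $\bm\a_\gamma(\varpi_n)$ on $F$. Here primality of $n$ is essential: it forces $L_\gamma$ to have order exactly $n$, so that $X_\gamma$ is connected and $\psi$ has $n$ distinct eigenvalues.

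Finally I would analyse the fibers of $\pi_*$ and upgrade the resulting bijection to an isomorphism. Since $\pi\circ\xi=\pi$, one has $\pi_*(\xi^*F)\cong\pi_*F$, and passing to $\xi^*F$ rotates the eigenline labelling, cyclically permuting each $\varpi_n(p)$; conversely the normalization $\psi^n=\Id$ determines $\psi$ only up to multiplication by $\xi$, hence $F$ only up to $\xi^*$, so the fiber is exactly one $\Z_n$-orbit. Again primality makes this $\Z_n$-action free on the relevant locus: a fixed point $\xi^*F\cong F$ would give $F\cong\pi^*N$ by Proposition~\ref{prop:kernel-image of pi*}, whence $\pi_*F\cong N\otimes\pi_*\cO_{X_\gamma}$ would be decomposable and unstable. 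Thus $\pi_*$ descends to a bijective morphism from the free (hence smooth) quotient onto $\cM^\gamma$, which is itself smooth as the fixed locus of a finite group acting on the smooth variety $\cM$; a bijective morphism of smooth complex varieties is an isomorphism. The case $n=2$ is identical up to the determinant twist already incorporated in the statement. I expect the \emph{surjectivity/descent step} to be the main obstacle, principally the bookkeeping needed to descend the strongly parabolic structure to $X_\gamma$ and to match the fiberwise flag data with the permutation $\varpi_n$.
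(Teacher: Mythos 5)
Your proposal is correct and follows essentially the same route as the paper: the Narasimhan--Ramanan descent argument identifies $V\cong\pi_*F$ with the intertwiner $\psi=\pi_*(\Id_F\otimes\lambda)$, the distinctness of the eigenvalues $\lambda(q_i)$ over each $p\in D$ forces the flag to be a sum of eigenlines and hence encodes a permutation $\varpi_n(p)$, and the only redundancy is the diagonal Galois $\Z_n$-action. The extra details you supply (normalizing $\psi^n=\Id$, the module structure over $\pi_*\cO_{X_\gamma}$, and upgrading the bijection to an isomorphism of varieties) are standard expansions of the steps the paper delegates to \cite{narasimhan-ramanan:1975} and \cite{thaddeus:2002}.
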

\proof
Let $\gamma\in\Gn\setminus\{e\}$ and $(V,\vp)$ represent a point in $\cM$. Since $(V,\vp)$ is stable then its only parabolic endomorphisms are the scalars \cite[(3.3)]{thaddeus:2002}, hence the same argument as in Proposition 2.6 of \cite{narasimhan-ramanan:1975} shows that $V\cong V\otimes L_\gamma$ if and only if isomorphic to the push-forward of a line bundle $F$ over $X_\gamma$,
\[V\cong\pi_*F.\] Moreover, the isomorphism $V\cong V\otimes L_\gamma$ is given by 
\begin{equation}\label{eq:isomfixed}
\pi_*(\Id_F\otimes \lambda):\pi_*F\xrightarrow{\cong}\pi_*F\otimes L_\gamma
\end{equation}
where we recall that $\lambda:\cO_{X_\gamma}\to\pi^*L_\gamma$ denotes the tautological section.
From here one sees that the Higgs fields on $V\cong\pi_*F$ which are compatible under the isomorphism \eqref{eq:isomfixed} are the ones which are push-forward of Higgs fields on $F$.

Consider now the parabolic structure on $V\cong\pi_*F$, 
\begin{equation}\label{eq:parabstru-fixed1}
V_p=V_{p,1} \supsetneq V_{p,2} \supsetneq\cdots \supsetneq V_{p,n}\supsetneq \{0\},\qquad 0\leq \a_1(p)< \cdots <\a_n(p) < 1
\end{equation}
over each $p\in D$, so that the one on $V\otimes L_\gamma$ is
\begin{equation}\label{eq:parabstru-fixed2}
V_p\otimes L_\gamma=V_{p,1}\otimes L_{\gamma,p} \supsetneq V_{p,2}\otimes L_{\gamma,p} \supsetneq \cdots \supsetneq V_{p,n}\otimes L_{\gamma,p}\supsetneq \{0\},\qquad 0\leq \a_1(p)< \cdots <\a_n(p) < 1.
\end{equation}
 For $p\in D$, let $\pi^{-1}(p)=\{q_1,\ldots,q_n\}$. Then the isomorphism \eqref{eq:isomfixed} over $p$ is 
\begin{equation}\label{eq:isom-fixed-point-divisor}
V_p=(\pi_*F)_p=\bigoplus_{i=1}^nF_{q_i}\xrightarrow{\bigoplus_{i=1}^n(\Id_{F_{q_i}}\otimes \lambda_{q_i})}\bigoplus_{i=1}^nF_{q_i}\otimes L_{\gamma,p}=V_p\otimes L_{\gamma,p}.
\end{equation} Since $q_i\neq q_j$ then also $\lambda(q_i)\neq \lambda(q_j)$ for $i\neq j$. Hence the only non-trivial subspaces $0\neq V'_p$ of $V_p$ which are preserved under \eqref{eq:isom-fixed-point-divisor} those of the form 
\begin{equation}\label{eq:subspace-filtr-fixed}
V'_p=\bigoplus_{j=1}^{\dim(V'_p)}F_{q_{i_j}}.
\end{equation}
So the isomorphism \eqref{eq:isomfixed} respects the filtrations \eqref{eq:parabstru-fixed1} and \eqref{eq:parabstru-fixed2} if and only if each $V_{p,i}$ is of the form \eqref{eq:subspace-filtr-fixed}.

So, after providing an element $\varpi_n\in S_n^{|D|}$, this parabolic structure of $V$ determines a parabolic structure on $F$ over $D_\gamma=\pi^{-1}(D)$, by reversing the construction carried in \eqref{eq:filt-perm1} and \eqref{eq:filt-perm2}.

The conclusion is that $(V,\vp)\in\cM^\gamma$, i.e., $(V,\vp)\cong(V\otimes L_\gamma,\vp\otimes\Id_{L_\gamma})$ if and only if $(V,\vp)\cong\pi_*((F,\phi),\varpi_n)$ for some $((F,\phi),\varpi_n)\in \Nm^{-1}(\Lambda)\times\C^{(n-1)(g-1)}\times S_n^{|D|}$ (with the obvious modification if $n=2$).

It turns out that there are redundancies coming precisely from the action of the Galois group $\Z_n$ of  $\pi:X_\gamma\to X$. 
Then clearly, for any $j=0,\ldots, n-1$, we have $\pi_*(F,\phi)\cong\pi_*(\xi^{j\ast}F,\xi^{j\ast}\phi)$ as (non-parabolic) Higgs bundles and these are the only redundancies. 
Now we take into account the parabolic structure. A parabolic Higgs bundle is defined by $((F,\phi),\varpi_n)\in \Nm_\pi^{-1}(\Lambda)\times\C^{(n-1)(g-1)}\times S_n^{|D|}$. Consider the action of the Galois group $\Z_n$ given by
\begin{equation}\label{eq:action of Galois group1}
\xi^j\cdot((F,\phi),\varpi_n)=((\xi^{j\ast}F,\xi^{j\ast}\phi),\xi^{-j}\cdot\varpi_n)
\end{equation}
where each $\xi^j\in\Z_n$ acts diagonally on $\varpi_n$ such that on each factor $\varpi_n(p)$ it acts as a cyclic permutation of length $i$ (hence acts freely). Precisely, $\xi^j$ acts diagonally on $\varpi_n$ as
\begin{equation}\label{eq:action of Galois group2}
\xi^{-j}\cdot \varpi_n(p)=a_{n-j+1}(p)\ \dots\ a_n(p)\ a_1(p)\ \dots\ a_{n-j}(p),
\end{equation} for each $p\in D$. Thus the orbit of $\varpi_n$ is given by the set of all the $n^{|D|}$ permutations which differ from the given one at each point by a cyclic permutation.

It is easy to check, by following again the construction  in \eqref{eq:filt-perm1} and \eqref{eq:filt-perm2}, that two elements of $\Nm_\pi^{-1}(\Lambda)\times\C^{(n-1)(g-1)}\times S_n^{|D|}$ give rise to isomorphic parabolic Higgs bundles if and only if they are in the same orbit under $\Z_n$. In other words, $(\pi_*F,\pi_*\phi,\varpi_n)$ and $(\pi_*\xi^{j\ast}F,\pi_*\xi^{j\ast}\phi,\xi^{-j}\cdot\varpi_n)$ determine isomorphic parabolic Higgs bundles, for each $j=0,\ldots,n-1$, and that is the only way one can obtain isomorphic parabolic Higgs bundles under our construction.
We conclude that 
\[\cM^\gamma\simeq\big(\Nm_\pi^{-1}(\Lambda)\times\C^{(n-1)(g-1)}\times S_n^{|D|}\big)/\Z_n\]
with $\Z_n$ acting diagonally as in \eqref{eq:action of Galois group1} and \eqref{eq:action of Galois group2}. 
\endproof

\begin{remark}
  As mentioned in Remark~\ref{rem:alpha-independent-PGLn}, our
  description of $\mathcal{M}^\gamma$ implies that the corresponding
  parabolic Higgs bundles are $\bm\alpha$-semistable for any value of
  $\bm\alpha$.
\end{remark}

In particular, it follows from this theorem that, for any non-trivial $\gamma\in\Gn$,
\begin{equation}\label{eq:dimfixedlocus}
\dim(\cM^\gamma)=2(n-1)(g-1).
\end{equation}

It turns out that the parabolic structure will now make our life
easier by allowing a slightly different description of the fixed point
locus $\cM^\gamma$, from which the calculation of the $\Gn$-invariant
$E$-polynomial $E(\cM^\gamma)^{\Gn}$ is simpler than in the
non-parabolic case.

First, choose a section 
\begin{equation}\label{eq:section s}
s:S_n^{|D|}/\Z_n\to S_n^{|D|}
\end{equation} of the projection $S_n^{|D|}\to S_n^{|D|}/\Z_n$. Of course it corresponds to the choice of a representative of each class in $S_n^{|D|}/\Z_n$. There is no canonical choice of such $s$, but all of them are obviously algebraic. 

Recall that $\Gn$ acts on $\cM^\gamma$, by acting trivially on the Higgs field and on the weights and by pullback and tensor product on the factor $\Nm_\pi^{-1}(\Lambda)$ of $\cM^\gamma$; cf. \eqref{eq:actGnkerNm}. 

\begin{proposition}\label{prop:fixedpointlocus2}
There is a $\Gn$-equivariant isomorphism (depending on the choice of the section $s$ in \eqref{eq:section s})
\[\cM^\gamma\simeq \Nm_\pi^{-1}(\Lambda)\times\C^{(n-1)(g-1)}\times (S_n^{|D|}/\Z_n),\]
where $\Gn$ acts on the first factor as stated in
\eqref{eq:actGnkerNm} and trivially on the other two factors.
\end{proposition}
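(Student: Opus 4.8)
The plan is to derive this sharper product decomposition directly from Theorem~\ref{thm:fixedpointlocus1}, using the section $s$ of \eqref{eq:section s} to trivialize the $\Z_n$-quotient along the finite factor $S_n^{|D|}$. The elementary observation that makes everything run is that the diagonal $\Z_n$-action on $\Nm_\pi^{-1}(\Lambda)\times\C^{(n-1)(g-1)}\times S_n^{|D|}$ is \emph{free}: by \eqref{eq:action of Galois group2} a nontrivial $\xi^j$ acts on each factor $\varpi_n(p)$ by a nontrivial cyclic shift of the word $a_1(p)\cdots a_n(p)$, and since its entries are pairwise distinct no such shift can fix it. Hence $\Z_n$ already acts freely on the finite set $S_n^{|D|}$ alone, so each orbit of $\Z_n$ in the product meets the finite factor in exactly $n$ distinct points.

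First I would partition $S_n^{|D|}$ into its $\Z_n$-orbits, writing $\varpi_O=s(O)$ for the chosen representative of the orbit $O\in S_n^{|D|}/\Z_n$. Since $S_n^{|D|}$ is a finite discrete set, this decomposes the quotient of Theorem~\ref{thm:fixedpointlocus1} as the disjoint union over $O$ of the blocks $\bigl(\Nm_\pi^{-1}(\Lambda)\times\C^{(n-1)(g-1)}\times O\bigr)/\Z_n$. For each fixed $O$, the slice $\Nm_\pi^{-1}(\Lambda)\times\C^{(n-1)(g-1)}\times\{\varpi_O\}$ is a fundamental domain for the $\Z_n$-action on that block: given any point whose $S_n^{|D|}$-coordinate is $\varpi\in O$, there is a unique $\xi^j$ with $\xi^{-j}\cdot\varpi=\varpi_O$ by freeness, and applying it via \eqref{eq:action of Galois group1} carries the point into the slice. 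Thus the inclusion of the slice induces an isomorphism of varieties $\Nm_\pi^{-1}(\Lambda)\times\C^{(n-1)(g-1)}\xrightarrow{\ \cong\ }\bigl(\Nm_\pi^{-1}(\Lambda)\times\C^{(n-1)(g-1)}\times O\bigr)/\Z_n$ (a free quotient by a finite group). Assembling these over all $O\in S_n^{|D|}/\Z_n$ yields the stated isomorphism, which depends on $s$ only through the choice of representatives $\varpi_O$, with no constructibility issue since $s$ is merely a choice of representative in a finite set.

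It remains to check $\Gn$-equivariance, and this is where the parabolic structure does the real work. By \eqref{eq:actGnkerNm} the action of $\delta\in\Gn$ is $M\mapsto M\otimes\pi^*L_\delta$ on the factor $\Nm_\pi^{-1}(\Lambda)$; since tensoring by $L_\delta$ multiplies the Higgs field by the identity and leaves the flags, hence the weights and the permutation data $\varpi_n$, untouched, it acts trivially on the $\C^{(n-1)(g-1)}$ and $S_n^{|D|}$ factors. The key compatibility is that this action commutes with the Galois $\Z_n$-action: because $\pi\circ\xi=\pi$ we have $\xi^*\pi^*L_\delta\cong\pi^*L_\delta$, so $\xi^*(M\otimes\pi^*L_\delta)\cong(\xi^*M)\otimes\pi^*L_\delta$, while on the Higgs-field and $S_n^{|D|}$ coordinates the two actions trivially commute. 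Consequently $\Gn$ preserves each block and fixes the $S_n^{|D|}$-coordinate, hence preserves the slice $\{\varpi_O\}$; the isomorphism above therefore intertwines it with the action of $\Gn$ on $\Nm_\pi^{-1}(\Lambda)$ given by \eqref{eq:actGnkerNm} and the trivial action on the other two factors.

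The only genuine content beyond Theorem~\ref{thm:fixedpointlocus1} is this last commutation $\xi^*\pi^*L_\delta\cong\pi^*L_\delta$, so I expect that verification — together with confirming that $\Gn$ truly acts trivially on the Higgs-field slice $\C^{(n-1)(g-1)}$ — to be the main (though mild) obstacle; the rest is standard fundamental-domain bookkeeping for a free finite quotient taken along a discrete factor.
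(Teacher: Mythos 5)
Your proposal is correct and follows essentially the same route as the paper: the paper writes down the explicit map $f_s(F,\phi,[\varpi_n])=[(F,\phi,s([\varpi_n]))]$ and inverts it using the unique $\xi^j$ carrying $\varpi_n$ to $s([\varpi_n])$ (freeness of $\Z_n$ on $S_n^{|D|}$), which is exactly your fundamental-domain argument phrased as a pair of explicit inverse maps, and the equivariance check likewise reduces to $\xi^*\pi^*L_\delta\cong\pi^*L_\delta$. Your justification of freeness (a nontrivial cyclic shift cannot fix a word with pairwise distinct letters) fills in a detail the paper only asserts parenthetically.
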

\proof
From Theorem \ref{thm:fixedpointlocus1}, we know that $\cM^\gamma\simeq\big(\Nm_\pi^{-1}(\Lambda)\times\C^{(n-1)(g-1)}\times S_n^{|D|}\big)/\Z_n$.
Consider the map 
\[f_s:\Nm_\pi^{-1}(\Lambda)\times\C^{(n-1)(g-1)}\times (S_n^{|D|}/\Z_n)\to\big(\Nm_\pi^{-1}(\Lambda)\times\C^{(n-1)(g-1)}\times S_n^{|D|}\big)/\Z_n\] defined by 
\[f_s(F,\phi,[\varpi_n])=[(F,\phi,s([\varpi_n]))].\] 
 Its inverse $g_s$ is defined as follows. Take $[(F,\phi,\varpi_n)]\in (\Nm_\pi^{-1}(\Lambda)\times\C^{(n-1)(g-1)}\times S_n^{|D|})/\Z_n$. Since $\Z_n$ acts freely on $S_n^{|D|}$, there is a unique $i$ such that $\xi^i\cdot\varpi_n=s([\varpi_n])$. Then $[(F,\phi,\varpi_n)]=[(\xi^i\cdot F,\xi^i\cdot\phi,\xi^i\cdot\varpi_n)]$, and hence take 
\[g_s([(F,\phi,\varpi_n)])=(\xi^i\cdot F,\xi^i\cdot\phi,[\varpi_n]).\] It is clear that indeed $g_s=f_s^{-1}$.
It is clear that both $f_s$ and its inverse are algebraic, yielding the stated isomorphism.

To see that it is $\Gn$-equivariant, is just a matter of noticing that, for each $\delta\in\Gn$,
\[\delta\cdot f_s(F,\phi,[\varpi_n])=[F\otimes\pi^*L_\delta,\phi,\varpi_n]=f_s(\delta\cdot(F,\phi,[\varpi_n]))\]
because $\Z_n$ acts trivially on $\pi^*L_\delta$.
\endproof

The action of the Galois group $\Z_n$ on the product $\Nm_\pi^{-1}(\Lambda)\times\C^{(n-1)(g-1)}\times S_n^{|D|}$ can therefore be absorbed in the $S_n^{|D|}$ factor.

\subsection{Calculation of the polynomial}

\begin{proposition}\label{prop:Gamma-invpart}
For any non-trivial $\gamma\in\Gn$, we have the following isomorphism regarding the $\Gn$-invariant part of $H_c^*(\cM^\gamma,\C)$:
\[H_c^*(\cM^\gamma,\C)^{\Gn}\cong H^*(\Prym_\pi(X_\gamma),\C)\otimes H_c^*(\C^{(n-1)(g-1)},\C)\otimes  H^*(S_n^{|D|}/\Z_n,\C).\]
\end{proposition}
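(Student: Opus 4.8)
The plan is to combine the $\Gamma_n$-equivariant product decomposition of Proposition~\ref{prop:fixedpointlocus2} with the Künneth theorem and then analyse the only nontrivial factor. First I would apply the Künneth isomorphism for compactly supported cohomology to
\[
\cM^\gamma\simeq \Nm_\pi^{-1}(\Lambda)\times\C^{(n-1)(g-1)}\times (S_n^{|D|}/\Z_n),
\]
which is natural and hence $\Gn$-equivariant. Since $\Gn$ is finite and we work over $\C$, passing to $\Gn$-invariants is exact and commutes with the tensor product; as $\Gn$ acts trivially on the last two factors (Proposition~\ref{prop:fixedpointlocus2}), this reduces the claim to the single isomorphism
\[
H_c^*(\Nm_\pi^{-1}(\Lambda),\C)^{\Gn}\cong H^*(\Prym_\pi(X_\gamma),\C).
\]

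Next I would reduce to the cohomology of one connected component. By Corollary~\ref{cor:connectedcompKerNm} and Proposition~\ref{prop:connectedcompKerNm-deg=d}, $\Nm_\pi^{-1}(\Lambda)$ is a disjoint union of $n$ torsors for the abelian variety $\Prym_\pi(X_\gamma)$, hence compact, so $H_c^*=H^*$. By Proposition~\ref{prop:Gamma-action on components of Nm^-1} the group $\Gn$ permutes these $n$ components transitively. Writing $P_0$ for one component and $G'\subset\Gn$ for its stabilizer, the graded $\Gn$-module $H^*(\Nm_\pi^{-1}(\Lambda),\C)=\bigoplus_i H^*(P_i,\C)$ is the induced module $\mathrm{Ind}_{G'}^{\Gn}H^*(P_0,\C)$, and Frobenius reciprocity gives, in each degree,
\[
H^*(\Nm_\pi^{-1}(\Lambda),\C)^{\Gn}\cong H^*(P_0,\C)^{G'}.
\]

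Finally I would compute $H^*(P_0,\C)^{G'}$. Here $G'$ is exactly the kernel of the $\Gn$-action on the set of components (an index-$n$ subgroup of order $n^{2g-1}$), so $G'$ consists precisely of those $\delta$ with $\pi^*L_\delta\in\Prym_\pi(X_\gamma)=\ker(\Nm_\pi)_0$; such $\delta$ act on the torsor $P_0$ by translation by the element $\pi^*L_\delta\in\Prym_\pi(X_\gamma)$ (cf.\ \eqref{eq:actGnkerNm}). Since translations on an abelian variety, or on a torsor over it, are homotopic to the identity through the path of translations, they act trivially on cohomology. Hence $H^*(P_0,\C)^{G'}=H^*(P_0,\C)\cong H^*(\Prym_\pi(X_\gamma),\C)$ (an abelian variety of dimension $(n-1)(g-1)$ by \eqref{eq:dimPrym}), and assembling the three steps yields the asserted isomorphism.

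The routine ingredients are the Künneth formula and the exactness of finite-group invariants in characteristic zero; the conceptual heart—and the place where the parabolic setting simplifies matters compared to \cite{hausel-thaddeus:2003}—is the last step: the otherwise intricate $\Gn$-action descends, on the cohomology of a single Prym-torsor, to a mere translation and therefore acts trivially. The one point requiring care is to identify the stabilizer $G'$ correctly and to check that its elements act by translations \emph{within} $P_0$ (i.e.\ by elements of $\Prym_\pi(X_\gamma)$) rather than by translations moving between components; this is exactly what Proposition~\ref{prop:Gamma-action on components of Nm^-1}, together with the freeness statement preceding \eqref{eq:actGnkerNm}, controls.
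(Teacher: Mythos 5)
Your proof is correct and follows essentially the same route as the paper: reduce via the $\Gn$-equivariant product decomposition of Proposition~\ref{prop:fixedpointlocus2} to the norm fibre, use the transitive permutation of its $n$ components to pass to the stabilizer of one component, and observe that this stabilizer acts by translations by elements of $\Prym_\pi(X_\gamma)$, hence trivially on cohomology. The only (cosmetic) difference is that you package the component-permutation step as an induced module plus Frobenius reciprocity, where the paper writes out the invariance condition on tuples $(\omega_1,\dots,\omega_n)$ by hand.
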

\proof
By Proposition \ref{prop:fixedpointlocus2}, we can consider the $\Gn$-action on $\Nm_\pi^{-1}(\Lambda)\times\C^{(n-1)(g-1)}\times (S_n^{|D|}/\Z_n)$, where $\Gn$ acts trivially on the second and third factors, hence the corresponding cohomologies are $\Gn$-invariant.
It then suffices to prove that $H^*(\Nm_\pi^{-1}(\Lambda),\C)^{\Gn}\cong H^*(\Prym_\pi(X_\gamma),\C)$.

Consider the symplectic basis of $\Gn$ given by \eqref{eq:basis}. By Proposition \ref{prop:Gamma-action on components of Nm^-1}, the subgroup generated by $\delta_0$ acts freely and transitively on $\pi_0(\Nm_\pi^{-1}(\Lambda))$, while any $\delta\notin\langle\delta_0\rangle$ acts trivially on these components.
Write the decomposition of $\Nm_\pi^{-1}(\Lambda)$ into connected components as 
\begin{equation}\label{eq:decomp of N}
\Nm_\pi^{-1}(\Lambda)=N_1\sqcup\cdots\sqcup N_n,
\end{equation}
where the indices are chosen so that if $x\in N_i$ then $\delta_0(x)\in N_{i+1}$ (where we assume $n+1=1$).
Each $N_i$ is a torsor for the Prym variety of $X_\gamma$, hence their cohomologies are the same.

Take a cohomology class in $H^k(\Nm_\pi^{-1}(\Lambda),\C)$  represented by a $k$-form $\omega\in\mathcal{A}^*(\Nm_\pi^{-1}(\Lambda),\C)$.
Write 
\begin{equation}\label{eq:decomp of w}
\omega=(\omega_1,\ldots,\omega_i,\ldots,\omega_n)
\end{equation}
according to \eqref{eq:decomp of N}, where each $\omega_i$ represents a cohomology class in $H^k(N_i,\C)$.
The action of $\delta\in\Gn$ on the cohomology class represented by $\omega$ is given by pullback
\begin{equation}\label{eq:Gn-action on forms with twisted coeff}
\delta\cdot\omega=\delta^*\omega.
\end{equation}
So the decomposition of $\delta_0\cdot\omega$ in \eqref{eq:decomp of N} is given by 
\begin{equation}\label{eq:decomp of delta0 w}
\delta_0\cdot\omega=(\delta_0^*\omega_2,\ldots,\delta_0^*\omega_{i+1},\ldots,\delta_0^*\omega_1).
\end{equation}
By \eqref{eq:decomp of w} and \eqref{eq:decomp of delta0 w} we see that the class represented by $\omega$ is invariant by the subgroup $\langle\delta_0\rangle$ if and only if the forms $\omega_i$ are such that $\omega_i=\delta_0^*\omega_{i+1}$, that is, if and only if 
$\omega$ is given by 
\begin{equation}\label{eq:w invariant}
\omega=((\delta_0^{n-1})^*\omega_n,\ldots,(\delta_0^{n-i})^*\omega_n,\ldots,\omega_n).
\end{equation}
Notice that this makes sense because $\delta_0$ has order $n$.

Consider now an element $\delta\in\Gn$ which is not in the subgroup generated by $\delta_0$. Then $\delta$ preserves the connected components $N_i$ of $\Nm_\pi^{-1}(\Lambda)$, acting hence on each $H^*(N_i,\C)$. Each $N_i$ is a torsor for the Prym of $X_\gamma$ and $\delta$ acts on $N_i$  by translations by an element of the Prym:
\[\delta\cdot M=M\otimes\pi^*L_\delta.\] 
To see that indeed $\pi^*L_\delta\in\Prym_\pi(X_\gamma)$, note first that it is in the kernel of $\Nm_\pi$. Hence it is of the form $\pi^*L_\delta=F^{-1}\otimes\xi^*F$, with $\xi^{\deg(F)}=\langle\delta,\gamma\rangle$, by Proposition \ref{prop:Weilpairing-degree}. But $\langle\delta,\gamma\rangle=1$ i.e. $\deg(F)=0$ and $\pi^*L_\delta\in\Prym_\pi(X_\gamma)$ by Proposition \ref{prop:connectedcompKerNm-deg}. Since  $\Prym_\pi(X_\gamma)$ is an abelian variety, every class in $H^*(\Prym_\pi(X_\gamma),\C)$ contains a unique representative which is invariant under translations. This property goes through  $H^*(N_n,\C)\cong H^*(\Prym_\pi(X_\gamma),\C)$ considering the torsor structure of $N_n$. This means that we can assume that the form $\omega_n$ in \eqref{eq:w invariant} is invariant under translations, so is $\delta$-invariant, i.e., $\delta^*\omega_n=\omega_n$.
Hence the action \eqref{eq:Gn-action on forms with twisted coeff} of $\delta$ on $\omega$ given by \eqref{eq:w invariant} is 
\[\delta\cdot\omega=((\delta_0^{n-1})^*\omega_n,\ldots,(\delta_0^{n-i})^*\omega_n,\ldots,\omega_n).\]  

We thus conclude that $H^*(\Nm_\pi^{-1}(\Lambda),\C)^{\Gn}$ is given precisely by the classes represented by the forms of type \eqref{eq:w invariant}. Mapping those to $[\omega_n]$ gives an isomorphism with $H^*(N_n,\C)$, hence also with $H^*(\Prym_\pi(X_\gamma),\C)$.
\endproof

Now we can finally compute the sum of the stringy $E$-polynomial of $\cM/\Gn$ corresponding to non-trivial elements of $\Gn$. 

\begin{proposition}\label{prop:stringyE-poly}
For any $n$ prime, the following holds:
\[\sum_{\gamma\neq e}E(\cM^\gamma)^{\Gn}(uv)^{F(\gamma)}=\frac{n^{2g}-1}{n}(n!)^{|D|}(uv)^{(n^2-1)(g-1)+|D|n(n-1)/2}((1-u)(1-v))^{(n-1)(g-1)}.\]
\end{proposition}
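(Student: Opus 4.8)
The plan is to reduce the sum to a single term and then evaluate that term using the tensor decomposition of Proposition~\ref{prop:Gamma-invpart}. First I would observe that the summand $E(\cM^\gamma)^{\Gn}(uv)^{F(\gamma)}$ is the same for every non-trivial $\gamma\in\Gn$. Indeed, by \eqref{eq:fermionic=rkN/2} together with \eqref{eq:dim} and \eqref{eq:dimfixedlocus}, the fermionic shift is
\[
F(\gamma)=\frac{\dim(\cM)-\dim(\cM^\gamma)}{2}=n(n-1)(g-1)+\frac{|D|n(n-1)}{2},
\]
which is manifestly independent of $\gamma$, and Proposition~\ref{prop:Gamma-invpart} exhibits $H_c^*(\cM^\gamma,\C)^{\Gn}$ as depending on $\gamma$ only through $\Prym_\pi(X_\gamma)$, an abelian variety of the fixed dimension $(n-1)(g-1)$. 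Since $\Gn\cong\Z_n^{2g}$ has exactly $n^{2g}-1$ non-trivial elements, the left-hand side equals $(n^{2g}-1)$ times this common term.

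Next I would compute the common term factor by factor, using that the $E$-polynomial is multiplicative across the tensor product of Proposition~\ref{prop:Gamma-invpart}, the decomposition being one of mixed Hodge structures. The three factors contribute as follows: $\Prym_\pi(X_\gamma)$ is a compact abelian variety of dimension $(n-1)(g-1)$, so its total cohomology is the exterior algebra on $H^1$ and its $E$-polynomial is $((1-u)(1-v))^{(n-1)(g-1)}$; the compactly supported cohomology of $\C^{(n-1)(g-1)}$ is one-dimensional, of Hodge type $\bigl((n-1)(g-1),(n-1)(g-1)\bigr)$, contributing $(uv)^{(n-1)(g-1)}$; and $S_n^{|D|}/\Z_n$ is a finite set, so its cohomology lies in degree zero with trivial Hodge structure and its $E$-polynomial is its cardinality. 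Here I would use that the $\Z_n$-action by simultaneous cyclic rotation is free (a nontrivial cyclic shift never fixes a word of distinct letters), whence $|S_n^{|D|}/\Z_n|=(n!)^{|D|}/n$. Multiplying the three factors gives
\[
E(\cM^\gamma)^{\Gn}=\frac{(n!)^{|D|}}{n}\,(uv)^{(n-1)(g-1)}\,\bigl((1-u)(1-v)\bigr)^{(n-1)(g-1)}.
\]

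Finally I would assemble everything. The total is $(n^{2g}-1)$ times the product of $E(\cM^\gamma)^{\Gn}$ with $(uv)^{F(\gamma)}$. The powers of $uv$ combine as
\[
(n-1)(g-1)+F(\gamma)=(n-1)(g-1)(1+n)+\frac{|D|n(n-1)}{2}=(n^2-1)(g-1)+\frac{|D|n(n-1)}{2},
\]
which is exactly the exponent in \eqref{eq:polynom}, while the prefactor becomes $\tfrac{n^{2g}-1}{n}(n!)^{|D|}$; this yields the claimed formula. I do not expect a serious obstacle, since the genuine geometric content is already packaged in Proposition~\ref{prop:Gamma-invpart} and in the dimension counts \eqref{eq:dim} and \eqref{eq:dimfixedlocus}. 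The only points demanding care are the counting of free $\Z_n$-orbits on $S_n^{|D|}$ and the bookkeeping of the $uv$-exponents; the latter is where an off-by-one in the fermionic shift would surface, so I would double-check $F(\gamma)$ against the identity $(n^2-1)-(n-1)=n(n-1)$.
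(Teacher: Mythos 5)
Your proposal is correct and follows essentially the same route as the paper: both reduce to a single $\gamma$-independent term via Proposition~\ref{prop:Gamma-invpart}, evaluate the three tensor factors (Prym, affine space, finite quotient $S_n^{|D|}/\Z_n$ of cardinality $(n!)^{|D|}/n$), and compute $F(\gamma)=n(n-1)(g-1+|D|/2)$ from \eqref{eq:fermionic=rkN/2}, \eqref{eq:dim} and \eqref{eq:dimfixedlocus}. The exponent bookkeeping and the freeness of the cyclic action on words of distinct letters both check out.
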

\proof
By Proposition \ref{prop:Gamma-invpart}, and since $E(\C^{(n-1)(g-1)})=(uv)^{(n-1)(g-1)}$,
\[E(\cM^\gamma)^{\Gn}=(uv)^{(n-1)(g-1)}E(\Prym_\pi(X_\gamma))E(S_n^{|D|}/\Z_n),\]
 for each $\gamma\in\Gn\setminus\{e\}$. 
 
 The polynomial $E(S_n^{|D|}/\Z_n)$ is just the constant $\frac{1}{n}(n!)^{|D|}$, i.e., the number of elements of the space $S_n^{|D|}/\Z_n$.
 
Being an abelian variety, the cohomology of the Prym of $X_\gamma$ is the alternating algebra on $H^1(\Prym_\pi(X_\gamma),\C)=H^{0,1}(\Prym_\pi(X_\gamma))\oplus H^{1,0}(\Prym_\pi(X_\gamma))$. Write $V=H^{0,1}(\Prym_\pi(X_\gamma))$ and note that $\dim(V)=\dim(\Prym_\pi(X_\gamma))=(n-1)(g-1)$.
Thus $H^k(\Prym_\pi(X_\gamma),\C)=\Lambda^k(V\oplus\bar V)$,
therefore 
\[H^{p,q}(\Prym_\pi(X_\gamma))=\Lambda^pV\otimes\Lambda^q\bar V,\]
whose dimension is $\binom{(n-1)(g-1)}{p}\binom{(n-1)(g-1)}{q}$.
Hence
\begin{equation*}
\begin{split}
E(\Prym_\pi(X_\gamma))&=\sum_{p,q=0}^{(n-1)(g-1)}(-1)^{p+q}\binom{(n-1)(g-1)}{p}\binom{(n-1)(g-1)}{q}u^pv^q\\
&=((1-u)(1-v))^{(n-1)(g-1)}.
\end{split}
\end{equation*}

We are now left to the computation of the fermionic shift $F(\gamma)$ as defined in \eqref{eq:fermionicshift}. From \eqref{eq:fermionic=rkN/2}, we know that $F(\gamma)=\dim(N_p\cM^\gamma)/2$, but from \eqref{eq:dim} and \eqref{eq:dimfixedlocus}, we conclude that
\begin{equation}\label{eq:comput of fermionic}
F(\gamma)=n(n-1)(g-1+|D|/2).
\end{equation}

Therefore, for each $\gamma\neq e$,
\[E(\cM^\gamma)^{\Gn}(uv)^{F(\gamma)}=\frac{1}{n}(n!)^{|D|}(uv)^{(n-1)(g-1)+n(n-1)(g-1+|D|/2)}((1-u)(1-v))^{(n-1)(g-1)}.\]
This is independent of $\gamma\in\Gn\setminus\{e\}$, thus summing up this expression for all non-trivial elements of $\Gn$, yields
\[
\sum_{\gamma\neq e}E(\cM^\gamma)^{\Gn}(uv)^{F(\gamma)}=\frac{n^{2g}-1}{n}(n!)^{|D|}(uv)^{(n^2-1)(g-1)+|D|n(n-1)/2}((1-u)(1-v))^{(n-1)(g-1)},
\]
as claimed.
\endproof

\vspace{1cm}

\noindent 
\textbf{Peter B. Gothen} \\
      Centro de Matemática da Universidade do Porto, CMUP\\
      Faculdade de Ciências, Universidade do Porto\\
      Rua do Campo Alegre 687, 4169-007 Porto, Portugal\\ 
      \url{www.fc.up.pt}\\
      email: pbgothen@fc.up.pt

\vspace{1cm}

\noindent
      \textbf{André G. Oliveira} \\
      Centro de Matemática da Universidade do Porto, CMUP\\
      Faculdade de Ciências, Universidade do Porto\\
      Rua do Campo Alegre 687, 4169-007 Porto, Portugal\\ 
      \url{www.fc.up.pt}\\
      email: andre.oliveira@fc.up.pt

\vspace{.2cm}
\noindent
\textit{On leave from:}\\
 Departamento de Matemática, Universidade de Trás-os-Montes e Alto Douro, UTAD \\
Quinta dos Prados, 5000-911 Vila Real, Portugal\\ 
\url{www.utad.pt}\\
email: agoliv@utad.pt

\end{document}